\newtheorem{lemma}{Lemma}
\newtheorem{Prop}{Proposition}
\newtheorem{theorem}{Theorem}
\newtheorem{definition}{Definition}
\newcommand{\R}{\mathbb{R}}
\newcommand{\indic}[1]{ \mathbf{1}_{#1}}
\newcommand{\N}{\mathbb{N}}
\newcommand{\M}{\mathbb{M}}
\newcommand{\E}{\mathbb{E}}
\newcommand{\V}{\mathbb{V}}
\newcommand{\W}{\mathbb{W}}
\newcommand{\Z}{\mathbb{Z}}
\newcommand{\eps}{\varepsilon}
\DeclareMathOperator{\reach}{reach}
\DeclareMathOperator{\wfs}{wfs}
\newcommand{\tda}{{\sc tda}}
  \newcommand{\bottle}{\operatorname{d_b}}
  \newcommand{\dgm}{\operatorname{dgm}}
\newcommand{\Filt}{\operatorname{\mathrm{Filt}}}
\newcommand{\X}{\mathbb{X}}  
\newcommand{\Y}{\mathbb{Y}}  
\newcommand{\hX}{\widehat{\mathbb{X}}}
\newcommand{\dhaus}{\operatorname{Haus}}
\newcommand{\Rips}{\operatorname{\mathrm{Rips}}}
\newcommand{\Cech}{\operatorname{\mathrm{Cech}}}
\newcommand{\lscape}{\lambda} 
\newcommand{\Xm}{X}
\newcommand{\Ym}{Y}
\definecolor{codegreen}{rgb}{0,0.6,0}
\definecolor{codegray}{rgb}{0.5,0.5,0.5}
\definecolor{codepurple}{rgb}{0.58,0,0.82}
\definecolor{backcolour}{rgb}{0.95,0.95,0.92}
\lstdefinestyle{mystyle}{
    backgroundcolor=\color{backcolour},   
    commentstyle=\color{codegreen},
    keywordstyle=\color{magenta},
    numberstyle=\tiny\color{codegray},
    stringstyle=\color{codepurple},
    basicstyle=\footnotesize,
    breakatwhitespace=false,         
    breaklines=true,                 
    captionpos=b,                    
    keepspaces=true,                 
    numbers=left,                    
    numbersep=5pt,                  
    showspaces=false,                
    showstringspaces=false,
    showtabs=false,                  
    tabsize=2
}
\title{An introduction to Topological Data Analysis: fundamental and practical aspects for data scientists}
\author{Fr\'ed\'eric Chazal and Bertrand Michel}
\date{\today}
\begin{document}

\maketitle

\begin{abstract}
Topological Data Analysis (\tda ) is a recent and fast growing field providing a set of new topological and geometric tools to infer relevant features for possibly complex data. This paper is a brief introduction, through a few selected topics, to basic fundamental and practical aspects of \tda\ for non experts. 
\end{abstract}


\section{Introduction and motivation}



Topological Data Analysis (\tda) is a recent field that emerged from various works in applied (algebraic) topology and computational geometry during the first decade of the century. Although one can trace back geometric approaches for data analysis quite far in the past, \tda\ really started as a field with the pioneering works of \cite{elz-tps-02} and \cite{zc-cph-05} in persistent homology and was popularized in a landmark paper in 2009 \cite{c-td-09}. 
\tda\ is mainly motivated by the idea that topology and geometry provide a powerful approach to infer robust qualitative, and sometimes quantitative, information about the structure of data - see, e.g. \cite{c-htda-16}.

\tda~aims at providing well-founded mathematical, statistical and algorithmic methods to infer, analyze and exploit the complex topological and geometric structures underlying data that are often represented as point clouds in Euclidean or more general metric spaces. During the last few years, a considerable effort has been made to provide robust and efficient data structures and algorithms for \tda\ that are now implemented and available and easy to use through standard libraries such as the Gudhi library \footnote{\url{https://gudhi.inria.fr/}} (C++ and Python) \cite{maria2014gudhi} and its R software interface \cite{fasy2014introduction}, 
Dionysus\footnote{\url{http://www.mrzv.org/software/dionysus/}}, PHAT\footnote{\url{https://bitbucket.org/phat-code/phat}}, DIPHA\footnote{\url{https://github.com/DIPHA/dipha}}, or Giotto \footnote{\url{https://giotto-ai.github.io/gtda-docs/0.4.0/library.html}}. Although it is still rapidly evolving, \tda\ now provides a set of mature and efficient tools that can be used in combination or complementary to other data sciences tools. 



\paragraph{The \tda pipeline.}
\tda\ has recently known developments in various directions and application fields. There now exist a large variety of methods inspired by topological and geometric approaches. Providing a complete overview of all these existing approaches is beyond the scope of this introductory survey. However, many standard ones rely on the following basic pipeline that will serve as the backbone of this paper:
\begin{enumerate}
\item The input is assumed to be a finite set of points coming with a notion of distance - or similarity - between them. This distance can be induced by the metric in the ambient space (e.g. the Euclidean metric when the data are embedded in $\R^d$) or come as an intrinsic metric defined by a pairwise distance matrix. The definition of the metric on the data is usually given as an input or guided by the application. It is however important to notice that the choice of the metric may be critical to reveal interesting topological and geometric features of the data.\\

\item A ``continuous'' shape is built on top of the data in order to highlight the underlying topology or geometry. This is often a simplicial complex or a nested family of simplicial complexes, called a filtration, that reflects the structure of the data at different scales. Simplicial complexes can be seen as higher dimensional generalizations of neighboring graphs that are classically built on top of data in many standard data analysis or learning algorithms. The challenge here is to define such structures that are proven to reflect relevant information about the structure of data and that can be effectively constructed and manipulated in practice. 

\item Topological or geometric information is extracted from the structures built on top of the data. 
This may either results in a full reconstruction, typically a triangulation, of the shape underlying the data from which topological/geometric features can be easily extracted or, in crude summaries or approximations from which the extraction of relevant information requires specific methods, such as e.g. persistent homology.  
Beyond the identification of interesting topological/geometric information and its visualization and interpretation, the challenge at this step is to show its relevance, in particular its stability with respect to perturbations or presence of noise in the input data. For that purpose, understanding the statistical behavior of the inferred features is also an important question.

\item The extracted topological and geometric information provides new families of features and descriptors of the data. They can be used to better understand the data - in particular through visualization- or they can be combined with other kinds of features for further analysis and machine learning tasks. These information can also be used to design well-suited data analysis and machine learning models. Showing the added-value and the complementarity (with respect to other features) of the information provided by \tda\ tools is an important question at this step. 
\end{enumerate}

\paragraph{\tda\ and statistics.}
Until quite recently,  the theoretical aspects of TDA and topological inference mostly relied on deterministic approaches. These deterministic approaches do not take into account the random nature of data and the intrinsic variability of the topological quantity they infer.  Consequently, most of the corresponding methods remain exploratory, without being able to efficiently distinguish between  information and what is sometimes called the "topological noise".

A statistical approach to TDA means that we consider data as generated from an unknown distribution, but also that the inferred topological features by TDA methods  are seen as estimators of topological quantities describing an underlying  object. Under this approach, the unknown object usually corresponds to the support of the data distribution (or part of it).
The main goals of a statistical approach to topological data analysis can be summarized as the following list of problems:
\begin{description}
\item [Topic 1:] proving consistency and studying the convergence rates of TDA methods.
\item[Topic 2:] providing confidence regions for topological features and  discussing the significance of the  estimated topological quantities.
\item[Topic 3:] selecting relevant scales at which the topological phenomenon should be considered, as a function of observed data.
\item[Topic 4:]  dealing with outliers and providing robust methods for TDA. 
\end{description}

\paragraph{Applications of \tda\ in data science.}
On the application side, many recent promising and successful results have demonstrated the interest of topological and geometric approaches in an increasing number of fields such has, e.g., 
material science \cite{kramar2013persistence,0957-4484-26-30-304001}
 3D shape analysis~\cite{socg-pbsds-10,turner2014persistent},
 image analysis~\cite{rieck2020uncovering, qaiser2019fast},
 multivariate time series analysis~\cite{seversky2016time, umeda2017time, khasawneh2016chatter},
medicine~\cite{dindin2020topological},
 biology~\cite{yao2009topological} ,
 genomic~\cite{carriere2020topological}
 chemistry~\cite{lee2017quantifying}
or sensor networks~\cite{de2007homological}
 to name a few. It is beyond the scope to give an exhaustive list of applications of \tda.
On another hand, most of the successes of \tda\ result from its combination with other analysis or learning techniques - see Section \ref{sec:PHandLearning} for a discussion and references. So, clarifying the position and complementarity of \tda\ with respect to other approaches and tools in data science is also an important question and an active research domain.

\medskip
The overall objective of this survey paper is two-fold. First, it intends to provide data scientists with a brief and comprehensive introduction to the mathematical and statistical foundations of \tda. For that purpose, the focus is put on a few selected, but fundamental, tools and topics: simplicial complexes (Section \ref{sec:simplicial-complexes}) and their use for exploratory topological data analysis (Section \ref{sec:mapper}), geometric inference (Section \ref{sec:geometric-inference}) and persistent homology theory (Section \ref{sec:persistent-homology}) that play a central role in \tda. 
Second, this paper also aims at demonstrating how, thanks to the recent progress of software, \tda\ tools can be easily applied in data science. In particular, we show how the Python version of the Gudhi library allows to easily implement and use the \tda\ tools presented in this paper (Section \ref{sec:gudhi}). Our goal is to quickly provide the data scientist with a few basic keys - and relevant references - to get a clear understanding of the basics of \tda\ to be able to start to use \tda\ methods and software for his own problems and data.

\section{Metric spaces, covers and simplicial complexes} \label{sec:simplicial-complexes}


As topological and geometric features are usually associated to continuous spaces, data represented as finite sets of observations, do not directly reveal any topological information per se. A natural way to highlight some topological structure out of data is to ``connect'' data points that are close to each other in order to exhibit a global continuous shape underlying the data. Quantifying the notion of closeness between data points is usually done using a distance (or a dissimilarity measure), and it often turns out to be convenient in \tda\ to consider data sets as discrete metric spaces or as samples of metric spaces. 

\paragraph{Metric spaces.} Recall that a metric space $(M,\rho)$ is a set $M$ with a function $\rho: M \times M \to \R_+$, called a distance,  such that for any $x,y,z \in M$:\par
{\em i)} $\rho(x,y) \geq 0$ and $\rho(x,y)=0$ if and only if $x=y$,\par
{\em ii)} $\rho(x,y) = \rho(y,x)$ and,\par
{\em iii)} $\rho(x,z) \leq \rho(x,y) + \rho(y,z)$.
\par \noindent
Given a a metric space $(M,\rho)$, the set $\mathcal{K}(M)$ of its compact subsets can be endowed with the so-called \emph{Hausdorff distance}:
given two compact subsets $A, B \subseteq M$ the Hausdorff distance $d_H(A,B)$ between $A$ and $B$ is defined as the smallest non negative number $\delta$ such that for any $a \in A$ there exists $b \in B$ such that $\rho(a,b) \leq \delta$ and for any $b \in B$, there exists $a \in A$ such that $\rho(a,b) \leq \delta$ - see Figure \ref{fig:HausdorffDist}.
In other words, if for any compact subset $C \subseteq M$, we denote by $d(.,C) : M \to \R_+$ the distance function to $C$ defined by $d(x,C) := \inf_{c \in C} \rho(x,c)$ for any $x \in M$, then one can prove that the Hausdorff distance between $A$ and $B$ is defined by any of the two following equalities:
\begin{eqnarray*}
d_H(A,B) & = & \max \{ \sup_{b \in B} d(b,A) , \sup_{a \in A} d(a,B) \} \\
& = & \sup_{x \in M} | d(x,A) - d(x,B) | =  \| d(.,A) - d(.,B) \|_\infty
\end{eqnarray*}

\begin{figure}[h]
	\centering
		\includegraphics[width = 0.8 \columnwidth]{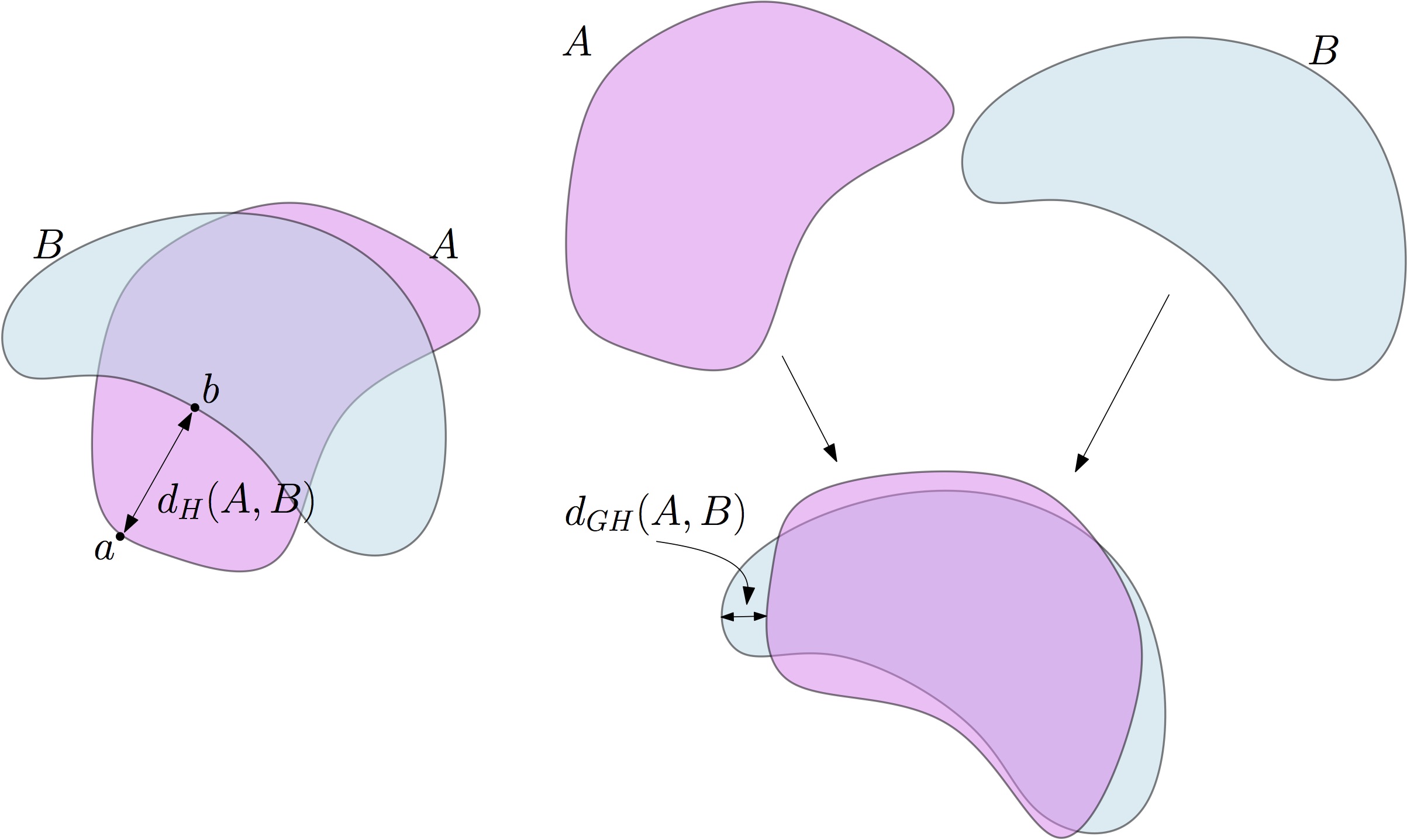} 
		\caption{Right: the Hausdorff distance between two subsets $A$ and $B$ of the plane. In this example, $d_H(A,B)$ is the distance between the point $a$ in $A$ which is the farthest from $B$ and its nearest neighbor $b$ on $B$. Left: The Gromov-Hausdorff distance between $A$ and $B$. $A$ can been rotated - this is an isometric embedding of $A$ in the plane - to reduce its Hausdorff distance to $B$. As a consequence, $d_{GH}(A,B) \leq d_H(A,B)$.} 
	\label{fig:HausdorffDist}
\end{figure}

It is a basic and classical result that the  Hausdorff distance is indeed a distance on the set of compact subsets of a metric space. From a \tda\ perspective it provides a convenient way to quantify the proximity between different data sets issued from the same ambient metric space. 
However, it sometimes occurs in that one has to compare data set that are not sampled from the same ambient space. Fortunately, the notion of Hausdorff distance can be generalized to the comparison of any pair of compact metric spaces, giving rise to the notion of \emph{Gromov-Hausdorff distance}. 

Two compact metric spaces $(M_1,\rho_1)$ and $(M_2,\rho_2)$ are \emph{isometric} if there exists a bijection $\phi : M_1 \to M_2$ that preserves distances, i.e. $\rho_2(\phi(x),\phi(y)) = \rho_1(x,y)$ for any $x,y \in M_1$. The Gromov-Hausdorff distance measures how far two metric space  are from being isometric. 

\begin{definition}
The Gromov-Haudorff distance $d_{GH}(M_1,M_2)$ between two compact metric spaces is the infimum of the real numbers $r \geq 0$ such that there exists a metric space $(M, \rho)$ and two compact subspaces $C_1, C_2 \subset M$ that are isometric to $M_1$ and $M_2$ and such that $d_H(C_1,C_2) \leq r$. 
\end{definition}

The Gromov-Hausdorff distance will be used later, in Section \ref{sec:persistent-homology}, for the study of stability properties persistence diagrams. 

\medskip
Connecting pairs of nearby data points by edges leads to the standard notion of neighboring graph from which the connectivity of the data can be analyzed, e.g. using some clustering algorithms. 
To go beyond connectivity, a central idea in TDA is to build higher dimensional equivalent of neighboring graphs by not only connecting pairs but also $(k+1)$-uple of nearby data points. The resulting objects, called simplicial complexes, allow to identify new topological features such as cycles, voids and their higher dimensional counterpart.

\paragraph{Geometric and abstract simplicial complexes.}
Simplicial complexes  can be seen as higher dimensional generalization of graphs. They are mathematical objects that are both topological and combinatorial, a property making them particularly useful for \tda.

Given a set $\X = \{ x_0,\cdots, x_k \} \subset \R^d$ of $k+1$ affinely independent points, the {\em $k$-dimensional simplex} $\sigma = [x_0, \cdots x_k ]$ spanned 
by $\X$ is the convex hull of $\X$. The points of $\X$ are called the {\em vertices} of $\sigma$ and the simplices spanned by the subsets of $\X$ are called the {\em faces} of $\sigma$. A {\em geometric simplicial complex} $K$ in $\R^d$ is a collection of simplices such that:\\
$i)$ any face of a simplex of $K$ is a simplex of $K$,\\
$ii)$ the intersection of any two simplices of $K$ is either empty or a common face of both.

The union of the simplices of $K$ is a subset of $\R^d$ called the underlying space of $K$ that inherits from the topology of $\R^d$. So, $K$ can also be seen as a topological space through its underlying space. Notice that once its vertices are known, $K$ is fully characterized by the combinatorial description of a collection of simplices satisfying some incidence rules. 

Given a set $V$, an {\em abstract simplicial complex} with vertex set $V$ is a set $\tilde K$ of finite subsets of $V$ such that the elements of $V$ belongs to $\tilde K$ and for any $\sigma \in \tilde K$ any subset of $\sigma$ belongs to $\tilde K$. The elements of $\tilde K$ are called the faces or the simplices of $\tilde K$. The dimension of an abstract simplex is just its cardinality minus $1$ and the dimension of $\tilde K$ is the largest dimension of its simplices. Notice that simplicial complexes of dimension $1$ are graphs. 

The combinatorial description of any geometric simplicial $K$ obviously gives rise to an abstract simplicial complex $\tilde K$. The converse is also true: one can always associate to an abstract simplicial complex $\tilde K$, a topological space $|\tilde K|$ such that if $K$ is a geometric complex whose combinatorial description is the same as $\tilde K$, then the underlying space of $K$ is homeomorphic to $|\tilde K|$. Such a $K$ is called a {\em geometric realization} of $\tilde K$. 
As a consequence, abstract simplicial complexes can be seen as topological spaces and geometric complexes can be seen as geometric realizations of their underlying combinatorial structure.  So, one can consider simplicial complexes at the same time as combinatorial objects that are well-suited for effective computations and as topological spaces from which topological properties can be inferred. 

\paragraph{Building simplicial complexes from data.}
Given a data set, or more generally a topological or metric space, there exist many ways to build simplicial complexes. We present here a few classical examples that are widely used in practice. 

A first example, is an immediate extension of the notion of $\alpha$-neighboring graph. Assume that we are given a set of points $\X$ in a metric space $(M, \rho)$
and a real number $\alpha \geq 0$. The \emph{Vietoris-Rips complex} $\Rips_\alpha(\X)$ is the set of simplices
$[x_0,\ldots,x_k]$ such that $d_\X(x_i,x_j)\leq\alpha$ for all $(i,j)$. It follows immediately from the definition that this is an abstract simplicial complex. However, in general, even when $\X$ is a finite subset of $\R^d$, $\Rips_\alpha(\X)$ does not admit a geometric realization in $\R^d$; in particular, it can be of dimension higher than $d$.

\begin{figure}[h]
	\centering
		\includegraphics[width = 0.8 \columnwidth]{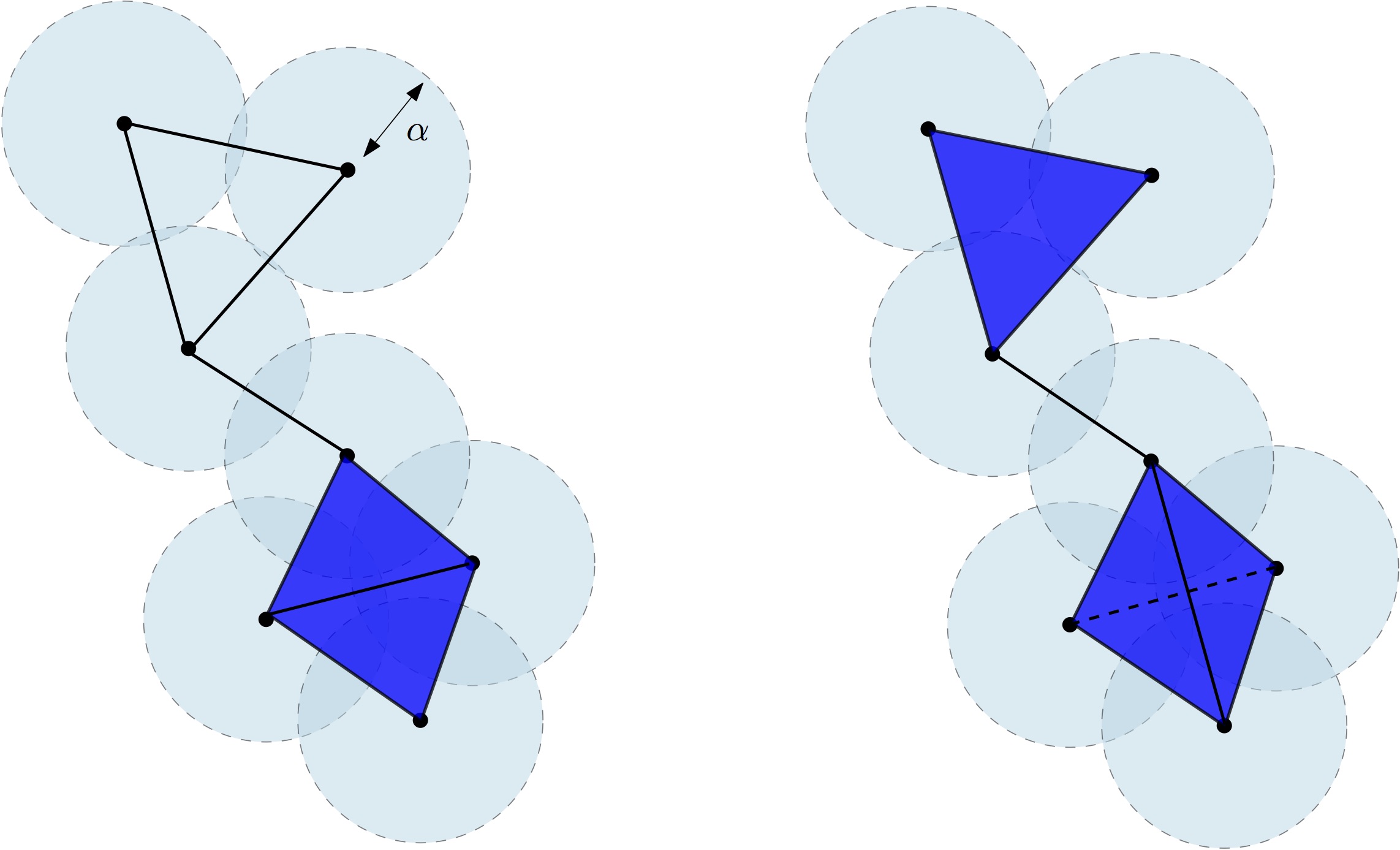} 
		\caption{The \v Cech complex $\Cech_\alpha(\X)$ (left) and the and Vietoris-Rips $\Rips_{2 \alpha}(\X)$ (right) of a finite point cloud in the plane $\R^2$. The bottom part of $\Cech_\alpha(\X)$ is the union of two adjacent triangles, while the bottom part of $\Rips_{2 \alpha}(\X)$ is the tetrahedron spanned by the four vertices and all its faces. The dimension of the \v Cech complex is $2$. The dimension of the Vietoris-Rips complex is $3$. Notice that this later is thus not embedded in $\R^2$.} 
	\label{fig:RipsCech}
\end{figure}

Closely related to the Vietoris-Rips complex is the \emph{\v Cech complex} $\Cech_\alpha(\X)$ that is defined as
the set of simplices $[x_0,\ldots,x_k]$ such that the $k+1$ closed balls
$B(x_i,\alpha)$ have a non-empty intersection. 
Notice that these two complexes are related by
$$\Rips_\alpha(\X)\subseteq\Cech_\alpha(\X)\subseteq\Rips_{2\alpha}(\X)$$
and that, if $\X \subset \R^d$ then $\Cech_\alpha(\X)$ and $\Rips_{2\alpha}(\X)$ have the same $1$-dimensional skeleton, i.e. the same set of vertices and edges.

\paragraph{The nerve theorem.}
The \v Cech complex is a particular case of a family of complexes associated to covers. Given a {\em cover} $\mathcal{U} = (U_i)_{i \in I}$ of $\M$, i.e. a family of sets $U_i$ such that $\M = \cup_{i \in I} U_i$, the {\em nerve of $\mathcal{U}$} is the abstract simplicial complex $C(\mathcal{U})$ whose vertices are the $U_i$'s and such that
$$\sigma = [U_{i_0}, \cdots, U_{i_k}] \in C(\mathcal{U}) \ \mbox{\rm if and only if} \ \bigcap_{j=0}^k U_{i_j} \not = \emptyset.$$
Given a cover of a data set, where each set of the cover can be, for example, a local cluster or a grouping of data points sharing some common properties, its nerve provides a compact and global combinatorial description of the relationship between these sets through their intersection patterns - see Figure \ref{fig:nerve}. 

\begin{figure}[h]
	\centering
		\includegraphics[width = 0.8 \columnwidth]{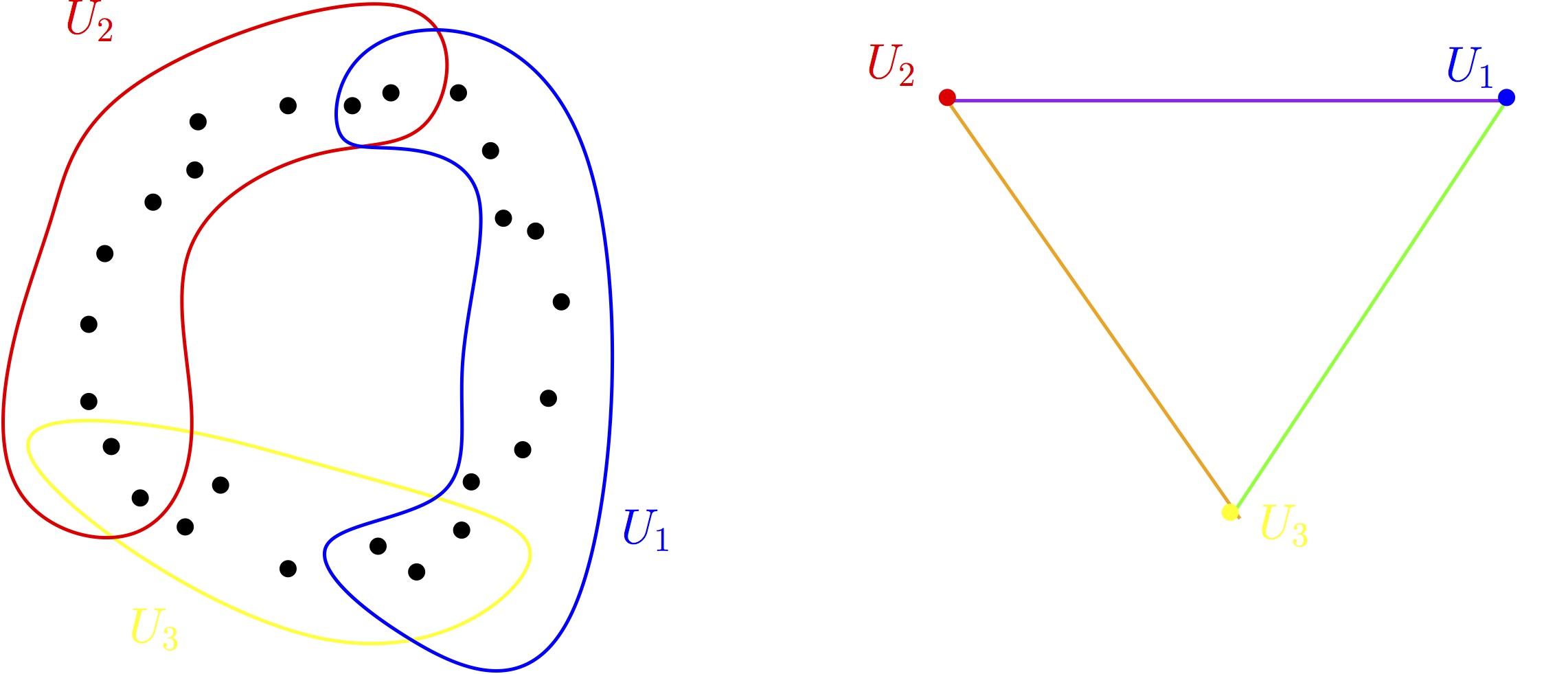} 
		\caption{The nerve of a cover of a set of sampled points in the plane.}
	\label{fig:nerve}
\end{figure}

A fundamental theorem in algebraic topology, relates, under some assumptions, the topology of the nerve of a cover to the topology of the union of the sets of the cover. To be formally stated, this result, known as the Nerve Theorem, requires to introduce a few notions. 

Two topological spaces $X$ and $Y$ are usually considered as being the same from a topological point of view if they are {\em homeomorphic}, i.e. if there exist two continuous bijective maps $f: X \to Y$ and $g: Y \to X$ such that $f \circ g$ and $g \circ f$ are the identity map of $Y$ and $X$ respectively.  
In many cases, asking $X$ and $Y$ to be homeomorphic turns out to be a too strong requirement to ensure that $X$ and $Y$ share the same topological features of interest for \tda. 
Two continuous maps $f_0, f_1: X \to Y$ are said to be {\em homotopic} is there exists a continuous map $H : X \times [0,1] \to Y$ such that for any $x \in X$, $H(x,0) = f_0(x)$ and $H(x,1) = g(x)$. The spaces $X$ and $Y$ are then said to be {\em homotopy equivalent} if there exist two maps $f: X \to Y$ and $g: Y \to X$ such that $f \circ g$ and $g \circ f$ are homotopic to the identity map of $Y$ and $X$ respectively. The maps $f$ and $g$ are then called {\em homotopy equivalent}. The notion of homotopy equivalence is weaker than the notion of homeomorphism: if $X$ and $Y$ are homeomorphic then they are obviously homotopy equivalent, the converse being not true. However, spaces that are homotopy equivalent still share many topological invariant, in particular they have the same homology - see Section \ref{sec:geometric-inference}. \\
A space is said to be {\em contractible} if it is homotopy equivalent to a point. Basic examples of contractible spaces are the balls and, more generally, the convex sets in $\R^d$. Open covers whose all elements and their intersections are contractible have the remarkable following property. 

\begin{theorem}[Nerve theorem] \label{thm:nerve}
Let $\mathcal{U} = (U_i)_{i \in I}$ be a cover of a topological space $X$ by open sets such that the intersection of any subcollection of the $U_i$'s is either empty or contractible. Then, $X$ and the nerve $C(\mathcal{U})$ are homotopy equivalent. 
\end{theorem}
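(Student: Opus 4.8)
The plan is to avoid constructing a homotopy equivalence directly between $X$ and the geometric realization $|C(\mathcal{U})|$, and instead to introduce an auxiliary ``blow-up'' space that projects onto both, and to show that each projection is a homotopy equivalence. Writing $U_\sigma = \bigcap_{i \in \sigma} U_i$ for a simplex $\sigma \in C(\mathcal{U})$ and $\Delta_\sigma \subset |C(\mathcal{U})|$ for the corresponding closed geometric simplex, I would set
$$\mathcal{B} \;=\; \Big( \coprod_{\sigma \in C(\mathcal{U})} U_\sigma \times \Delta_\sigma \Big) \Big/ \!\sim,$$
where, for each face inclusion $\tau \subseteq \sigma$ (so that $U_\sigma \subseteq U_\tau$ and $\Delta_\tau \subseteq \Delta_\sigma$), a point $(x,a) \in U_\sigma \times \Delta_\tau$ is identified with the same pair viewed in $U_\tau \times \Delta_\tau$. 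This space carries two natural projections $\pi_X : \mathcal{B} \to X$, $(x,a) \mapsto x$, and $\pi_N : \mathcal{B} \to |C(\mathcal{U})|$, $(x,a) \mapsto a$, and the problem reduces to proving that both are homotopy equivalences.

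First I would treat $\pi_X$. Its fiber over a point $x \in X$ is the closed simplex $\Delta_{\sigma(x)}$, where $\sigma(x) = \{ i : x \in U_i \}$ is the nonempty finite set of cover elements containing $x$; in particular every fiber is convex, hence contractible. To upgrade this to a genuine homotopy equivalence I would use a partition of unity $\{\phi_i\}$ subordinate to $\mathcal{U}$ — which exists once $X$ is assumed paracompact, equivalently once the open cover is taken numerable — to define a section $s : X \to \mathcal{B}$ by $s(x) = (x, \sum_i \phi_i(x)\, v_i)$, where $v_i$ is the vertex of the nerve attached to $U_i$. Then $\pi_X \circ s = \mathrm{id}_X$, while $s \circ \pi_X \simeq \mathrm{id}_{\mathcal{B}}$ via the fibrewise straight-line homotopy inside the convex simplices $\Delta_{\sigma(x)}$, so $\pi_X$ is a homotopy equivalence.

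Next I would treat $\pi_N$, and this is where the contractibility hypothesis enters decisively. The fiber of $\pi_N$ over a point in the open interior of $\Delta_\sigma$ is exactly $U_\sigma = \bigcap_{i \in \sigma} U_i$, which by assumption is contractible (it cannot be empty, since $\sigma$ is a simplex of the nerve). Thus $\pi_N$ has contractible fibers over each open simplex. Since contractible fibers alone do not force a homotopy equivalence, the key step is a gluing argument: I would exhibit a numerable open cover of $|C(\mathcal{U})|$ — adapted to the simplicial structure, e.g. built from open stars — over each member of which the total space fibrewise deformation-retracts onto a single contractible $U_\sigma$, so that $\pi_N$ restricts to a homotopy equivalence there, and then invoke Dold's theorem (a map which is a homotopy equivalence over each set of a numerable cover of the base is a homotopy equivalence) to conclude globally. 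Composing $\pi_X$ with a homotopy inverse of $\pi_N$ then yields the desired homotopy equivalence between $X$ and $C(\mathcal{U})$.

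I expect the main obstacle to be precisely this last gluing step. Establishing fibre contractibility is immediate from the hypotheses, but promoting ``contractible fibres'' to ``homotopy equivalence'' is the real content of the theorem and cannot be done pointwise; it forces the paracompactness/numerability assumption that the bare statement suppresses, and it requires assembling the local contractions of the various $U_\sigma$ into a single coherent global one. Checking that these local data are compatible across the face relations defining $\mathcal{B}$ — that the deformations respect the identifications $U_\sigma \times \Delta_\tau \hookrightarrow U_\tau \times \Delta_\tau$ — is the delicate bookkeeping at the heart of the argument.
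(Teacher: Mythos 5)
First, a point of reference: the paper states the Nerve Theorem as a classical result and gives no proof of it at all, so there is no ``paper proof'' to compare against; your argument has to be judged on its own merits. Structurally, what you propose is the standard modern proof (the Mayer--Vietoris blowup / homotopy colimit argument, as in Hatcher's Section 4.G or Segal's classifying-space paper): the space $\mathcal{B}$, the two projections, the partition-of-unity section for $\pi_X$, and the identification of the fibers of $\pi_N$ with the sets $U_\sigma$ are all correct. You are also right that the theorem as stated in the paper -- for an arbitrary topological space and arbitrary open cover -- is false without a paracompactness (numerability) hypothesis; flagging that hidden assumption is a genuine point in your favor.

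There is, however, one concrete gap, and it sits exactly at the step you yourself call the heart of the argument. The parenthetical form of ``Dold's theorem'' you invoke -- \emph{a map which is a homotopy equivalence over each set of a numerable cover of the base is a homotopy equivalence} -- is false as stated. Counterexample: let $Y = S^1$, covered by the two arcs $V_1 = S^1\setminus\{1\}$ and $V_2 = S^1\setminus\{-1\}$, and let $X = \{p_1,p_2\}$ be a two-point discrete space with $f(p_1)=-1$, $f(p_2)=1$. Then $f^{-1}(V_1)=\{p_1\}\to V_1$ and $f^{-1}(V_2)=\{p_2\}\to V_2$ are homotopy equivalences (a point included into a contractible arc), the cover is numerable, yet $f$ is not a homotopy equivalence since $X$ is disconnected. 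The correct gluing statement requires the restriction of the map to be a homotopy equivalence over \emph{every finite intersection} of members of the cover (equivalently, one takes a numerable cover closed under finite intersections); in the counterexample this fails precisely over $V_1\cap V_2$, whose preimage is empty. Alternatively, Dold's original theorem asks for \emph{fiberwise} homotopy equivalences over the members, which is also strictly stronger than what you verify. Fortunately the repair is routine with the very cover you propose: the finite intersections $\bigcap_{i\in\sigma}\mathrm{St}(v_i)$ of open stars are empty or star-shaped around the barycenter of $\sigma$, hence contractible, and the $\pi_N$-preimage of such an intersection deformation retracts onto the contractible set $U_\sigma$, so the corrected lemma applies verbatim. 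But until that intersection condition is stated and checked, the proof of the $\pi_N$ half is incomplete, because the lemma it leans on, in the form you quote it, is not true.
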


It is easy to verify that convex subsets of Euclidean spaces are contractible. As a consequence, if $\mathcal{U} = (U_i)_{i \in I}$ is a collection of convex subsets of $\R^d$ then $C(\mathcal{U})$ and $\cup_{i\in I} U_i$ are homotopy equivalent. In particular, if $\X$ is a set of points in $\R^d$, then the \v Cech complex $\Cech_\alpha(\X)$ is homotopy equivalent to the union of balls $\cup_{x \in \X} B(x,\alpha)$. 

The Nerve Theorem plays a fundamental role in \tda: it provide a way to encode the topology of continuous spaces into abstract combinatorial structures that are well-suited for the design of effective data structures and algorithms.


\section{Using covers and nerves for exploratory data analysis and visualization: the Mapper algorithm} \label{sec:mapper}

Using the nerve of covers as a way to summarize, visualize and explore data is a natural idea that was first proposed for \tda\ in \cite{singh2007topological}, giving rise to the so-called Mapper algorithm. 

\begin{definition}
Let $f : X \to \R^d$, $d \geq 1$, be a continuous real valued function and let $\mathcal{U} = (U_i)_{i \in I}$ be a cover of $\R^d$. The pull back cover of $X$ induced by $(f,\mathcal{U})$ is the collection of open sets $(f^{-1}(U_i))_{i \in I}$. The refined pull back is the collection of connected components of the open sets $f^{-1}(U_i)$, $i \in I$. 
\end{definition}

\begin{figure}[h]
	\centering
		\includegraphics[width = 1 \columnwidth]{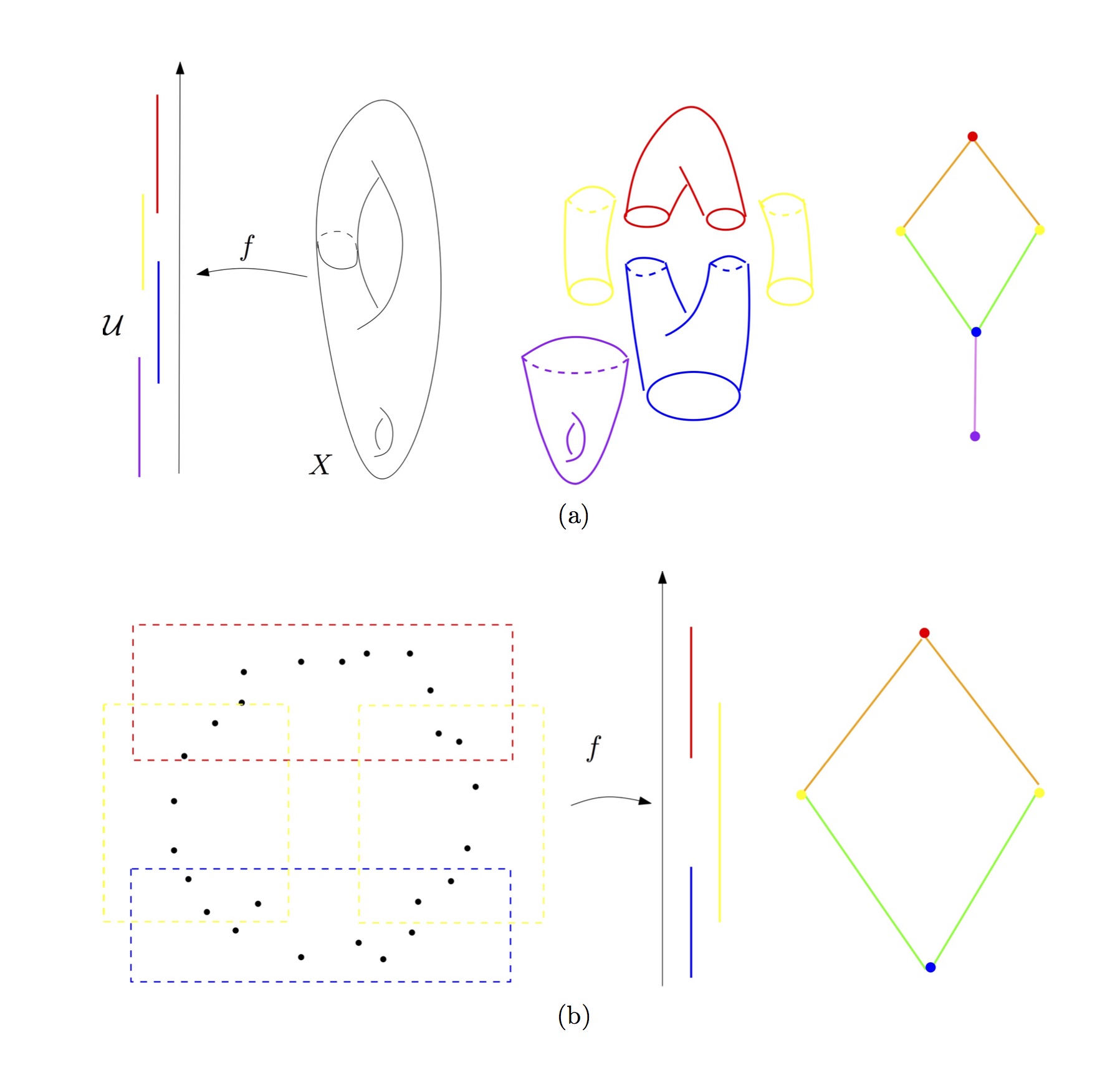} 
		\caption{(a) The refined pull back cover of the height function on a surface in $\R^3$ and its nerve. (b) The mapper algorithm on a point cloud sampled around a circle.}
	\label{fig:mapper}
\end{figure}

The idea of the Mapper algorithm is, given a data set $\X$ and well-chosen real valued function $f: \X \to \R^d$, to summarize $\X$ through the nerve of the refined pull back  of a cover $\mathcal{U}$ of $f(\X)$, see Figure~\ref{fig:mapper}(a) . For well-chosen covers $\mathcal{U}$ (see below), this nerve is a graph providing an easy and convenient way to visualize the summary of the data. 
It is described in Algorithm \ref{alg:mapper} and illustrated on a simple example in Figure~\ref{fig:mapper}(b). 

The Mapper algorithm is very simple but it raises several questions about the various choices that are left to the user and that we briefly discuss in the following.

\begin{algorithm}[ht]
  \caption{The Mapper algorithm}
  \label{alg:mapper}
  \begin{algorithmic}
    \STATE{\rm \bf Input:} A data set $\X$ with a metric or a dissimilarity measure between data points, a function $f : \X \to \R$ (or $\R^d$), and a cover $\mathcal{U}$ of $f(\X)$.
    \STATE  for each $U \in \mathcal{U}$, decompose $f^{-1}(U)$ into clusters $C_{U,1}, \cdots, C_{U,k_U}$.
    \STATE  Compute the nerve of the cover of $X$ defined by the $C_{U,1}, \cdots, C_{U,k_U}$, $U \in \mathcal{U}$
    \STATE{\rm \bf Output:}  a simplicial complex, the nerve (often a graph for well-chosen covers $\to$ easy to visualize): \\
- a vertex $v_{U,i}$ for each cluster $C_{U,i}$,\\
- an edge between $v_{U,i}$ and $v_{U',j}$ iff $C_{U,i} \cap C_{U',j} \not = \emptyset$
  \end{algorithmic}
\end{algorithm}

\paragraph{The choice of $f$.} The choice of the function $f$, sometimes called the filter or lens function, strongly depends on the features of the data that one expect to highlight. The following ones are among the ones more or less classically encountered in the literature:
\begin{itemize}
\item[-] Density estimates: the mapper complex may help to understand the structure and connectivity of high density areas (clusters). 
\item[-] PCA coordinates or coordinates functions obtained from a non linear dimensionality reduction (NLDR) technique, eigenfunctions of graph laplacians,... may help to reveal and understand some ambiguity in the use of non linear dimensionality reductions. 
\item[-] The centrality function $f(x) = \sum_{y \in \X} d(x,y)$ and the eccentricity function $f(x) = \max_{y\in \X} d(x,y)$, appears sometimes to be good choices that do not require any specific knowledge about the data. 
\item[-] For data that are sampled around 1-dimensional filamentary structures, the distance function to a given point allows to recover the underlying topology of the filamentary structures \cite{Chazal:2015:GAF:2767938.2767967}.
\end{itemize}

\paragraph{The choice of the cover $\mathcal{U}$.} When $f$ is a real valued function, a standard choice is to take $\mathcal{U}$ to be a set of regularly spaced intervals of equal length $r >0$ covering the set $f(\X)$. The real $r$ is sometimes called the {\em resolution} of the cover and the percentage $g$ of overlap between two consecutive intervals is called the the {\em gain} of the cover.
Note that if the gain $g$ is chosen below $50\%$, then every point of the real line is covered by at most 2 open sets of $\mathcal{U}$ and the output nerve is a graph. 
It is important to notice that the output of the Mapper is very sensitive to the choice of $\mathcal{U}$ and small changes in the resolution and gain parameters may results in very large changes in the output, making the method very unstable.  A classical strategy consists in exploring some range of parameters and select the ones that turn out to provide the most informative output from the user perspective. 


\paragraph{The choice of the clusters.} The Mapper algorithm requires to cluster the preimage of the open sets $U \in \mathcal{U}$. There are two strategies to compute the clusters. A first strategy consists in applying, for each $U \in \mathcal{U}$, a cluster algorithm, chosen by the user, to the premimage $f^{-1}(U)$. A second, more global, strategy consists in building a neighboring graph on top of the data set $\X$, e.g. k-NN graph or $\eps$-graph, and, for each $U \in \mathcal{U}$, taking the connected components of the subgraph with vertex set $f^{-1}(U)$.

\paragraph{Theoretical and statistical aspects of Mapper.}
Based on the results on stability and the structure of Mapper proposed in \cite{carriere2015structure}, advances towards a statistically well-founded version of Mapper have been obtained recently in~\cite{carriere2018statistical}. Unsurprisingly, the convergence of Mapper depends on both the sampling of the data and the regularity of the filter function. Moreover, subsampling strategies can be proposed to select a complex in a Rips filtration at a convenient scale, as well as the  resolution and the gain for defining the Mapper graph. The case of stochastic and multivariate filters has also been studied in \cite{carriere2019approximation}. 
An alternative description of the probabilistic convergence of Mapper, in term of categorification, has also been proposed in \cite{brown2020probabilistic}. Other approaches have been proposed to study and deal with the instabilities of the Mapper algorithm in \cite{dey2016multiscale, dey2017topological}.  

\paragraph{Data Analysis with Mapper.} 

As an exploratory data analysis tool, Mapper has been successfully used for clustering and feature selection.
The idea is to identify specific structures in the Mapper graph (or complex), in particular loops and flares. 
These structures are then used to identify interesting clusters or to select features or variable that best discriminate the data in these structures. Applications on real data, illustrating these techniques, may be found, for example, in \cite{yao2009topological,lum2013extracting,carriere2020topological}.
 




\section{Geometric reconstruction and homology inference} \label{sec:geometric-inference}


Another way to build covers and use their nerves to exhibit the topological structure of data is to consider union of balls centered on the data points. 
In this section, we assume that $\X_n = \{ x_0,\cdots, x_n \}$ is a subset of $\R^d$ sampled i.i.d. according to a probability measure $\mu$ with compact support $M \subset \R^d$. 
The general strategy to infer topological information about $M$ from $\mu$ proceeds in two steps that are discussed in the following of this section:
\begin{enumerate}
\item $\X_n$ is covered by a union of balls of fixed radius centered on the $x_i$'s. Under some regularity assumptions on $M$, one can relate the topology of this union of balls to the one of $M$;
\item From a practical and algorithmic perspective, topological features of $M$ are inferred from the nerve of the union of balls, using the Nerve Theorem. 
\end{enumerate}

In this framework, it is indeed possible to compare spaces through {\em isotopy equivalence}, a stronger notion than homeomorphism: $X \subseteq \R^d$ and $Y \subseteq \R^d$ are said to be {\em (ambient) isotopic} if there exists a continuous family of homeomorphisms $H : [0,1] \times \R^d \to \R^d$, $H$ continuous, such that for any $t \in [0,1]$, $H_t = H(t,.) : \R^d \to \R^d$ is an homeomorphism, $H_0$ is the identity map in $\R^d$ and $H_1(X) = Y$. Obviously, if $X$ and $Y$ are isotopic, then they are homeomorphic. The converse is not true: a knotted and an unknotted circles in $\R^3$ are not homeomorphic (notice that although this claim seems rather intuitive, its formal proof requires the use of some non obvious algebraic topology tools). 

\subsection{Distance-like functions and reconstruction} \label{sec:distance-like}
Given a compact subset $K$ of $\R^d$, and a non negative real number $r$, the union of balls of radius $r$ centered on $K$, $K^r = \cup_{x \in K} B(x,r)$, called the $r$-offset of $K$, is the $r$-sublevel set of the distance function $d_K : \R^d \to \R$ defined by $d_K(x) = \inf_{y \in K} \| x-y\|$; in other words, $K^r = d_k^{-1}([0,r])$. This remark allows to use differential properties of distance functions and to compare the topology of the offsets of compact sets that are close to each other with respect to the Hausdorff distance. 

\begin{definition}[Hausdorff distance in $\R^d$]
The Hausdorff distance between two compact subsets $K, K'$ of $\R^d$ is defined by
$$d_H(K,K') = \| d_K - d_{K'} \|_\infty = \sup_{x \in \R^d} |d_K(x) - d_{K'}(x)|.$$
\end{definition}

In our setting, the considered compact sets are the data set $\X_n$ and of the support $M$ of the measure $\mu$. When $M$ is a smooth compact submanifold, under mild conditions on $d_H(\X_n,M)$, for some well-chosen $r$, the offsets of $\X_n$ are homotopy equivalent to $M$, \cite{niyogi2008finding,cl-smrnn-08} - see Figure \ref{fig:offset-torus} for an illustration. These results extend to larger classes of compact sets and leads to stronger results on the inference of the isotopy type of the offsets of $M$, \cite{chazal2009sampling,chazal2008nca}. They also lead to results on the estimation of other geometric and differential quantities such as normals \cite{chazal2008nca}, curvatures \cite{chazal2008scm} or boundary measures  \cite{chazal2007sbm} under assumptions on the Haussdorff distance between the underlying shape and the data sample. 

\begin{figure}[h]
	\centering
		\includegraphics[width = 0.6 \columnwidth]{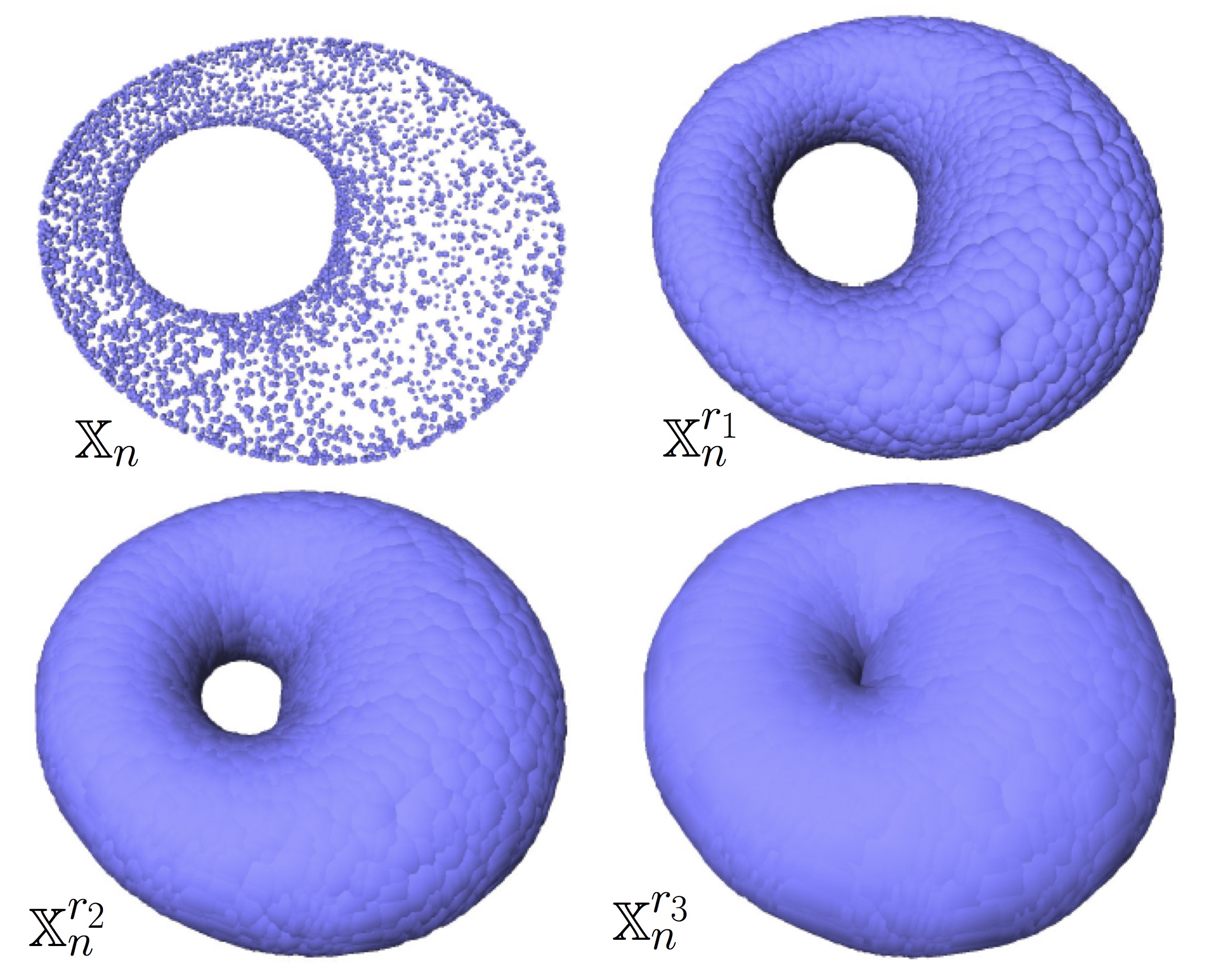} 
		\caption{The example of a point cloud $\X_n$ sampled on the surface of a torus in $\R^3$ (top left) and its offsets for different values of radii $r_1 < r_2 < r_3$. For well chosen values of the radius (e.g. $r_1$ and $r_2$), the offsets are clearly homotopy equivalent to a torus.}
	\label{fig:offset-torus}
\end{figure}  

These results rely on the $1$-semiconcavity of the squared distance function $d_K^2$, i.e. the convexity of the function $x \to \| x \|^2 -d_K^2(x)$, and can be naturally stated in the following general framework. 

\begin{definition}
A function $\phi: \R^d\to\R_+$ is {\em distance-like} if it is proper (the pre-image of any compact set in $\R$ is a compact set in $\R^d$) and $x \to \| x \|^2 - \phi^2(x)$ is convex.
\end{definition} 

Thanks to its semiconcavity, a distance-like function $\phi$ have a well-defined, but not continuous, gradient $\nabla \phi : \R^d \to \R^d$ that can be integrated into a continuous flow \citep{petrunin} that allows to track the evolution of the topology of its sublevel sets and to compare it to the one of the sublevel sets of close distance-like functions.  

\begin{definition}
Let $\phi$ be a distance-like function and let $\phi^r = \phi^{-1}([0,r])$ be the $r$-sublevel set of $\phi$.
\begin{itemize}
\item A point $x \in \R^d$ is called \emph{$\alpha$-critical} if $\| {\nabla_x \phi} \| \leq \alpha$. The corresponding value $r = \phi(x)$ is also said to be \emph{$\alpha$-critical}.
\item The \emph{weak feature size} of $\phi$ at $r$ is the minimum $r'
  > 0$ such that $\phi$ does not have any critical value between $r$
  and $r+r'$.  We denote it by $\wfs_\phi(r)$.
For any $0 < \alpha < 1$, the \emph{$\alpha$-reach} of $\phi$ is
  the maximum $r$ such that $\phi^{-1}((0,r])$ does not contain any
  $\alpha$-critical point.
\end{itemize}
\end{definition}

The weak feature size $\wfs_\phi(r)$ (resp. $\alpha$-reach) measures the regularity of $\phi$ around its $r$-level sets (resp. $O$-level set). When $\phi = d_K$ is the distance function to a compact set $K \subset \R^d$, the $1$-reach coincides with the classical reach from geometric measure theory \cite{federer1959curvature}. Its estimation from random samples has been studied in \cite{aamari2019estimating}. 
An important property of a distance-like function $\phi$ is the topology of their sublevel sets $\phi^r$ can only change when $r$ crosses a $0$-critical value. 

\begin{lemma}[Isotopy Lemma \cite{grove1993cpt}] \label{prop:isotopy}
Let $\phi$ be a distance-like function and $r_1< r_2$ be two positive
numbers such that $\phi$ has no $0$-critical point, i.e. points $x$ such that $\nabla \phi(x) = 0$, in the subset
$\phi^{-1}([r_1,r_2])$. Then all the sublevel sets $\phi^{-1}([0,r])$ are
isotopic for $ r \in [r_1,r_2]$.
\end{lemma}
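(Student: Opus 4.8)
The plan is to imitate the classical deformation lemma of Morse theory: in the absence of critical values one pushes the level sets across the band $\phi^{-1}([r_1,r_2])$ along the gradient and reads off an ambient isotopy. The only, but essential, difference is that $\phi$ is merely distance-like, so $\nabla\phi$ is defined everywhere but is not continuous; the smooth flow of Morse theory must therefore be replaced by the continuous gradient flow of semiconcave functions. Since isotopy of sublevel sets is an equivalence relation and $[r_1,r_2]$ is connected, it suffices to produce, for every $r\in(r_1,r_2]$, an ambient isotopy $H:[0,1]\times\R^d\to\R^d$ with $H_0=\Id$, $H_1(\phi^{-1}([0,r_1]))=\phi^{-1}([0,r])$, and $H_t$ equal to the identity outside the shell $\phi^{-1}([r_1,r_2])$.

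First I would establish the enabling quantitative estimate. The set $\phi^{-1}([r_1,r_2])$ is compact because $\phi$ is proper, and the map $x\mapsto\|\nabla_x\phi\|$ is lower semicontinuous for a distance-like function, a standard consequence of the semiconcavity of $\phi^2$. A lower semicontinuous function attains its infimum on a compact set, and by hypothesis that infimum is not zero, so there is $\ell>0$ with $\|\nabla_x\phi\|\ge\ell$ for all $x\in\phi^{-1}([r_1,r_2])$. This uniform bound is what will guarantee that trajectories cross the shell in uniformly bounded time and that the reparametrisation below is well behaved.

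Next I would invoke the continuous gradient flow $C:\R_+\times\R^d\to\R^d$ integrating $\nabla\phi$, whose existence for distance-like functions is the result of Petrunin recalled above. The two properties that matter are that $C$ is jointly continuous in $(t,x)$ and that $\phi$ increases strictly along each trajectory inside the shell, with right derivative $\tfrac{d^{+}}{dt}\phi(C(t,x))=\|\nabla_{C(t,x)}\phi\|^2\ge\ell^2$ there. This lets me realise any prescribed reparametrisation of $\phi$-values by the flow: choosing for each $t\in[0,1]$ an increasing homeomorphism $h_t$ of $[r_1,+\infty)$ that fixes $[r_2,+\infty)$ pointwise, equals the identity at $t=0$, and sends $r_1$ to $r$ at $t=1$, I define $H_t$ to be the identity on $\phi^{-1}([0,r_1])\cup\phi^{-1}([r_2,+\infty))$ and, on the shell, to flow each $x$ forward for the unique time needed to raise its value from $\phi(x)$ to $h_t(\phi(x))$. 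The uniform rate $\ell^2$ makes this time depend continuously on $x$ and $t$, and the facts that distinct trajectories never meet and that $\phi$ is strictly monotone along them inside the shell make $H_t$ injective; continuity of the flow then yields that each $H_t$ is a homeomorphism of $\R^d$ with $H_1(\phi^{-1}([0,r_1]))=\phi^{-1}([0,r])$.

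The main obstacle I anticipate is precisely the non-smoothness of $\phi$: the field $\nabla\phi$ is discontinuous, so none of the classical integration arguments apply verbatim, and one must lean on the Petrunin flow as a black box and check that the reparametrisation by $\phi$-value restores enough regularity, namely continuity of the crossing times and joint continuity of $H$, to conclude that each $H_t$ is a homeomorphism and not merely a continuous surjection. Verifying that $H$ is a genuine ambient isotopy, equal to the identity outside a compact neighbourhood of the shell and invertible for every $t$, rather than just a deformation retraction of the sublevel sets, is where the care is needed; the uniform lower bound $\ell$ on $\|\nabla\phi\|$ is the ingredient that makes all of these continuity and bijectivity verifications go through.
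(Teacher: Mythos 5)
Your opening steps are sound: properness makes the shell $\phi^{-1}([r_1,r_2])$ compact, lower semicontinuity of $x\mapsto\|\nabla_x\phi\|$ (a genuine consequence of semiconcavity) gives the uniform bound $\|\nabla_x\phi\|\geq\ell>0$ there, and the Petrunin/Lieutier flow does satisfy $\frac{d^+}{dt}\phi(C(t,x))=\|\nabla_{C(t,x)}\phi\|^2$. But the proof collapses at the step where you assert that ``distinct trajectories never meet'' and deduce injectivity of $H_t$. The gradient flow of a distance-like function is only a \emph{semi-flow}: it is forward-unique but not backward-unique, and distinct trajectories routinely merge, even in regions where $\|\nabla\phi\|$ is bounded away from $0$. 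Concretely, take $\phi=d_K$ with $K=\{(-1,0),(1,0)\}\subset\R^2$; the only critical point is the origin, with value $1$, so the shell $\phi^{-1}([1.1,2])$ satisfies the hypothesis. A point near the level set $\{\phi=1.1\}$ slightly to the right of the $y$-axis flows along the straight ray emanating from $(1,0)$, reaches the $y$-axis (the medial axis) in finite time at a value just above $1.1$, and thereafter follows the axis upward; its mirror image reaches the same point and follows the same trajectory. Your $H_t$ identifies such pairs as soon as $h_t$ pushes their common value past the merging value, so it is a continuous surjection realizing a deformation but not a homeomorphism, and no reparametrisation in the $\phi$-variable can repair this, since the failure of injectivity is in the spatial variable. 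This is exactly why the flow argument gives homotopy equivalence of sublevel sets (as used in the Reconstruction Theorem) but not, by itself, the isotopy statement.

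The repair requires a different ingredient, and it is the one used in the source the paper cites for this lemma (Grove's critical point theory for distance functions; see also Chazal--Cohen-Steiner--Lieutier for the Euclidean distance-like setting): instead of integrating the discontinuous field $\nabla\phi$, one builds a locally Lipschitz (or smooth) \emph{gradient-like} unit vector field $V$ on a neighbourhood of the shell, with $\langle V(x),\nabla_x\phi\rangle\geq c>0$, by choosing near each point the fixed direction $\nabla_x\phi/\|\nabla_x\phi\|$, using semiconcavity to check that this direction remains uniformly ascending for $\phi$ on a small ball, and patching with a partition of unity. A Lipschitz field has unique integral curves in both time directions, so its flow is injective, and reparametrising that flow by $\phi$-values as you propose does produce the ambient isotopy. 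Two further, more minor, defects of your construction would also need fixing: as written, $H_t$ is the identity on all of $\phi^{-1}([0,r_1])$ while sending points at level $r_1$ to level $h_t(r_1)>r_1$, which is inconsistent (and makes $H_1(\phi^{-1}([0,r_1]))$ fail to equal $\phi^{-1}([0,r])$); the deformation must begin slightly below $r_1$, which is legitimate because the set of $0$-critical points is closed (again by lower semicontinuity of $\|\nabla\phi\|$) and $\phi$ is proper, so there is $\epsilon>0$ with no critical point in $\phi^{-1}([r_1-\epsilon,r_2])$. Note finally that the paper itself offers no proof to compare against: it quotes the lemma from Grove, whose argument is of the gradient-like-field type just described, not of the flow type you attempted.
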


As an immediate consequence of the Isotopy Lemma, all the sublevel sets of $\phi$ between $r$ and $r+\wfs_\phi(r)$ have the same topology.
Now the following reconstruction theorem from \cite{chazal2011geometric} provides a connection between the topology of the sublevel sets of close distance-like functions. 

\begin{theorem}[Reconstruction Theorem] \label{thm:distance-like-reconstruction}
Let $\phi, \psi$ be two distance-like functions such that $\| \phi - \psi \|_\infty < \eps$, with
$\reach_\alpha (\phi) \geq R$ for some positive $\eps$ and $\alpha$. Then, for every
$r \in [4\eps/\alpha^2, R - 3\eps]$ and every $\eta\in (0,R)$, the sublevel
sets $\psi^r$ and $\phi^\eta$ are homotopy equivalent when
$$\eps \leq  \frac{R}{5 + 4/\alpha^2} .$$
\end{theorem}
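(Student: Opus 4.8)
The plan is to show that, in the relevant range of values, both the $\phi$-sublevel sets and the $\psi$-sublevel sets form \emph{trivial} towers (every inclusion being a homotopy equivalence), and then to bridge the two towers through the elementary interleaving produced by $\|\phi-\psi\|_\infty<\eps$. First I would record the chain of inclusions that this closeness yields: since $\phi-\eps\le\psi\le\phi+\eps$ pointwise, one has
$$\phi^{r-\eps}\subseteq\psi^r\subseteq\phi^{r+\eps}\subseteq\psi^{r+2\eps}\subseteq\phi^{r+3\eps}.$$
Because $\reach_\alpha(\phi)\ge R$ means that $\phi$ has no $\alpha$-critical (hence no $0$-critical) point with value in $(0,R]$, the Isotopy Lemma already shows that all the sublevel sets $\phi^\eta$, $\eta\in(0,R)$, are isotopic, and in particular pairwise homotopy equivalent; so it suffices to prove that $\psi^r$ is homotopy equivalent to a single $\phi^{\eta_0}$ with $\eta_0\in(0,R)$.

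The technical heart, and the step I expect to be the main obstacle, is a quantitative comparison of the (generalized) gradients of two close distance-like functions. Exploiting the $1$-semiconcavity of $\phi^2$ and $\psi^2$, one establishes an inequality of the form $\|\grad_x\psi\|^2\gtrsim\|\grad_x\phi\|^2-c\,\eps/\psi(x)$, where the precise constant is exactly what produces the threshold $4\eps/\alpha^2$. Combined with the $\alpha$-reach hypothesis (which forces $\|\grad_x\phi\|>\alpha$ as soon as $0<\phi(x)\le R$), this shows that $\psi$ has \emph{no} $0$-critical value in the interval $[4\eps/\alpha^2,\,R-\eps]$: for $\psi(x)$ in this range one checks $0<\phi(x)\le R$ (using $\phi(x)\in[\psi(x)-\eps,\psi(x)+\eps]$ and $4/\alpha^2>1$), so $\|\grad_x\phi\|>\alpha$, and the displayed inequality then makes $\|\grad_x\psi\|$ strictly positive. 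The genuinely delicate points here are those attached to the non-smooth gradient and its integrated flow — existence, semicontinuity, and the length/value estimates of Lieutier and Petrunin — rather than the combinatorics of the argument.

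Once $\psi$ is known to be free of critical values on $[4\eps/\alpha^2,R-\eps]$, a second application of the Isotopy Lemma, now to $\psi$, turns every inclusion $\psi^{s}\hookrightarrow\psi^{s'}$ with $4\eps/\alpha^2\le s\le s'\le R-\eps$ into a homotopy equivalence (indeed a deformation retraction furnished by the flow). I would then restrict attention to the four spaces $\phi^{r-\eps}\hookrightarrow\psi^{r}\hookrightarrow\phi^{r+\eps}\hookrightarrow\psi^{r+2\eps}$ and observe that the two ``skip-one'' composites $\phi^{r-\eps}\hookrightarrow\phi^{r+\eps}$ and $\psi^{r}\hookrightarrow\psi^{r+2\eps}$ are homotopy equivalences: the first by the Isotopy Lemma for $\phi$ (valid since $[r-\eps,r+\eps]\subseteq(0,R)$), the second by the Isotopy Lemma for $\psi$ (valid since $[r,r+2\eps]\subseteq[4\eps/\alpha^2,R-\eps]$). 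By the two-out-of-six property of homotopy equivalences — or, equivalently, a direct section–retraction argument across the interleaving — it follows that the middle map $\psi^{r}\hookrightarrow\phi^{r+\eps}$ is itself a homotopy equivalence, whence $\psi^r\simeq\phi^{r+\eps}\simeq\phi^\eta$ for every $\eta\in(0,R)$.

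Finally I would verify that the constraints used above are precisely the stated hypotheses: the conditions $r-\eps>0$, $r+\eps<R$, $r\ge 4\eps/\alpha^2$ and $r+2\eps\le R-\eps$ hold exactly when $r\in[4\eps/\alpha^2,\,R-3\eps]$, and this interval is non-empty exactly when $\eps$ is small relative to $R$, which is the role of the assumption $\eps\le R/(5+4/\alpha^2)$ (the slack between $3+4/\alpha^2$ and $5+4/\alpha^2$ absorbing the margins needed to keep the gradient-comparison estimate strict). The only genuinely hard ingredient is the gradient-comparison lemma of the second paragraph; everything else is bookkeeping with the Isotopy Lemma together with a formal interleaving argument.
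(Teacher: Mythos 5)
Your proposal cannot be checked against a proof in the survey itself: the paper states this theorem as quoted from \cite{chazal2011geometric} and gives no argument. Compared with the proof in that cited source, your scaffolding is the right one — the interleaving chain $\phi^{r-\eps}\subseteq\psi^r\subseteq\phi^{r+\eps}\subseteq\psi^{r+2\eps}$, the Isotopy Lemma applied to both the $\phi$-tower and the $\psi$-tower, and the two-out-of-six (section--retraction) argument promoting the middle inclusion to a homotopy equivalence are exactly the ingredients used there. The gap is in the step you yourself single out as the technical heart.

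The pointwise gradient comparison $\|\grad_x\psi\|^2\gtrsim\|\grad_x\phi\|^2-c\,\eps/\psi(x)$ is false for every constant $c$, and no same-point statement of this kind can hold. Counterexample: fix $x_0\in\R^d$ and $L>0$, let $\psi=d_{K_\psi}$ where $K_\psi$ is the sphere of radius $L$ centered at $x_0$, and let $\phi=d_{K_\phi}$ where $K_\phi=K_\psi\cup\{p\}$ with $\|p-x_0\|=L-\eps$. Both are distance functions to compact sets, hence distance-like, and $\|\phi-\psi\|_\infty=\eps$: indeed $K_\psi\subset K_\phi$ gives $\phi\leq\psi$, while if the nearest point of $K_\phi$ to $y$ is $p$ then $\psi(y)\leq\psi(p)+\|y-p\|=\eps+\phi(y)$. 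Yet at the single point $x_0$ one has $\psi(x_0)=L$ and $\|\grad_{x_0}\psi\|=0$ (the nearest points of $K_\psi$ surround $x_0$), whereas $\|\grad_{x_0}\phi\|=1$ ($p$ is the unique nearest point). Taking $L/\eps$ large makes your inequality read $0\geq 1-c\eps/L$, which fails. The reason is structural: the generalized gradient of a distance-like function depends only on the exact set of nearest points and is merely lower semicontinuous, so it can collapse to $0$ at a point while an $\eps$-close function has gradient $1$ at that very point. The statement that is true, and that \cite{chazal2011geometric} proves and uses (their critical point stability theorem), concerns \emph{nearby} points: if $x$ is a critical point of $\psi$, then $\phi$ admits a $2\sqrt{\eps/\psi(x)}$-critical point $y$ with $\|x-y\|\leq 2\sqrt{\eps\,\psi(x)}$ — in the example above, $\phi$ indeed has a genuine critical point at distance $\eps/2$ from $x_0$. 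Your threshold $4\eps/\alpha^2$ survives this correction (it is exactly what forces $2\sqrt{\eps/\psi(x)}\leq\alpha$), but the rest of your second paragraph must be redone: the almost-critical point of $\phi$ lies at distance up to $2\sqrt{\eps\psi(x)}\approx\alpha\psi(x)/2$ from $x$, a quantity proportional to $\psi(x)$ rather than to $\eps$, so one must additionally control (via the $1$-Lipschitz property of $\phi$) that its value falls in $(0,R]$ before the hypothesis $\reach_\alpha(\phi)\geq R$ yields a contradiction; this value bookkeeping, not the strictness of a gradient estimate, is what consumes the slack between $3+4/\alpha^2$ and $5+4/\alpha^2$. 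With the lemma replaced by its correct nearby-point form, your interleaving and two-out-of-six argument go through as in the cited paper.
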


Under similar but slightly more technical conditions the Reconstruction Theorem can be extended to prove that the sublevel sets are indeed homeomorphic and even isotopic \cite{chazal2008nca,chazal2008scm}.

Coming back to our setting, and taking for $\phi = d_M$ and $\psi = d_{\X_n}$ the distance functions to the support $M$ of the measure $\mu$ and to the data set $\X_n$, the condition $\reach_\alpha (d_M) \geq R$ can be interpreted as regularity condition on $M$\footnote{As an example, if $M$ is a smooth compact submanifold then $\reach_0 (\phi)$ is always positive and known as the reach of $M$ \cite{federer1959curvature}.}. The Reconstruction Theorem combined with the Nerve Theorem tell that, for well-chosen values of $r,\eta$, the $\eta$-offsets of $M$ are homotopy equivalent to the nerve of the union of balls of radius $r$ centered on $\X_n$, i.e the Cech complex $\Cech_r(\X_n)$. 


From a statistical perspective, the main advantage of these results involving Hausdorff distance is that the estimation of the considered topological quantities boil down to support estimation questions that have been widely studied - see Section \ref{sec:stat-hom-inference}. 

\subsection{Homology inference} \label{sec:homology-inference}

The above results provide a mathematically well-founded framework to infer the topology of shapes from a simplicial complex built on top of an approximating finite sample. However, from a more practical perspective it raises raise two issues. First, the Reconstruction Theorem requires a regularity assumption through the $\alpha$-reach condition that may not always be satisfied and, the choice of a radius $r$ for the ball used to build the \v Cech complex $\Cech_r(\X_n)$. Second, $\Cech_r(\X_n)$ provides a topologically faithfull summary of the data, through a simplicial complex that is usually not well-suited for further data processing. 
One often needs easier to handle topological descriptors, in particular numerical ones, that can be easily computed from the complex. This second issue is addressed by considering the homology of the considered simplicial complexes in the next paragraph, while the first issue will be addressed in the next section with the introduction of persistent homology. 

\paragraph{Homology in a nutshell.}
Homology is a classical concept in algebraic topology providing a powerful tool to formalize and handle the notion of topological features of a topological space or of a simplicial complex in an algebraic way. For any dimension $k$, the $k$-dimensional ``holes'' are represented by a vector space $H_k$ whose dimension is intuitively the number of such independent features.  For example the 0-dimensional homology group $H_0$ represents the 
connected components of the complex, the $1$-dimensional homology group $H_1$ represents the 1-dimensional loops, the $2$-dimensional homology group $H_2$ represents the 2-dimensional cavities,... 

To avoid technical subtleties and difficulties, we restrict the introduction of homology to the minimum that is necessary to understand its usage in the following of the paper. In particular we restrict to homology with coefficients in  $\Z_2$, i.e. the field with two elements $0$ and $1$ such that $1+1 = 0$, that turns out to be geometrically a little bit more intuitive. However, all the notions and results presented in the sequel naturally extend to homology with coefficient in any field. We refer the reader to \cite{h-at-01} for a complete and comprehensible introduction to homology and to \cite{ghristhomological} for a recent concise and very good introduction to applied algebraic topology and its connections to data analysis. 

Let $K$ be a (finite) simplicial complex and let $k$ be a non negative integer. The {\em space of $k$-chains} on $K$, $C_k(K)$ is the set whose elements are the formal (finite) sums of $k$-simplices of $K$. More precisely, if $\{ \sigma_1, \cdots \sigma_p \}$ is the set of $k$-simplices of $K$, then any $k$-chain can be written as 
\begin{equation*}
c = \sum_{i=1}^p \eps_i \sigma_i \ \ {\rm with} \ \ \eps_i \in \Z_2.
\end{equation*}
If $c' = \sum_{i=1}^p \eps_i' \sigma_i$ is another $k$-chain and $\lambda \in \Z_2$, the sum $c+c'$ is defined as $c+c' = \sum_{i=1}^p (\eps_i + \eps_i') \sigma_i$ and the product $\lambda.c$ is defined as $\lambda.c = \sum_{i=1}^p (\lambda.\eps_i) \sigma_i$, making $C_k(K)$ a vector space with coefficients in $\Z_2$. Since we are considering coefficient in $\Z_2$, geometrically a $k$-chain can be seen as a finite collection of $k$-simplices and the sum of two $k$-chains as the symmetric difference of the two corresponding collections\footnote{Recall that the symmetric difference of two sets $A$ and $B$ is the set $A \Delta B = (A \setminus B) \cup (B \setminus A)$.}. 

The {\em boundary} of a $k$-simplex $\sigma = [v_0, \cdots, v_k]$ is the $(k-1)$-chain 
\begin{equation*}
\partial_k (\sigma) = \sum_{i=0}^k  (-1)^i [v_0, \cdots, \hat v_i, \cdots, v_k]
\end{equation*}
where $[v_0, \cdots, \hat v_i, \cdots, v_k]$ is the $(k-1)$-simplex spanned by all the vertices except $v_i$ \footnote{Notice that as we are considering coefficients in $\Z_2$, here $-1 = 1$ and thus $(-1)^i = 1$ for any $i$.}. As the $k$-simplices form a basis of $C_k(K)$, $\partial_k$ extends as a linear map from $C_k(K)$ to $C_{k-1}(K)$ called the {\em boundary operator}. 
The kernel $Z_k(K) = \{ c \in C_k(K) : \partial_k = 0 \}$ of $\partial_k$ is called the {\em space of $k$-cycles of $K$} and the image $B_k(K) = \{ c \in C_k(K) : \exists c' \in C_{k+1}(K), \partial_{k+1}(c') = c \}$ of $\partial_{k+1}$ is called the {\em space of $k$-boundaries} of $K$. 
The boundary operators satisfy the fundamental following property: 
\begin{equation*}
\partial_{k-1} \circ \partial_k \equiv 0 \ \ \mbox{for any} \ \  k \geq 1. 
\end{equation*}
In other words, any $k$-boundary is a $k$-cycle, i.e. $B_k(K) \subseteq Z_k(K) \subseteq C_k(K)$. These notions are illustrated on Figure \ref{fig:CyclesBoundaries}.

\begin{figure}[h]
	\centering
		\includegraphics[width = 0.8 \columnwidth]{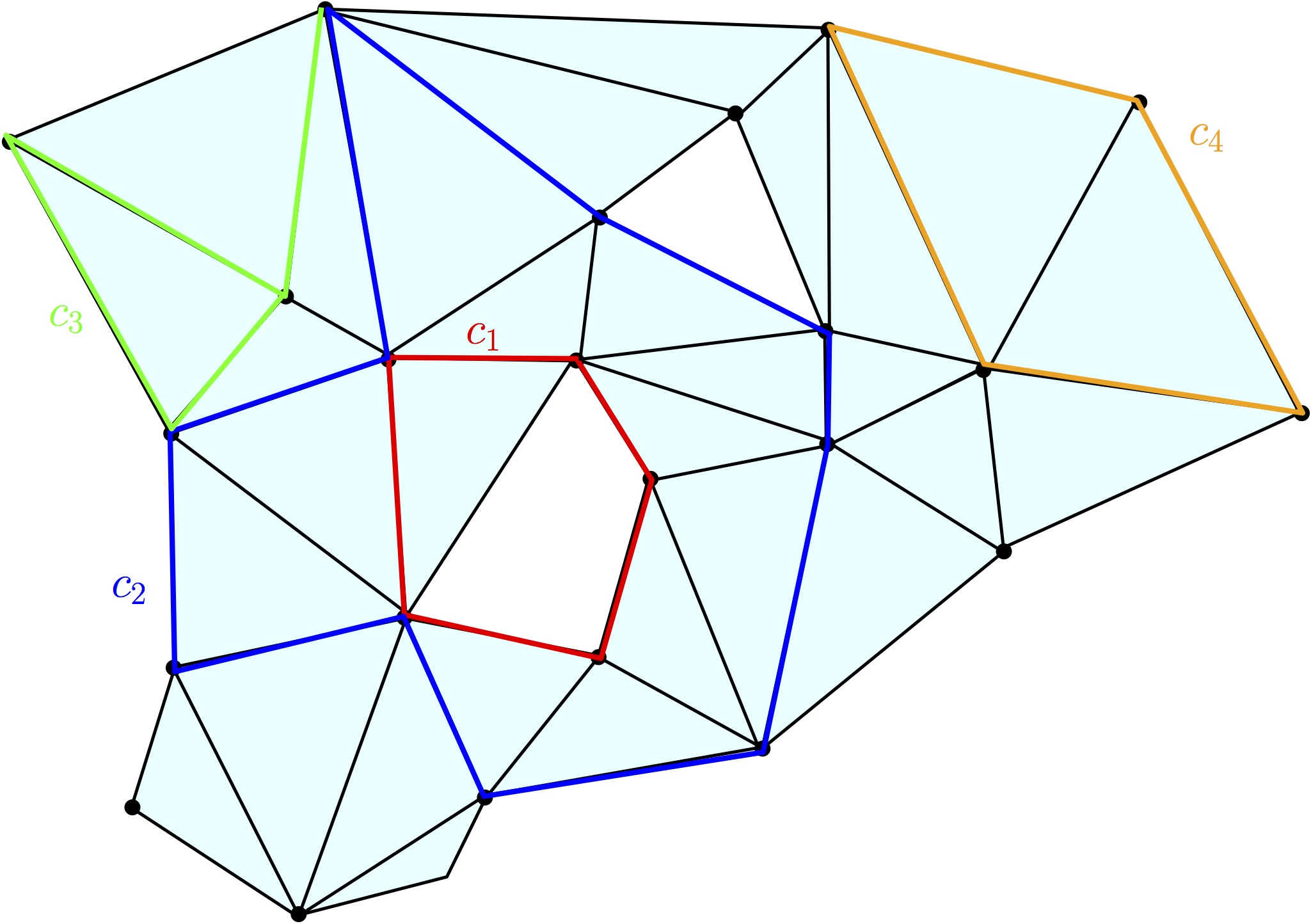} 
		\caption{Some examples of chains, cycles and boundaries on a $2$-dimensional complex $K$: $c_1, c_2$ and $c_4$ are $1$-cycles; $c_3$ si a $1$-chain but not a $1$-cycle; $c_4$ is the $1$-boundary, namely the boundary of the $2$-chain obtained as the sum of the two triangles surrounded by $c_4$; The cycles $c_1$ and $c_2$ span the same element in $H_1(K)$ as their difference is the $2$-chain represented by the union of the triangles surrounded by the union of $c_1$ and $c_2$.}
	\label{fig:CyclesBoundaries}
\end{figure}

\begin{definition}[Simplicial homology group and Betti numbers]
The $k^{th}$ (simplicial) homology group of $K$ is the quotient vector space
\begin{equation*}
H_k(K) = Z_k(K) / B_k(K). 
\end{equation*}
The $k^{th}$ Betti number of $K$ is the dimension $\beta_k(K) = \dim H_k(K)$ of the vector space $H_k(K)$. 
\end{definition}

Two cycles $c,c' \in Z_k(K)$ are said to be {\em homologous} if they differ by a boundary, i.e. is there exists a $(k+1)$-chain $d$ such that $c' = c + \partial_{k+1}(d)$. Two such cycles give rise to the same element of $H_k$. In other words, the elements of $H_k(K)$ are the equivalence classes of homologous cycles. 

Simplicial homology groups and Betti numbers are topological invariants: if $K,K'$ are two simplicial complexes whose geometric realizations are homotopy equivalent, then their homology groups are isomorphic and their Betti numbers are the same. 

\begin{figure}[h]
	\centering
		\includegraphics[width = 0.8 \columnwidth]{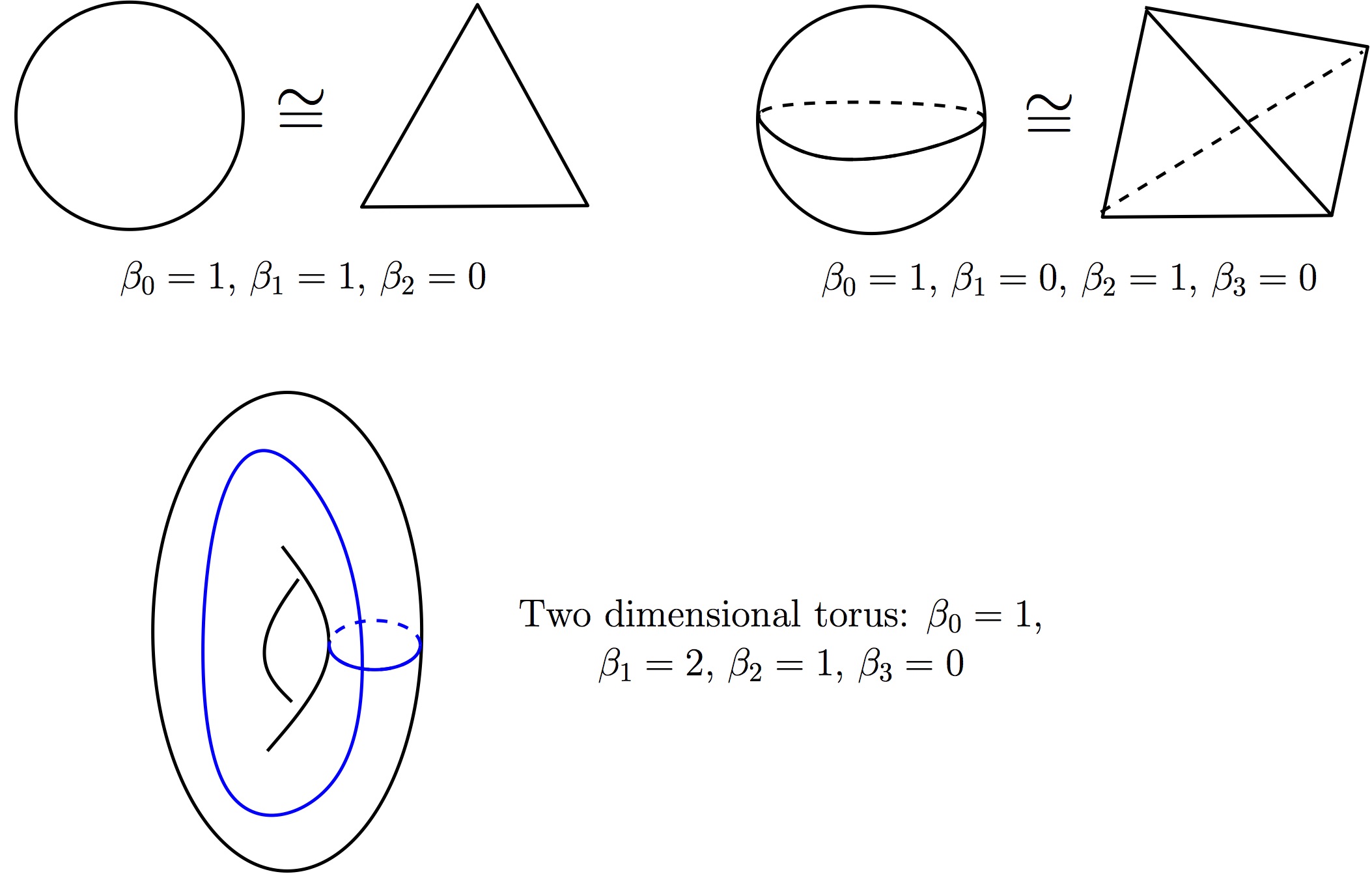} 
		\caption{The Betti numbers of the circle (top left), the 2-dimensional sphere (top right) and the 2-dimensional torus (bottom). The blue curves on the torus represent two independent cycles whose homology class is a basis of its 1-dimensional homology group.}
	\label{fig:ExampleBettiNb}
\end{figure}

{\em Singular homology} is another notion of homology that allows to consider larger classes of topological spaces. It is defined for any topological space $X$ similarly to simplicial homology except that the notion of simplex is replaced by the notion of {\em singular simplex} which is just any continuous map $\sigma : \Delta_k \to X$ where $\Delta_k$ is the standard $k$-dimensional simplex. The space of $k$-chains is the vector space spanned by the $k$-dimensional singular simplices and the boundary of a simplex $\sigma$ is defined as the (alternated) sum of the restriction of $\sigma$ to the $(k-1)$-dimensional faces of $\Delta_k$. 
A remarkable fact about singular homology it that it coincides with simplicial homology whenever $X$ is homeomorphic to the geometric realization of a simplicial complex.
This allows us, in the sequel of this paper, to indifferently talk about simplicial or singular homology for topological spaces and simplicial complexes. 

Observing, that if $f  : X \to Y$ is a continuous map, then for any singular simplex $\sigma : \Delta_k \to X$ in $X$, $f \circ \sigma : \Delta_k \to Y$ is a singular simplex in $Y$, one easily deduces that continuous maps between topological spaces canonically induce homomorphisms between their homology groups.
In particular, if $f$ is an homeomorphism or an homotopy equivalence, then it induces an isomorphism between $H_k(X)$ and $H_k(Y)$ for any non negative integer $k$. As an example, it follows from the Nerve Theorem that for any set of points $X \subset \R^d$ and any $r >0$ the $r$-offset $X^r$ and the \v Cech complex $\Cech_r(X)$ have isomorphic homology groups and the same Betti numbers. 

As a consequence, the Reconstruction Theorem \ref{thm:distance-like-reconstruction} leads to the following result on the estimation of Betti numbers. 

\begin{theorem} \label{thm:betti-inference}
Let $M \subset \R^d$ be a compact set such that $\reach_\alpha (d_M) \geq R >0$ for some $\alpha \in (0,1)$ and let $\X$ be a finite set of point such that $d_H(M,\X) = \eps < \frac{R}{5 + 4/\alpha^2}$. Then, for every $r \in [4\eps/\alpha^2, R - 3\eps]$ and every $\eta\in (0,R)$,  the Betti numbers of $\Cech_r(\X)$ are the same as the ones of $M^\eta$. \\
In particular, if $M$ is a smooth $m$-dimensional submanifold of $\R^d$, then $\beta_k(\Cech_r(\X)) = \beta_k(M)$ for any $k=0, \cdots, m$. 
\end{theorem}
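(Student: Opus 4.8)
The plan is to chain together three facts already available in the excerpt: the Reconstruction Theorem \ref{thm:distance-like-reconstruction}, the Nerve Theorem \ref{thm:nerve}, and the homotopy invariance of Betti numbers. First I would specialize the Reconstruction Theorem to the two distance functions $\phi = d_M$ and $\psi = d_\X$. Both are distance-like: each is proper, and the squared distance to a compact set is $1$-semiconcave, so $x \mapsto \|x\|^2 - d_K^2(x)$ is convex for $K \in \{M,\X\}$. By the definition of the Hausdorff distance recalled above, $\|\phi - \psi\|_\infty = \|d_M - d_\X\|_\infty = d_H(M,\X) = \eps$, so the separation hypothesis of the Reconstruction Theorem is precisely the assumed bound $\eps < \frac{R}{5 + 4/\alpha^2}$.

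Since $\reach_\alpha(\phi) = \reach_\alpha(d_M) \geq R$, the Reconstruction Theorem then yields, for every $r \in [4\eps/\alpha^2, R - 3\eps]$ and every $\eta \in (0,R)$, a homotopy equivalence between $\psi^r$ and $\phi^\eta$. Here $\psi^r = d_\X^{-1}([0,r]) = \bigcup_{x\in\X} B(x,r)$ is the $r$-offset of the point cloud and $\phi^\eta = M^\eta$ is the $\eta$-offset of $M$. Next I would apply the Nerve Theorem to the cover $(B(x,r))_{x\in\X}$: the balls and all their finite intersections are convex, hence contractible, so its nerve, which is exactly $\Cech_r(\X)$, is homotopy equivalent to $\bigcup_{x\in\X}B(x,r)$. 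Composing the two homotopy equivalences gives $\Cech_r(\X) \simeq M^\eta$, and since homotopy equivalent spaces have isomorphic homology groups and hence equal Betti numbers, $\beta_k(\Cech_r(\X)) = \beta_k(M^\eta)$ for all $k$, which is the first assertion.

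For the \emph{in particular} statement, I would first observe that the $\alpha$-reach is non-increasing in $\alpha$ (an $\alpha_1$-critical point is $\alpha_2$-critical for $\alpha_1<\alpha_2$), so $\reach_\alpha(d_M) \leq \reach_0(d_M) = \reach(M)$, giving $\reach(M) \geq R$. Consequently every admissible $\eta \in (0,R)$ automatically satisfies $\eta < \reach(M)$, so the two quantifications are compatible with no extra work. For a smooth compact $m$-submanifold and such $\eta$, the nearest-point projection $\pi$ onto $M$ is well defined and continuous on $M^\eta$, and the straight-line homotopy $H(x,t) = (1-t)x + t\,\pi(x)$ stays in $M^\eta$ and deformation retracts it onto $M$; hence $\beta_k(M^\eta) = \beta_k(M)$, and since $\beta_k(M) = 0$ for $k>m$ this yields $\beta_k(\Cech_r(\X)) = \beta_k(M)$ for $k = 0,\dots,m$. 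The main obstacle is not any single step but the bookkeeping: I must check that $[4\eps/\alpha^2, R - 3\eps]$ is nonempty (it is, since $\eps(3+4/\alpha^2) \leq \eps(5+4/\alpha^2) \leq R$, so an admissible $r$ exists), and the most genuinely technical point is the deformation-retraction claim, whose rigorous justification rests on the tubular neighborhood theorem and the absence of critical values of $d_M$ strictly inside the reach.
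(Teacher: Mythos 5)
Your proof is correct and follows exactly the route the paper intends: it presents Theorem \ref{thm:betti-inference} as an immediate consequence of the Reconstruction Theorem \ref{thm:distance-like-reconstruction} applied to $\phi = d_M$, $\psi = d_\X$, combined with the Nerve Theorem identifying $\Cech_r(\X)$ with the offset $\X^r$ up to homotopy, and the homotopy invariance of homology. Your additional bookkeeping (nonemptiness of the interval, monotonicity of $\alpha \mapsto \reach_\alpha$, and the deformation retraction of $M^\eta$ onto $M$ within the reach) correctly fills in details the paper leaves implicit, notably for the \emph{in particular} clause.
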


From a practical perspective, this result raises three difficulties: first, the regularity assumption involving the $\alpha$-reach of $M$ may be too restrictive; second, the computation of the nerve of an union of balls requires they use of a tricky predicate testing the emptiness of a finite union of balls; third the estimation of the Betti numbers relies on the scale parameter $r$ whose choice may be a problem.  

To overcome these issues, \cite{co-tpbre-08} establishes the following result that offers a solution to the two first problems. 

\begin{theorem} \label{thm:betti-rips-inference}
Let $\M \subset \R^d$ be a compact set such that $\wfs(M) = \wfs_{d_M}(0) \geq R > 0$ and let $\X$ be a finite set of point such that $d_H(M,\X) = \eps < \frac{1}{9} \wfs(M)$. Then for any $r \in [2\eps , \frac{1}{4} ( \wfs(M) - \eps ) ]$ and any $\eta\in (0,R)$, 
$$\beta_k(X^\eta) = \mbox{\rm rk} \left ( H_k(\Rips_r(\X)) \to H_k(\Rips_{4r}(\X)) \right )$$
where rk$\left ( H_k(\Rips_r(\X)) \to H_k(\Rips_{4r}(\X)) \right )$ denotes the rank of the homomorphism induced by the (continuous) canonical inclusion $\Rips_r(\X) \hookrightarrow \Rips_{4r}(\X)$. 
\end{theorem}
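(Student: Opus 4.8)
The plan is to deduce the identity from the Rips--Čech interleaving $\Rips_\alpha(\X)\subseteq\Cech_\alpha(\X)\subseteq\Rips_{2\alpha}(\X)$, the Nerve Theorem, and the behaviour of the offsets of $M$ below the weak feature size, pinning down the rank by a two-sided squeeze. Throughout, homology is taken with $\Z_2$ coefficients and every space involved has finite-dimensional homology. Since $d_H(M,\X)=\eps$, one has for every $\alpha\ge 0$ the elementary offset inclusions $\X^\alpha\subseteq M^{\alpha+\eps}$ and $M^\alpha\subseteq\X^{\alpha+\eps}$. Moreover, by the Nerve Theorem, $\Cech_\alpha(\X)\simeq\X^\alpha$ through homotopy equivalences that are natural with respect to inclusions, so that the map $H_k(\Cech_\alpha(\X))\to H_k(\Cech_\beta(\X))$ is identified with the map $H_k(\X^\alpha)\to H_k(\X^\beta)$ induced by $\X^\alpha\hookrightarrow\X^\beta$. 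Finally, since $\wfs(M)\ge R$, the distance $d_M$ has no critical value in $(0,\wfs(M))$, so by the Isotopy Lemma all the offsets $M^s$, $0<s<\wfs(M)$, are isotopic; in particular $M^{s}\hookrightarrow M^{s'}$ induces an isomorphism on $H_k$ whenever $0<s\le s'<\wfs(M)$, and $\beta_k(M^\eta)$ does not depend on $\eta\in(0,R)$.

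The crux is a rank computation that I would isolate as a lemma. Suppose one has a chain of inclusions $M^{s_1}\subseteq\X^{t}\subseteq M^{s_2}\subseteq\X^{t'}\subseteq M^{s_3}$ with $0<s_1\le s_2\le s_3<\wfs(M)$. Then the map $\psi:H_k(\X^t)\to H_k(\X^{t'})$ factors as $H_k(\X^t)\xrightarrow{v}H_k(M^{s_2})\xrightarrow{d}H_k(\X^{t'})$. Because $d_M$ has no critical value in $(0,\wfs(M))$, the composite $H_k(M^{s_1})\to H_k(\X^t)\xrightarrow{v}H_k(M^{s_2})$ is the isomorphism induced by $M^{s_1}\hookrightarrow M^{s_2}$, so $v$ is surjective; likewise $H_k(M^{s_2})\xrightarrow{d}H_k(\X^{t'})\to H_k(M^{s_3})$ is the isomorphism induced by $M^{s_2}\hookrightarrow M^{s_3}$, so $d$ is injective. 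Hence $\mathrm{rk}(\psi)=\mathrm{rk}(d\circ v)=\dim H_k(M^{s_2})=\beta_k(M^\eta)$.

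I would then apply the lemma twice. With $s_1=r-\eps$, $t=r$, $s_2=r+\eps$, $t'=2r$, $s_3=2r+\eps$ — whose inclusions and the bounds $0<r-\eps$ and $2r+\eps<\wfs(M)$ follow from $2\eps\le r$ and $4r\le\wfs(M)-\eps$ — it gives $\mathrm{rk}\bigl(H_k(\Cech_r(\X))\to H_k(\Cech_{2r}(\X))\bigr)=\beta_k(M^\eta)$. With $s_1=r/2-\eps$, $t=r/2$, $s_2=r/2+\eps$, $t'=4r$, $s_3=4r+\eps$ it gives $\mathrm{rk}\bigl(H_k(\Cech_{r/2}(\X))\to H_k(\Cech_{4r}(\X))\bigr)=\beta_k(M^\eta)$. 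It remains to squeeze $\Phi:H_k(\Rips_r(\X))\to H_k(\Rips_{4r}(\X))$ between these using the interleaving
\[
\Cech_{r/2}(\X)\subseteq\Rips_r(\X)\subseteq\Cech_r(\X)\subseteq\Rips_{2r}(\X)\subseteq\Cech_{2r}(\X)\subseteq\Rips_{4r}(\X)\subseteq\Cech_{4r}(\X).
\]
On the one hand, $\Phi$ factors with the map $H_k(\Cech_r(\X))\to H_k(\Cech_{2r}(\X))$ in the middle, whence $\mathrm{rk}\,\Phi\le\beta_k(M^\eta)$; on the other hand, the map $H_k(\Cech_{r/2}(\X))\to H_k(\Cech_{4r}(\X))$ factors through $\Phi$, whence $\beta_k(M^\eta)\le\mathrm{rk}\,\Phi$. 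The two inequalities give $\mathrm{rk}\,\Phi=\beta_k(M^\eta)$, as claimed.

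I expect the main obstacle to be twofold. First, one must make rigorous the naturality of the nerve homotopy equivalences, so that the filtration maps between Čech complexes are genuinely identified with the inclusion maps between offsets; the classical Nerve Theorem is typically stated only up to homotopy equivalence, not functorially, so a functorial version is needed here. Second, there is the bookkeeping that keeps every offset radius strictly inside $(0,\wfs(M))$: the radii $s_1=r/2-\eps$ and $s_3=4r+\eps$ of the wider second instance are the binding constraints, and they are exactly why the hypotheses require $2\eps\le r$ and $4r\le\wfs(M)-\eps$, the two endpoints of the admissible range for $r$ being the delicate cases. The rank computation itself, once the surjectivity and injectivity are observed, is elementary.
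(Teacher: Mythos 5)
You should know at the outset that the survey itself contains no proof of this theorem: it is quoted from \cite{co-tpbre-08}, and your strategy is precisely the one used there --- interleave the Rips complexes between \v Cech complexes, identify the \v Cech inclusions with the offset inclusions $\X^t\subseteq\X^{t'}$ via a functorial (persistent) Nerve Theorem, sandwich the offsets of $\X$ between offsets of $M$, and pin down the rank by a two-sided squeeze. Your rank lemma, its two applications and the final squeeze are exactly the standard argument, and the two caveats you flag (functoriality of the nerve equivalences, and the strengthened form of the Isotopy Lemma asserting that inclusions of sublevel sets across a critical-value-free interval induce isomorphisms) are the right ones. For $r$ in the \emph{open} interval $\bigl(2\eps,\frac14(\wfs(M)-\eps)\bigr)$ your proof is complete.

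There is, however, a genuine gap at the two endpoints, which the statement includes: your claim that the bounds $0<s_1$ and $s_3<\wfs(M)$ ``follow from $2\eps\le r$ and $4r\le\wfs(M)-\eps$'' is exactly wrong there. At $r=2\eps$ the second application forces $s_1=r/2-\eps=0$, and you cannot substitute $M^0=M$: the Isotopy Lemma says nothing about the pair $(M,M^{s_2})$, and surjectivity of $v$ would require every class of $H_k(M^{s_2})$ to be carried by a singular cycle lying in $M$ itself --- precisely the singular-versus-\v Cech subtlety that formulating the theorem in terms of $\beta_k(M^\eta)$ rather than $\beta_k(M)$ is designed to avoid. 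This endpoint can be repaired by exploiting the finiteness of $\X$: the distance function $d_\X$ has finitely many critical values, so for small $\delta>0$ the inclusion $\X^{r/2}\subseteq\X^{r/2+\delta}$ induces isomorphisms, and the sandwich $M^{\delta}\subseteq\X^{r/2+\delta}\subseteq M^{2\eps+\delta}\subseteq\X^{4r}\subseteq M^{9\eps}$ is then legitimate since $9\eps<\wfs(M)$ holds strictly.

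The right endpoint $r=\frac14(\wfs(M)-\eps)$ is worse: there $s_3=4r+\eps=\wfs(M)$, and injectivity of $d$ fails because a class can die exactly at the critical value $\wfs(M)$ (for a round circle of radius $\rho$ one has $\wfs=\rho$ and $H_1(M^{s})\to H_1(M^{\rho})$ is the zero map). In fact the conclusion of your second application, $\mathrm{rk}\bigl(H_k(\Cech_{r/2}(\X))\to H_k(\Cech_{4r}(\X))\bigr)=\beta_k(M^\eta)$, is then simply false in general: take $M$ a circle of radius $\rho$, and $\X$ a sufficiently dense sample of $M$ together with three extra points at distance exactly $\eps$ \emph{inside} $M$, spaced $120^\circ$ apart. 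Since $4r=\rho-\eps$, the three balls of radius $4r$ centered at the interior points cover the whole disk of radius $\rho-\eps$, so $\X^{4r}$ is simply connected and the rank above is $0$, whereas $\beta_1(M^\eta)=1$. (One can check that the theorem's conclusion still holds in this example --- the Rips class survives, as seen by radially projecting the three interior vertices onto the circle --- so the example refutes the proof step, not the statement; but no factorization through $H_k(\Cech_{4r}(\X))$ can detect this.) So the lower bound at $r=\frac14(\wfs(M)-\eps)$ needs an argument genuinely different from the \v Cech squeeze, and as written your proof should only claim the open interval.
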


Although this result leaves the question of the choice of the scale parameter $r$ open, it is proven in \cite{co-tpbre-08} that a multiscale strategy whose description is beyond the scope of this paper provides some help to identify the relevant scales at which Theorem \ref{thm:betti-rips-inference} can be applied. 

\subsection{Statistical aspects of Homology inference} \label{sec:stat-hom-inference}

According to the stability results  presented in the previous section, a statistical approach to topological inference is  strongly related  to  the problem of {\it distribution  support estimation} and {\it level sets estimation}  under the Hausdorff metric.  A large number of methods and results are available for estimating  the support of a distribution in statistics. For instance, the Devroye and Wise estimator \citep{DevroyeWise80} defined on a sample $\X_n$ is also a particular offset of $\X_n$. The convergence rates of  both $\X _n$ and  the Devroye and Wise estimator to the support of the distribution for the Hausdorff distance is studied in \citet{CuevasRCasal04} in $\R^d$.  More recently, the minimax rates of convergence of manifold  estimation for the Hausdorff metric, which is particularly relevant for topological inference, has been studied in \citet{GenoveseEtAl2012}. There is also a large literature about level sets estimation in various metrics \citep[see for instance][]{polonik1995measuring,tsybakov1997nonparametric,cadre2006kernel} and more particularly for the Hausdorff metric in \citet{chen2015density}. All these works about support and level sets estimation shine light on the statistical analysis of topological inference procedures.

In the paper \citet{niyogi2008finding}, it is  shown that the homotopy type of Riemannian manifolds with reach larger than a given constant  can be recovered with high probability from offsets of a sample on (or close to) the manifold. This  paper   was probably the first attempt to consider the topological inference problem in terms of probability. The result of \citet{niyogi2008finding} is derived from a retract contraction argument and on tight bounds over the  packing number of the manifold in order to control the Hausdorff distance between the manifold and the observed point cloud. The homology inference  in the noisy case, in the sense the distribution of the observation is concentrated around the manifold, is also studied in~\citet{niyogi2008finding,niyogi2011topological}. 
The assumption that the geometric object is a smooth Riemannian manifold is only used in the paper to control in probability the Hausdorff distance between the sample and the manifold, and is not actually necessary for the "topological part" of the result. Regarding the topological results, these are similar to those of \citet{chazal2009sampling,ChazalLeutier08} in the particular framework of Riemannian manifolds. 
Starting from  the result of \citet{niyogi2008finding}, the minimax rates of convergence of the homology type have been studied by \citet{balakrishnan12minimax} under various models, for Riemannian manifolds with reach larger than a constant.  In contrast,  a statistical version of \citet{chazal2009sampling}  has not yet been proposed.

More recently, following the ideas of \citet{niyogi2008finding}, \citet{bobrowski2014topological} have proposed a robust homology estimator for the level sets of both density and regression functions,  by considering  the inclusion map between nested pairs of estimated level sets (in the spirit of Theorem~\ref{thm:betti-rips-inference} above) obtained with a plug-in approach  from a kernel estimator. 

\begin{figure}[h]
	\centering
		\includegraphics[width = 0.8 \columnwidth]{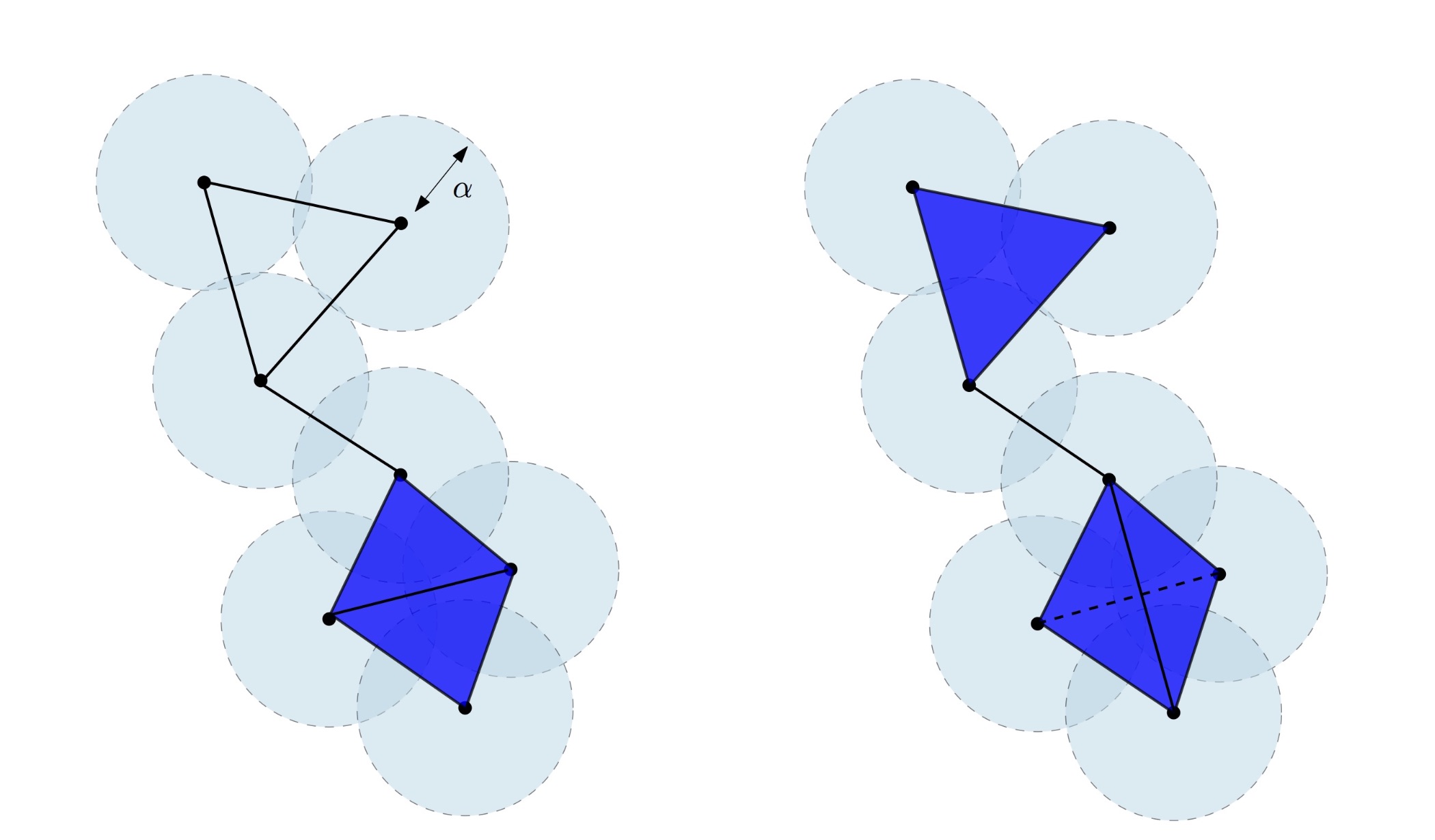}
		\caption{The effect of outliers on the sublevel sets of distance functions. Adding just a few outliers to a point cloud may dramatically change its distance function and the topology of its offsets.}
	\label{fig:outlierDistFn}
\end{figure} 

\subsection{Going beyond Hausdorff distance : distance to measure} \label{subsec:DTM}

It is well known that distance-based methods in \tda\ may fail completely in the presence of outliers. Indeed, adding even a single outlier to the point cloud  can change the distance function dramatically, see Figure \ref{fig:outlierDistFn} for an illustration. To answer this drawback, \citet{chazal2011geometric} have introduced an alternative distance function which is robust to noise,  the {\em distance-to-a-measure}. 

Given a probability distribution $P$ in $\R^d$ and a real parameter $0 \leq u \leq 1$, 
the notion of distance to the support of $P$ may be generalized as the function 
\begin{equation*}
\label{eq:DTMPrelimdef}
\delta_{P, u} : x \in \R^d  \mapsto \inf\{ t > 0 \, ; \,  P(B (x,t) ) \geq  u  \} ,
\end{equation*}
where $B (x,t)$ is the closed Euclidean ball of center $x$ and radius $t$. To avoid issues due to discontinuities of the map $P \to \delta_{P, u}$, the distance-to-measure function (DTM) with parameter $m \in [0,1]$ and power $r \geq 1$ is defined by 
\begin{equation}
\label{eq:DTMdef}
d_{P,m,r}(x)  : x \in \mathcal \R^d  \mapsto  \left(  \frac 1 {m} \int _0 ^{m} \delta_{P,u}^r(x)  d u \right)^{1/r}.
\end{equation}
A nice property of the DTM proved in  \citet{chazal2011geometric} is its stability with respect to perturbations of $P$ in the Wasserstein metric. More precisely, the map $P \to d_{P,m,r}$ is $m^{- \frac 1 r}$-Lipschitz, i.e. if $P$ and $\tilde P$ are two probability distributions on $\R^d$, then
\begin{equation} \label{eq:DTMW}
 \|d_{P,m,r} - d_{\tilde P,m,r} \|_\infty \leq  m^{- \frac 1 r} W_r (P,\tilde P)
\end{equation}
where $W_r$ is the Wasserstein distance for the Euclidean metric on $\R^d$, with exponent $r$\footnote{See \cite{villani2003tot} for a definition of the Wasserstein distance}. 
This property implies that the DTM associated to close distributions in the Wasserstein metric have close sublevel sets. Moreover, when $r=2$, the function $d_{P,m,2}^2$ is semiconcave ensuring strong regularity properties on the geometry of its sublevel sets.  Using these properties, \citet{chazal2011geometric} show that, under general assumptions, if $\tilde P$ is a probability distribution approximating $P$, then the sublevel sets of $d_{\tilde P,m,2}$ provide a topologically correct approximation of the support of $P$. 

In practice, the measure $P$ is usually only known through a finite set of observations  $\X_n = \{ X_1,\dots, X_n\} $ sampled from $P$, raising the question of the approximation of the DTM.
A natural idea to estimate the DTM from $\X_n$ is to plug the empirical measure $P_n$ instead of $P$ in the definition of the DTM. This "plug-in strategy" corresponds to computing the distance to the empirical measure (DTEM). For $m = \frac k n $, the DTEM satisfies
  $$ d^r_{P_n,k/n,r}(x)  := \frac 1 k \sum _{j=1} ^k  {\|x -  \X_n \|}^r_{(j)} \,  ,$$
where $\|x -  \X_n \|_{(j)}$ denotes the distance between $x$ and its $j$-th neighbor in $\{X_1,\dots, X_n\}$.  This
quantity can be easily  computed in practice since it only requires the distances between $x$ and the sample points. The convergence of the DTEM to the DTM has been studied in \cite{cflmrw-rtidt-14} and \cite{cmm-rcrgi-15}. 

The introduction of DTM has motivated further works and applications in various directions such as topological data analysis \citep{buchet2015socg}, GPS traces analysis \citep{ccgjs-ddts-11}, density estimation \citep{biau2011weighted}, hypothesis testing \cite{brecheteau2019statistical}, clustering \citep{chazal2013persistence} just to name a few. Approximations, generalizations and variants of the DTM have also been considered in \citep{guibas2013witnessed,phillips2014goemetric,buchet2015efficient, brecheteau2020k}.

\section{Persistent homology} \label{sec:persistent-homology}
Persistent homology is a powerful tool to compute, study and encode efficiently  multiscale topological features of nested families of simplicial complexes and topological spaces. It does not only provide efficient algorithms to compute the Betti numbers of each complex in the considered families, as required for homology inference in the previous section, but also encodes the evolution of the homology groups of the nested complexes across the scales. 
Ideas and preliminary results underlying persistent homology theory trace back to the 20th century, in particular in \cite{frosini1992measuring, Barannikov1994TheFM, robins1999towards}. It started to know an important development in its modern form after the seminal works of \cite{elz-tps-02} and \cite{zc-cph-05}.

\subsection{Filtrations}


A {\em filtration of a simplicial complex} $K$ is a nested family of subcomplexes $(K_r)_{r \in T}$, where $T \subseteq \R$, such that for any $r, r' \in T$, if $r \leq r'$ then $K_r \subseteq K_{r'}$, and $K = \cup_{r \in T} K_r$. The subset $T$ may be either finite or infinite. More generally, a {\em filtration} of a topological space $\M$ is a nested family of subspaces $(M_r)_{r \in T}$, where $T \subseteq \R$, such that for any $r, r' \in T$, if $r \leq r'$ then $M_r \subseteq M_{r'}$ and, $M = \cup_{r \in T} M_r$. For example, if $f: \M \to \R$ is a function, then the family $M_r = f^{-1}((-\infty, r])$, $r \in \R$ defines a filtration called the sublevel set filtration of $f$. 

In practical situations, the parameter $r \in T$ can often be interpreted as a scale parameter and filtrations classically used in TDA often belong to one of the two following families. 

\paragraph{Filtrations built on top of data.} Given a subset $\X$ of a compact metric space $(M,\rho)$, the families of Rips-Vietoris complexes 
$( \Rips_r(\X) )_{r \in \R}$ and and \v Cech complexes $(\Cech_r(\X))_{r \in \R}$ are filtrations \footnote{we take here the convention that for $r<0$, $\Rips_r(\X) = \Cech_r(\X) = \emptyset$}. Here, the parameter $r$ can be interpreted as a resolution at which one considers the data set $\X$. For example, if $\X$ is a point cloud in $\R^d$, thanks to the Nerve theorem, the filtration $(\Cech_r(\X))_{r \in \R}$ encodes the topology of the whole family of unions of balls $\X^r = \cup_{x \in \X} B(x,r)$, as $r$ goes from $0$ to $+\infty$. As the notion of filtration is quite flexible, many other filtrations have been considered in the literature and can be constructed on top of data, such as e.g. the so called witness complex popularized in \tda\ by  \cite{DeSilva:2004:TEU:2386332.2386359}, the weighted Rips filtrations \cite{buchet2015efficient}, or the so-called DTM-filtrations \cite{anai2020dtm} that allow to handle data corrupted by noise and outliers.

\paragraph{Sublevel sets filtrations.} Functions defined on the vertices of a simplicial complex give rise to another important example of filtration: let $K$ be a simplicial complex with vertex set $V$ and $f: V \to \R$. Then $f$ can be extended to all simplices of $K$ by $f([v_0,\cdots, v_k]) = \max \{f(v_i): i=1,\cdots ,k \}$ for any simplex $\sigma = [v_0,\cdots, v_k] \in K$ and the family of subcomplexes $K_r = \{ \sigma \in K : f(\sigma) \leq r \}$ defines a filtration call the sublevel set filtration of $f$. Similarly, one can define the upperlevel set filtration of $f$.

In practice, even if the index set is infinite, all the considered filtrations are built on finite sets and are indeed finite. For example,  when $\X$ is finite, the Vietoris-Rips complex $\Rips_r(\X)$ changes only at a finite number of indices $r$. This allows to easily handle them from an algorithmic perspective.

\subsection{Starting with a few examples} \label{sec:pers-example}
Given a filtration $\Filt = ( F_r )_{r \in T}$ of a simplicial complex or a topological space, the homology of $F_r$ changes as $r$ increases: new connected components can appear, existing component can merge, loops and cavities can appear or be filled, etc... Persistent homology tracks these changes, identifies the appearing features and associates a life time to them. The resulting information is encoded as a set of intervals called a {\em barcode} or, equivalently, as a multiset of points in $\R^2$ where the coordinate of each point is the starting and end point of the corresponding interval. 

Before giving formal definitions, we introduce and illustrate persistent homology on a few simple examples. 

\paragraph{Example 1.}
Let $f : [0,1] \to \R$ be the function of Figure~\ref{fig:examples_pers_homology}(a) and let $(F_r = f^{-1}((-\infty, r]))_{r \in \R}$ be the sublevel set filtration of $f$.
All the sublevel sets of $f$ are either empty or a union of intervals, so the only non trivial topological information they carry is their $0$-dimensional homology, i.e. their number of connected components. For $r < a_1$, $F_r$ is empty, but at $r=a_1$ a first connected component appears in $F_{a_1}$. Persistent homology thus registers $a_1$ as the birth time of a connected component and start to keep track of it by creating an interval starting at $a_1$. Then, $F_r$ remains connected until $r$ reaches the value $a_2$ where a second connected component appears. Persistent homology starts to keep track of this new connected component by creating a second interval starting at $a_2$. Similarly, when $r$ reaches $a_3$, a new connected component appears and persistent homology creates a new interval starting at $a_3$. When $r$ reaches $a_4$, the two connected components created at $a_1$ and $a_3$ merges together to give a single larger component.
At this step, persistent homology follows the rule that this is the most recently appeared component in the filtration that dies: the interval started at $a_3$ is thus ended at $a_4$ and a first persistence interval encoding the lifespan of the component born at $a_3$ is created. When $r$ reaches $a_5$, as in the previous case, the component born at $a_2$ dies and the persistent interval $(a_2, a_5)$ is created. 
The interval created at $a_1$ remains until the end of the filtration giving rise to the persistent interval $(a_1, a_6)$ if the filtration is stopped at $a_6$, or $(a_1, +\infty)$ if $r$ goes to $+\infty$ (notice that in this later case, the filtration remains constant for $r > a_6$). The obtained set of intervals encoding the span life of the different homological features encountered along the filtration is called the {\em persistence barcode} of $f$. Each interval $(a,a')$ can be represented by the point of coordinates $(a,a')$  in $\R^2$ plane. The resulting set of points is called the {\em persistence diagram} of $f$. Notice that a function may have several copies of the same interval in its persistence barcode. As a consequence, the persistence diagram of $f$ is indeed a multi-set where each point has an integer valued multiplicity. Last, for technical reasons that will become clear in the next section, one adds to the persistence all the points of the diagonal $\Delta = \{ (b,d) : b=d \}$ with an infinite multiplicity.

\begin{figure}
	\centering
		\includegraphics[width = 1 \columnwidth]{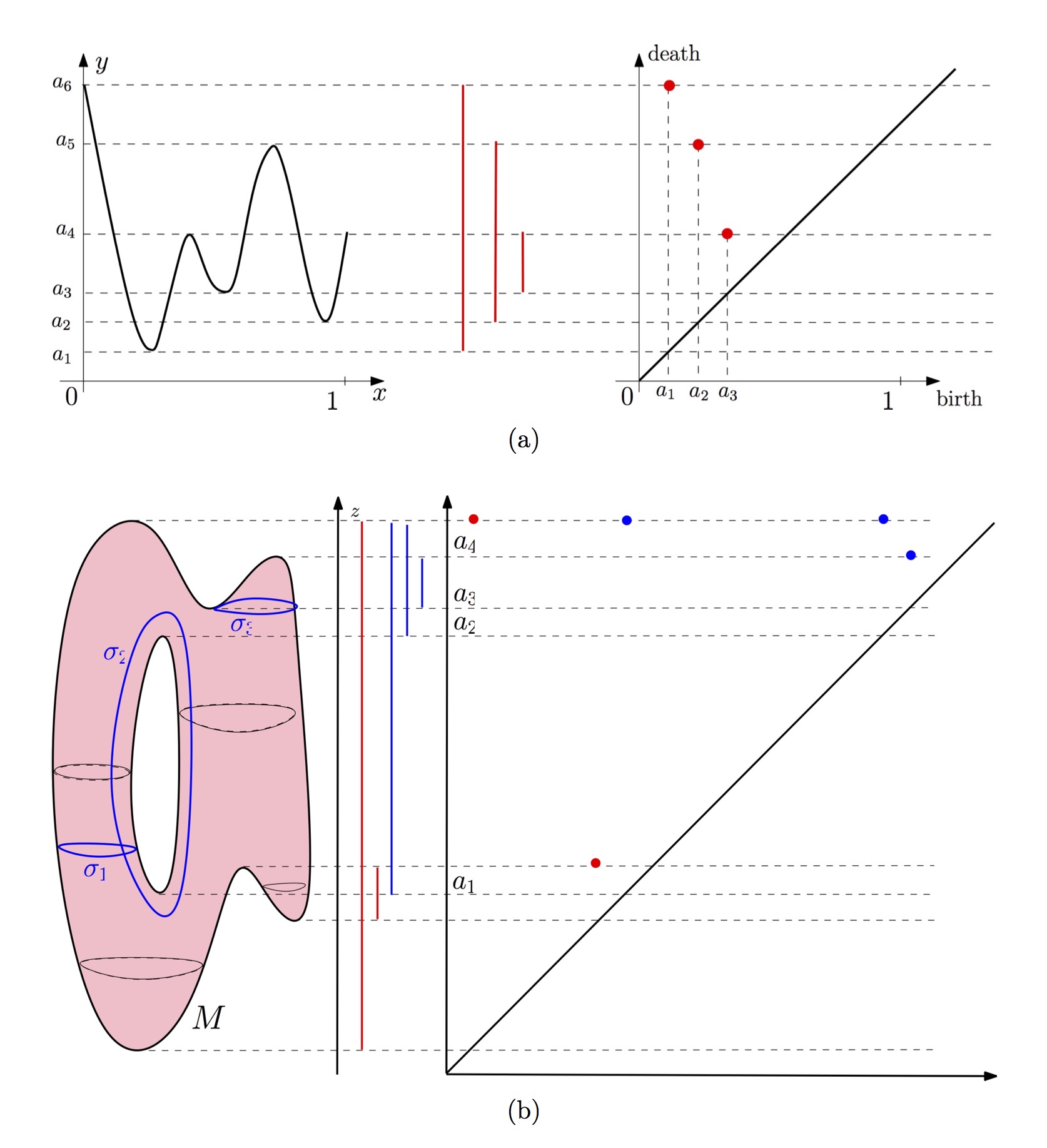} 
		\caption{(a) Example 1: The persistence barcode and the persistence diagram of a function $f : [0,1] \to \R$. (b) Example 2:  the persistence barcode and the persistence diagram of the height function (projection on the $z$-axis) defined on a surface in $\R^3$.}
	\label{fig:examples_pers_homology}
\end{figure} 

\paragraph{Example 2.}
Let now $f : M \to \R$ be the function of Figure \ref{fig:examples_pers_homology}(b) where $M$ is a 2-dimensional surface homeomorphic to a torus, and let $(F_r = f^{-1}((-\infty, r]))_{r \in \R}$ be the sublevel set filtration of $f$. The $0$-dimensional persistent homology is computed as in the previous example, giving rise to the red bars in the barcode. Now, the sublevel sets also carry $1$-dimensional homological features. When $r$ goes through the height $a_1$, the sublevel sets $F_r$ that were homeomorphic to two discs become homeomorphic to the disjoint union of a disc and an annulus, creating a first cycle homologous to $\sigma_1$ on Figure~\ref{fig:examples_pers_homology}(b). A interval (in blue) representing the birth of this new $1$-cycle is thus started at $a_1$. Similarly, when $r$ goes through the height $a_2$ a second cycle, homologous to $\sigma_2$ is created, giving rise to the start of a new persistent interval. These two created cycles are never filled (indeed they span $H_1(M)$) and the corresponding intervals remains until the end of the filtration. When $r$ reaches $a_3$, a new cycle is created that is filled and thus dies at $a_4$, giving rise to the persistence interval $(a_3,a_4)$. 
So, now, the sublevel set filtration of $f$ gives rise to two barcodes, one for $0$-dimensional homology (in red) and one for $1$-dimensional homology (in blue). As previously, these two barcodes can equivalently be represented as diagrams in the plane.

\paragraph{Example 3.}
In this last example we consider the filtration given by a union of growing balls centered on the finite set of points $C$ in Figure \ref{fig:Cech-diag-example}. Notice that this is the sublevel set filtration of the distance function to $C$, and thanks to the Nerve Theorem, this filtration is homotopy equivalent to the \v Cech filtration built on top of $C$. Figure \ref{fig:Cech-diag-example} shows several level sets of the filtration:\\ 
a) For the radius $r=0$, the union of balls is reduced to the initial finite set of point, each of them corresponding to a $0$-dimensional feature, i.e. a connected component; an interval is created for the {\em birth} for each of these features at $r=0$.\\ 
b) Some of the balls started to overlap resulting in the {\em death} of some connected components that get merged together; the persistence diagram keeps track of these deaths, putting an end point to the corresponding intervals as they disappear.\\ 
c) New components have merged giving rise to a single connected component and, so, all the intervals associated to a $0$-dimensional feature have been ended, except the one corresponding to the remaining components; two new $1$-dimensional features, have appeared resulting in two new intervals (in blue) starting at their birth scale.\\ 
d) One of the two 1-dimensional cycles has been filled, resulting in its death in the filtration and the end of the corresponding blue interval.\\ 
e) all the $1$-dimensional features have died, it only remains the long (and never dying) red interval. As in the previous examples, the final barcode can also be equivalently represented as a persistence diagram where every interval $(a,b)$ is represented by the the point of coordinate $(a,b)$ in $\R^2$.
Intuitively the longer is an interval in the barcode or, equivalently the farther from the diagonal is the corresponding point in the diagram, the more persistent, and thus relevant, is the corresponding homological feature across the filtration. Notice also that for a given radius $r$, the $k$-th Betti number of the corresponding union of balls is equal of the number of persistence intervals corresponding to $k$-dimensional homological features and containing $r$. So, the persistence diagram can be seen as a multiscale topological signature encoding the homology of the union of balls for all radii as well as its evolution across the values of $r$. 

\begin{figure} 
	\centering
		\includegraphics[width = 1 \columnwidth]{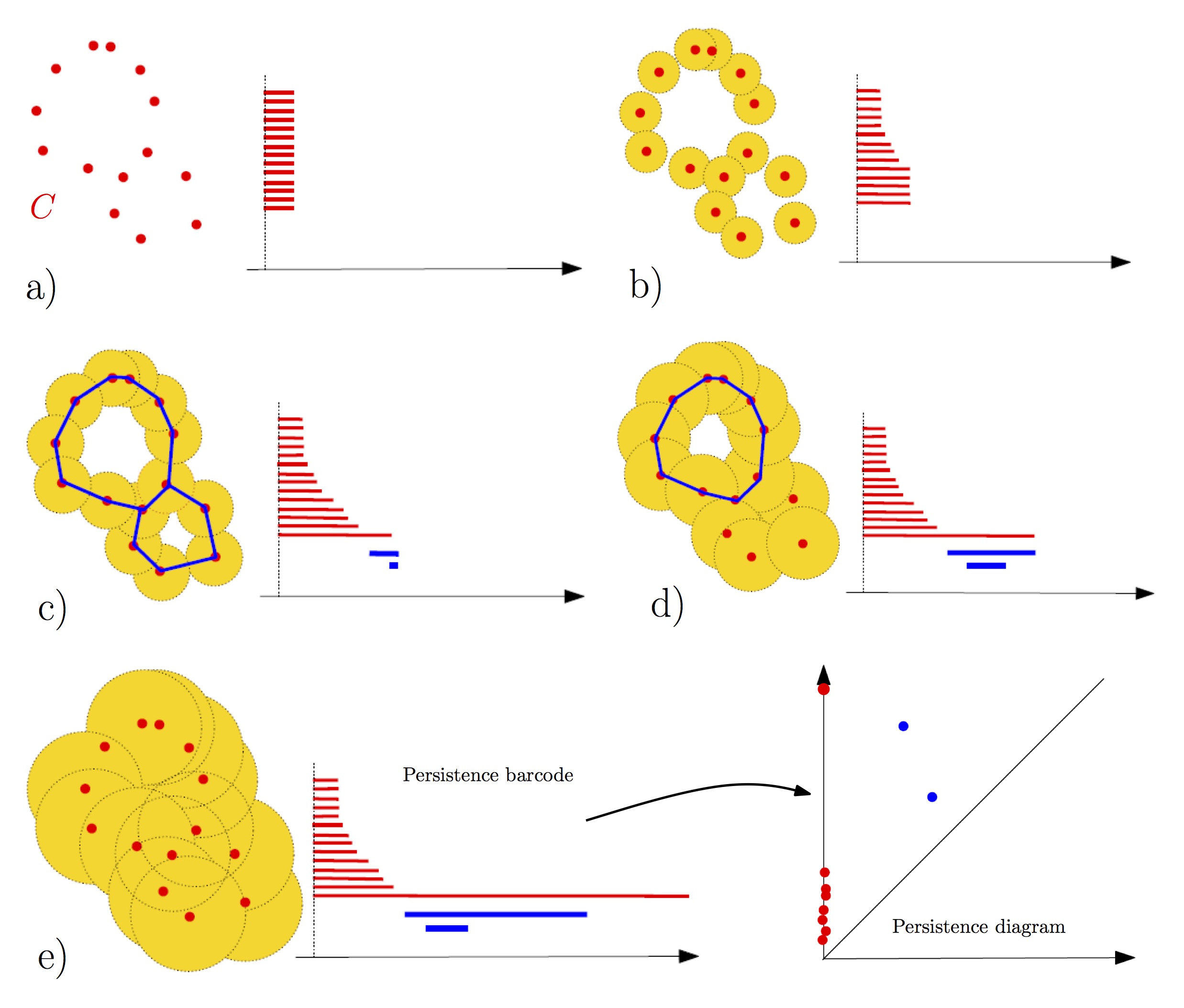}
		\caption{The sublevel set filtration of the distance function to a point cloud and the ``construction'' of its persistence barcode as the radius of balls increases. The blue curves in the unions of balls represent $1$-cycles associated to the blue bars in the barcodes. 
}
	\label{fig:Cech-diag-example}
\end{figure} 

\subsection{Persistent modules and persistence diagrams}


Persistent diagrams can be formally and rigorously defined in a purely algebraic way. This requires some care and we only give here the basic necessary notions, leaving aside technical subtelties and difficulties. We refer the readers interested in a detailed exposition to \cite{chazal2012structure}.  

Let $\Filt = ( F_r )_{r \in T}$ be a filtration of a simplicial complex or a topological space. Given a non negative  integer $k$ and considering the homology groups $H_k(F_r)$ we obtain a sequence of vector spaces where the inclusions $F_r \subset F_{r'}$, $r \leq r'$ induce linear maps between $H_k(F_r)$ and $H_k(F_{r'})$. Such a sequence of vector spaces together with the linear maps connecting them is called a {\em persistence module}. 

\begin{definition}
A {persistence module} $\mathbb{V}$ over a subset $T$ of the real numbers $\R$ is an indexed family of vector spaces
$
(V_r \mid r \in T)
$
and a doubly-indexed family of linear maps 
$
(v_s^r: V_r \to V_s \mid r \leq s)
$
which satisfy the composition law
$
v_t^s \circ v_s^r = v_t^r
$
whenever $r \leq s \leq t$, and where $v_r^r$ is the identity map on $V_r$.
\end{definition}

In many cases, a persistence module can be decomposed into a direct sum of {\em intervals} modules $\mathbb{I}_{(b,d)}$ of the form 
$$
\cdots \rightarrow 0 \rightarrow \cdots \rightarrow 0 \rightarrow \Z_2 \rightarrow \cdots \rightarrow \Z_2 \rightarrow 0 \rightarrow \cdots
$$
where the maps $\Z_2 \to \Z_2$ are identity maps while all the other maps are $0$. Denoting $b$ (resp. $d$) the infimum (resp. supremum) of the interval of indices corresponding to non zero vector spaces, such a module can be interpreted as a feature that appears in the filtration at index $b$ and disappear at index $d$. 
When a persistence module $\mathbb{V}$ can be decomposed as a direct sum of interval modules, one can show that this decomposition is unique up to reordering the intervals (see \cite[Theorem 2.7]{chazal2012structure}). As a consequence, the set of resulting intervals is independent of the decomposition of $\mathbb{V}$ and is called the {\em persistence barcode} of $\mathbb{V}$. As in the examples of the previous section, each interval $(b,d)$ in the barcode can be represented as the point of coordinates $(b,d)$ in the plane $\R^2$.  The disjoint union of these points, together with the diagonale $\Delta = \{x = y \}$ is a multi-set called the the {\em persistence diagram} of $\mathbb{V}$.

The following result, from \cite[Theorem 2.8]{chazal2012structure}, gives some necessary conditions for a persistence module to be decomposable as a direct sum of interval modules. 

\begin{theorem} 
Let $\mathbb{V}$ be a persistence module indexed by $T \subset \R$. If $T$ is a finite set or if all the vector spaces $V_r$ are finite-dimensional, then $\mathbb{V}$ is decomposable as a direct sum of interval modules. 
Moreover, for any $s, t \in T$, $ s \leq t$, the number $\beta_{t}^s$ of intervals starting before $s$ and ending after $t$ is equal to the rank of the linear map $v_t^s$ and is called the $(s,t)$-persistent Betti number of the filtration.   
\end{theorem}

As both conditions above are satisfied for the persistent homology of filtrations of finite simplicial complexes, an immediate consequence of this result is that the persistence diagrams of such filtrations are always well-defined.

Indeed, it is possible to show that persistence diagrams can be defined as soon as the following simple condition is satisfied. 

\begin{definition}
A persistence module $\mathbb{V}$ indexed by $T \subset \R$ is {\em q-tame} if for any $r < s$ in $T$, the rank of the linear map $v_s^r : V_r \to V_s$ is finite.
\end{definition}

\begin{theorem}[\cite{ccggo-ppmtd-09,chazal2012structure}]
If $\mathbb{V}$ is a q-tame persistence module, then it has a well-defined persistence diagram. 
Such a persistence diagram $\dgm(\V)$ is the union of the points of the diagonal $\Delta$ of $\R^2$, counted with infinite multiplicity, and a multi-set above the diagonal in $\R^2$ that is locally finite. Here, by locally finite we mean that for any rectangle $R$ with sides parallel to the coordinate axes that does not intersect $\Delta$, the number of points of $\dgm(\V)$, counted with multiplicity, contained in $R$ is finite. Also, the part of the diagram made of the points with infinite second coordinate is called the essential part of the diagram.
\end{theorem}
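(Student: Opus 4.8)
The plan is to bypass any attempt to decompose $\V$ into interval modules (which may fail when $\V$ is not interval-decomposable) and instead to build the diagram directly from a \emph{rectangle measure} read off the ranks of the transition maps. For a rectangle $R = [a,b]\times[c,d]$ lying strictly above the diagonal, i.e. with $b < c$, I would set
$$
\mu_{\V}(R) = \mathrm{rk}\, v_c^b - \mathrm{rk}\, v_c^a - \mathrm{rk}\, v_d^b + \mathrm{rk}\, v_d^a .
$$
Every transition map appearing here has source index strictly below target index (because $a \le b < c \le d$), so q-tameness guarantees each of the four ranks is finite and hence $\mu_{\V}(R)$ is a well-defined integer. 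Morally this quantity counts, with multiplicity, the features born in $(a,b]$ and dying in $(c,d]$, and the whole proof consists in making this precise without ever producing a barcode.

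The first key step is \textbf{nonnegativity}: $\mu_{\V}(R) \ge 0$. I would prove this by exhibiting $\mu_{\V}(R)$ as the dimension of an honest vector space. Writing $A = \mathrm{Im}(v_c^b) \subseteq V_c$ and $B = \mathrm{Im}(v_c^a)$, the composition law gives $B \subseteq A$ and $\dim(A/B) = \mathrm{rk}\,v_c^b - \mathrm{rk}\,v_c^a$; applying $v_d^c$ and using $v_d^c \circ v_c^b = v_d^b$ and $v_d^c \circ v_c^a = v_d^a$ yields $v_d^c(A) = \mathrm{Im}(v_d^b)$, $v_d^c(B) = \mathrm{Im}(v_d^a)$, hence $\dim\bigl(v_d^c(A)/v_d^c(B)\bigr) = \mathrm{rk}\,v_d^b - \mathrm{rk}\,v_d^a$. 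Since $v_d^c$ induces a surjection $A/B \twoheadrightarrow v_d^c(A)/v_d^c(B)$, the quantity $\mu_{\V}(R)$ is exactly the dimension of the kernel of this induced map, and is therefore nonnegative. The second step, \textbf{additivity}, is a routine telescoping of the rank formula: cutting $R$ by a horizontal or vertical line splits $\mu_{\V}(R)$ as the sum of the measures of the two pieces, because the rank terms along the shared edge cancel. Together these two properties make $\mu_{\V}$ a nonnegative, integer-valued, finitely additive set function on rectangles away from $\Delta$.

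The final step is to reconstruct the diagram from this measure. For a point $p = (x,y)$ with $x < y$, I would define its multiplicity $m(p)$ as the common stable value of $\mu_{\V}(R_\varepsilon)$ along a nested family of rectangles $R_\varepsilon \downarrow \{p\}$; additivity and nonnegativity force $\mu_{\V}(R_\varepsilon)$ to be a nonincreasing sequence of nonnegative integers, so the limit exists and is eventually attained. The diagram $\dgm(\V)$ is then the multiset of those $p$ with $m(p) > 0$, each carried with multiplicity $m(p)$, together with $\Delta$ counted with infinite multiplicity by convention. Local finiteness, and more precisely the identity $\sum_{p \in R} m(p) = \mu_{\V}(R)$ for every rectangle $R$ away from the diagonal, is what I expect to be the main obstacle: one must show that the additive measure is \emph{purely atomic} and is entirely accounted for by its atoms. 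I would argue this by subdividing $R$ into a finer and finer grid, using that each cell carries an integer measure bounded by the finite total $\mu_{\V}(R)$, so that as the mesh shrinks the mass can only concentrate onto finitely many atoms, with no diffuse part surviving since the cell values are integers and uniformly bounded. The finiteness of $\mu_{\V}(R)$, which is exactly where q-tameness enters, then gives both the equality above and the local finiteness of $\dgm(\V)$, completing the construction.
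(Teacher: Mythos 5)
You are reconstructing the measure-theoretic route. Note first that the paper contains no proof of this statement --- it explicitly defers the construction to \cite{chazal2012structure}, mentioning an algebraic approach via decomposability and a measure-theoretic approach via integer-valued measures on rectangles --- and your plan is precisely the latter. Its first two steps are sound: q-tameness makes the four ranks finite exactly where you say it does; your identification of $\mu_{\V}(R)$ with $\dim\ker\bigl(A/B \to v_d^c(A)/v_d^c(B)\bigr)$ is a correct and clean proof of nonnegativity; and additivity under horizontal or vertical splitting is indeed an immediate telescoping.

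The gap is in your last step, and it is genuine rather than cosmetic. The multiplicity $m(p)$ is not well defined by the recipe ``the common stable value of $\mu_{\V}(R_\varepsilon)$ along a nested family $R_\varepsilon \downarrow \{p\}$'': the stable value depends on the family. Take $\V$ to be the interval module of the half-open interval $(1/2,1]$, i.e. $V_r = \Z_2$ for $1/2 < r \le 1$ and $0$ otherwise, with identity internal maps; it is q-tame (even decomposable, with diagram the single point $(1/2,1)$). Here $\mathrm{rk}\, v_t^s = 1$ iff $1/2 < s$ and $t \le 1$, so your formula gives $\mu_{\V}([a,b]\times[c,d]) = 1$ exactly when $a \le 1/2 < b$ and $c \le 1 < d$, and $0$ otherwise. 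The nested family $[1/2,1/2+\varepsilon] \times [1-\varepsilon,1+\varepsilon]$ shrinks to $p=(1/2,1)$ with constant measure $1$, while the nested family $[1/2-\varepsilon,1/2] \times [1-\varepsilon,1+\varepsilon]$ shrinks to the same $p$ with constant measure $0$: the mass sits ``just to the right of'' the birth value $1/2$, and is attached to no actual point of $\R^2$. For the same reason your accounting identity $\sum_{p \in R} m(p) = \mu_{\V}(R)$ fails for closed rectangles however $m$ is defined: $R = [0,1/2] \times [0.9,1.1]$ contains the diagram point $(1/2,1)$ yet $\mu_{\V}(R)=0$. This is exactly why the cited reference works with \emph{decorated} points $(p^{\pm},q^{\pm})$, or equivalently with half-open rectangles: atoms of the measure live at decorated points, and the diagram of the theorem is obtained only at the very end by forgetting decorations. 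Once that bookkeeping is in place, your subdivision idea can be completed --- a bisection/compactness argument produces a decorated atom in any rectangle of positive measure, one subtracts the corresponding point mass (the difference is again a nonnegative additive rectangle measure) and inducts on the finite total mass --- and that is essentially the proof of \cite{chazal2012structure}. As written, however, ``the mass can only concentrate onto finitely many atoms'' silently assumes that every concentration point is an honest point of $\R^2$ carrying the mass, which is exactly what the example above refutes.
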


The construction of persistence diagrams of q-tame modules is beyond the scope of this paper but it gives rise to the same notion as in the case of decomposable modules. It can be done either by following the algebraic approach based upon the decomposability properties of modules, or by adopting a measure theoretic approach that allows to define diagrams as integer valued measures on a space of rectangles in the plane. We refer the reader to \cite{chazal2012structure} for more informations.

Although persistence modules encountered in practice are decomposable, the general framework of q-tame persistence module plays a fundamental role in the mathematical and statistical analysis of persistent homology. In particular, it is needed to ensure the existence of limit diagrams when convergence properties are studied - see Section \ref{sec:statAspectsPH}.  

A filtration $\Filt = ( F_r )_{r \in T}$ of a simplicial complex or of a topological space is said to be tame if for any integer $k$, the persistence module $(H_k(F_r) \mid r \in T)$ is $q$-tame. Notice that the filtrations of finite simplicial complexes are always tame. As a consequence, 
for any integer $k$ a persistence diagram denoted $\dgm_k( \Filt)$ is associated to the filtration $\Filt$.
When $k$ is not explicitly specified and when there is no ambiguity, it is usual to drop the index $k$ in the notation and to talk about ``the'' persistence diagram $\dgm( \Filt)$ of the filtration $\Filt$. This notation has to be understood as ``$\dgm_k(\Filt)$ for some $k$''. 

\subsection{Persistence landscapes} \label{sec:land}

The persistence landscape introduced in \cite{bubenik2015statistical} 
 is an alternative representation of persistence diagrams. This approach aims at representing the topological information encoded in persistence diagrams as elements of
an Hilbert space, for which statistical learning methods can be directly applied. The persistence landscape is a collection of continuous, piecewise linear functions~\mbox{$\lscape \colon  \mathbb{N} \times \R \to \R$} that summarizes a
persistence diagram $\dgm$. 
\begin{figure}[h]
	\centering
		\includegraphics[width = 0.95 \columnwidth]{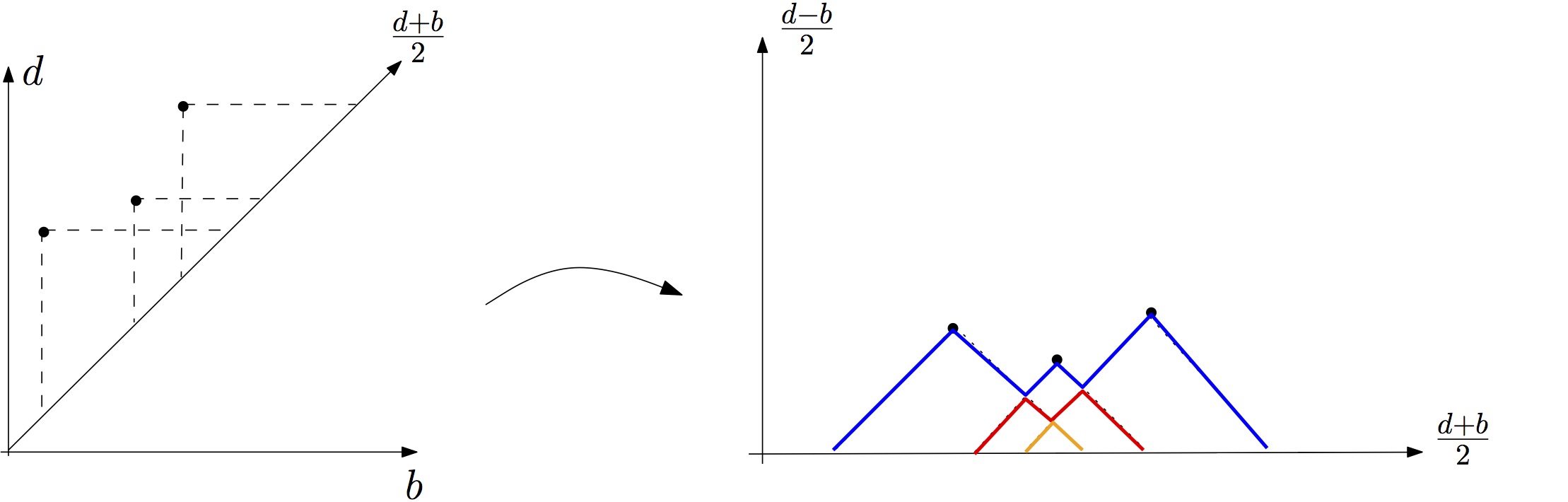} 
		\caption{An example of persistence landscape (right) associated to a persistence diagram (left). The first landscape is in blue, the second one in red and the last one in orange. All the other landscapes are zero.}
	\label{fig:PersistenceLandscapeExample}
\end{figure} 

A birth-death pair $p=(b,d) \in \dgm$  is transformed into the point $ \left( \frac{ b + d}{2}, \frac{d - b}{2}  \right)$, see Figure~\ref{fig:PersistenceLandscapeExample}.  Remember that the  points with infinite persistence have been simply discarded in this definition.
The landscape is then defined by considering the set of functions created by tenting the features  of  the rotated persistence diagram as follows:
\begin{align}
 \Lambda_p (t) &=
 \begin{cases}
  t-b& t \in [b, \frac{b+d}{2}] \\
  d-t & t \in (\frac{b+d}{2},d] \\
  0 & \text{otherwise}.
 \end{cases}
 \label{eq:triangle}
\end{align}

The persistence landscape $\lscape_{\dgm}$ of $\dgm$ is a  summary of the arrangement of piecewise linear curves obtained by overlaying the graphs of
the functions~\mbox{$\{ \Lambda_p \}_{p \in \dgm}$}.
Formally, the persistence landscape of $\dgm$ is the collection of functions
\begin{equation}\label{eq:landscape}
 \lscape_{\dgm}(k,t) = \underset{r \in \dgm}{\text{kmax}} ~ \Lambda_r(t), \quad
 t \in [0,T], k \in \mathbb{N},
\end{equation}
where  kmax  is the $k$th largest value in the set; in particular,
$1$max is the usual maximum~function. Given $k \in \mathbb{N}$, the function $\lscape_{\dgm}(k,.) : \R \to \R$ is called the \emph{$k$-th landscape} of $\dgm$.
It is not difficult to see that the map that associate to each persistence diagram its corresponding landscape is injective. In other words, formally no information is lost when a persistence diagram is represented through its persistence landscape. 

The advantage of the persistence landscape representation is two-fold. First, persistence diagrams are mapped as elements of a functional space, opening the door to the use of a broad variety of statistical and data analysis tools for further processing of topological features - see, e.g. \cite{bubenik2015statistical,chazal2015stochastic} and Section \ref{sec:submeth}. Second, and fundamental from a theoretical perspective, the persistence landscapes share the same stability properties as persistence diagrams - see Section \ref{subsec:stab}.


\subsection{Linear representations of persistence homology}

A persistence diagram without its essential part  can be represented as a discrete measure on $\Delta^+ =  \{  p = (b,d),\ b < d < \infty\}$. With a slight abuse of notation, we can write :
\begin{equation*}
\dgm = \sum_{ p\in \dgm} \delta_{p},
\end{equation*} 
where the features are counted with multiplicity and where $\delta_{(b,d)}$ denotes the Dirac measure in $p = (b,d)$.
Most of the persistence-based descriptors that have been proposed to analyze persistence can be expressed as linear transformations of  the persistence diagram, seen as a point process 
$$\Psi(\dgm) =   \sum_{p  \in \dgm} f(p),$$
for some function $f$ defined on $\Delta$ and taking values in a Banach space.

In most cases, we want these transformations to apply independently at each homological dimension. For  $k \in \N$ a given homological dimension we then consider some linear transformation of the persistence diagram restricted to the topological features of dimension $k$:
\begin{equation}
\Psi_k(\dgm_k) =   \sum_{p \in \dgm_k} f_k(p), 
\label{eq:linear_repr}
\end{equation} 
where $\dgm_k$ is the persistence diagram of the topological features of dimension $k$ and where $f_k$ is  defined on $\Delta$ and takes values in a Banach space.

\paragraph{Betti curve.} The simplest way to represent persistence homology is the Betti function or Betti curve. The Betti curve of homological dimension $k$ is defined as
\begin{equation*}
\beta_k(t) =  \sum_{(b,d) \in \dgm} w(b,d)  \indic{ t \in [b,d]} 
\end{equation*}  
where $w$ is a weight function defined on $\Delta$. In other words, the Betti curve is the number of barcodes at time $m$. This descriptor is a linear representation of  persistence homology by taking $f$ in \eqref{eq:linear_repr} such that  $ f(b,d)(t)=  w(b,d)  \indic{ t \in [b,d]} .$ 
A typical choice for the weigh function is a increasing function of the persistence $ w(b,d)  = \tilde w(d-b)$ where $\tilde w$ is an increasing function defined on $\R^+$.  One of the first applications of Betti curves can be found in \cite{umeda2017time}.

\paragraph{Persistence surface.} Persistence surface (also called persistence images)   is obtained  by making the convolution of a diagram with a kernel. It has been introduced  in \cite{adams2017persistence}. For $K:\R^2\to \R$ a kernel and $H$ a $2\times 2$ bandwidth matrix (e.g. a symmetric positive definite matrix), let for $u\in \R^2$ 
\begin{equation*}
K_H(u) = \det(H)^{-1/2} K(H^{-1/2}  u).
\end{equation*}
Let $w:\R^2 \to \R_+$ a weight function defined on  $\Delta$. One defines the persistence surface of homological dimension $k$ associated to a diagram $ \dgm$, with kernel $K$ and  bandwidth matrix $H$ by:
\begin{equation*}
\forall u \in \R^2, \ \rho_k(\dgm)(u) = \sum_{p \in \dgm_k} w(r) K_H(u-p) .
\end{equation*}
The persistence surface is obviously a linear representation of persistence homology.  Typical weigh functions are increasing functions of the persistence. 

\paragraph{Other linear representations of persistence.} Many other linear representations of persistence have been proposed in the literature, such as e. g., the persistence silhouette~\citep{chazal2015stochastic}, the accumulated persistence function~ \citep{biscio2019accumulated} and variants of the persistence surface~\citep{chen2017density, kusano2016persistence, reininghaus2015stable}.   

Considering persistence diagrams as discrete measure and their vectorizations as linear representation is an approach that also proven fruitful to study distributions of diagrams \cite{Divol_Chazal_2020} and the metric structure of the space of persistence diagrams \cite{divol2019understanding} - see Sections \ref{sec:metric-space-dgm} and \ref{sec:stat-family}.

\subsection{Metrics on the space of persistence diagrams} \label{sec:metric-space-dgm}
To exploit the topological information and topological features inferred from persistent homology, one needs to be able to compare persistence diagrams, i.e. to endow the space of persistence diagrams with a metric structure. Although several metrics can be considered, the most fundamental one is known as the {\em bottleneck distance}. 

Recall that a persistence diagram is the union of a discrete multi-set in the half-plane above the diagonal $\Delta$ and, for technical reasons that will become clear below, of $\Delta$ where the point of $\Delta$ are counted with infinite multiplicity. 
A {\em matching} - see Figure \ref{fig:MatchingDiag} - between two diagrams $\dgm_1$ and $\dgm_2$ is a subset $m \subseteq \dgm_1 \times \dgm_2$ such that every points in $\dgm_1 \setminus \Delta$ and $\dgm_2 \setminus \Delta$ appears exactly once in $m$. In other words, for any $p \in \dgm_1 \setminus \Delta$, and for any $q \in \dgm_2 \setminus \Delta$, $(\{ p \} \times \dgm_2) \cap m$ and $(\dgm_1 \times \{ q \}) \cap m$ each contains a  single pair. 
The {\em Bottleneck distance} between $\dgm_1$ and $\dgm_2$ is then defined by
$$
\bottle (\dgm_1, \dgm_2) =\inf_{\scriptsize{\mbox{matching}} \ m} \max_{(p,q) \in m} \| p-q \|_\infty.
$$

\begin{figure}[h]
	\centering
		\includegraphics[width = 0.5 \columnwidth]{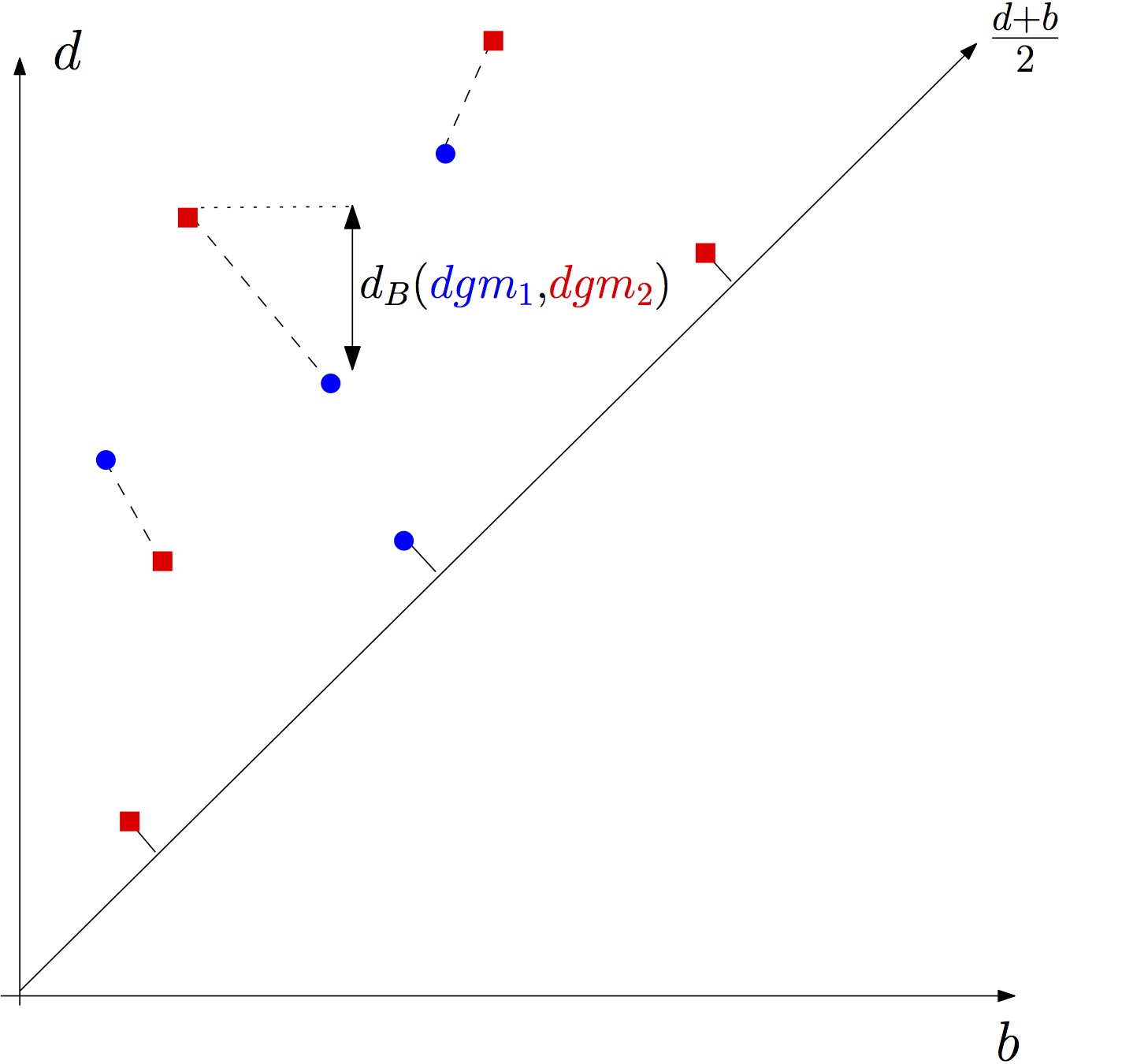} 
		\caption{A perfect matching and the Bottleneck distance between a blue and a red diagram. Notice that some points of both diagrams are matched to points of the diagonal.}
	\label{fig:MatchingDiag}
\end{figure}

The practical computation of the bottleneck distance boils down to the computation of a perfect matching in a bipartite graph for which classical algorithms can be used. 

The bottleneck metric is a $L_\infty$-like metric. It turns out to be the natural one to express stability properties of persistence diagrams presented in Section \ref{subsec:stab}, but it suffers from the same drawbacks as the usual  $L_\infty$ norms, i.e. it is completely determined by the largest distance among the pairs and do not take into account the closeness of the remaining pairs of points. A variant, to overcome this issue, the so-called Wasserstein distance between diagrams is sometimes considered. Given $p \geq 1$, it is defined by 
$$
W_p (\dgm_1, \dgm_2)^p = \inf_{\scriptsize{\mbox{matching}} \ m} \sum_{(p,q) \in m} \| p-q \|^p_\infty.
$$
Useful stability results for persistence in the $W_p$ metric exist among the literature, in particular \cite{cohen2010lipschitz}, but they rely on assumptions that make them consequences of the stability results in the bottleneck metric. 
A general study of the space of persistence diagrams endowed with $W_p$ metrics has been considered in \cite{divol2019understanding} where they propose a general framework, based upon optimal partial transport, in which many important properties of persistence diagrams can be proven in a natural way.

\subsection{Stability properties of persistence diagrams} \label{subsec:stab}

A fundamental property of persistence homology is that persistence diagrams of filtrations built on top of data sets turn out to be very stable with respect to some perturbations of the data. To formalize and quantify such stability properties, we first need to precise the notion of perturbation that are allowed. 

Rather than working directly with filtrations built on top of data sets, it turns out to be more convenient to define a notion of proximity between persistence module from which we will derive a general stability result for persistent homology. Then, most of the stability results for specific filtrations will appear as a consequence of this general theorem. To avoid technical discussions, from now on we assume, without loss of generality, that the considered persistence modules are indexed by $\R$. 

\begin{definition}
Let $\V, \W$ be two persistence modules indexed by $\R$. Given $\delta \in \R$, a homomorphism of degree $\delta$ between $\V$ and $\W$ is a collection $\Phi$ of linear maps $\phi_r : V_r \to W_{r+\delta}$, for all $r \in \R$ such that for any $r \leq s$, $\phi_s \circ v_s^r = w_{s+\delta}^{r+\delta} \circ \phi_r$.
\end{definition}

An important example of homomorphism of degree $\delta$ is the {\em shift endomorphism} $1_\V^\delta$ which consists of the families of linear maps $(v_{r+\delta}^r)$. Notice also that homomorphisms of modules can naturally be composed: the composition of a homomorphism $\Psi$ of degree $\delta$ between $\mathbb{U}$ and $\V$ and a homomorphism $\Phi$ of degree $\delta'$ between $\V$ and $\W$ naturally gives rise to a homomorphism $\Phi \Psi$ of degree $\delta + \delta'$ between $\mathbb{U}$ and $\W$.

\begin{definition}
Let $\delta \geq 0$. Two persistence modules $\V, \W$ are $\delta$-interleaved if there exists two homomorphism of degree $\delta$, $\Phi$, from $\V$ to $\W$ and $\Psi$, from $\W$ to $\V$ such that $ \Psi \Phi = 1_\V^{2\delta}$ and $ \Phi \Psi = 1_\W^{2\delta}$.
\end{definition}

Although it does not define a metric on the space of persistence modules, the notion of closeness between two persistence module may be defined as the smallest non negative $\delta$ such that they are $\delta$-interleaved. Moreover, it allows to formalize the following fundamental theorem \cite{ccggo-ppmtd-09,chazal2012structure}. 

\begin{theorem}[Stability of persistence] \label{thm:pers-stab-alg}
Let $\V$ and $\W$ be two q-tame persistence modules. If $\V$ and $\W$ are $\delta$-interleaved for some $\delta \geq 0$, then
$$\bottle ( \dgm(\V), \dgm(\W) ) \leq \delta.$$
\end{theorem}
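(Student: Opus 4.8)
The plan is to reduce the bottleneck bound to a monotonicity property of the \emph{persistence measures} of rectangles, and then to turn that property into an explicit matching by a Hall-type argument. Because $\V$ and $\W$ are only assumed $q$-tame, I cannot decompose them into interval modules and match interval endpoints directly; instead I argue entirely with the ranks $\operatorname{rank}(v_s^r)$ of the structure maps, which are finite by $q$-tameness. For a rectangle $R = [a,b]\times[c,d]$ lying strictly above the diagonal ($b<c$), the number of points of $\dgm(\V)$ contained in $R$, counted with multiplicity, is given by the inclusion--exclusion quantity
\begin{equation*}
\mu_\V(R) = \operatorname{rank}(v_c^b) - \operatorname{rank}(v_c^a) - \operatorname{rank}(v_d^b) + \operatorname{rank}(v_d^a),
\end{equation*}
and the previous theorem guarantees that $\dgm(\V)$ is exactly the locally finite multiset above $\Delta$ encoded by these nonnegative integers.

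The first step is the \emph{Box Lemma}. Writing $\Phi = (\phi_r)$ and $\Psi = (\psi_r)$ for the two degree-$\delta$ homomorphisms with $\Psi\Phi = 1_\V^{2\delta}$ and $\Phi\Psi = 1_\W^{2\delta}$, the interleaving and commutation relations let me factor each doubly shifted structure map of $\V$ through one of $\W$: for $r \le s$ one checks that $v_{s+2\delta}^{r} = \psi_{s+\delta}\circ w_{s+\delta}^{r+\delta}\circ \phi_r$, whence $\operatorname{rank}(v_{s+2\delta}^{r}) \le \operatorname{rank}(w_{s+\delta}^{r+\delta})$, and symmetrically $\operatorname{rank}(w_{s+2\delta}^{r}) \le \operatorname{rank}(v_{s+\delta}^{r+\delta})$. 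Combining these rank inequalities appropriately in the inclusion--exclusion formula (taking care of the signs of the monotone and antitone terms) yields the domination I am after: for every box $R$ strictly above the diagonal, $\mu_\W(R) \le \mu_\V(R^{\delta})$, where $R^{\delta}$ denotes $R$ enlarged by $\delta$ on each side, and the same inequality holds with the roles of $\V$ and $\W$ exchanged.

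The second step is to convert these box inequalities into a matching of cost at most $\delta$, that is $\bottle(\dgm(\V),\dgm(\W)) \le \delta$. I would first treat a compact region of the half-plane bounded away from $\Delta$, where local finiteness makes both diagrams finite multisets, and set up the bipartite graph joining a point of $\dgm(\V)$ to a point of $\dgm(\W)$ whenever their $\ell_\infty$-distance is at most $\delta$ (points being allowed to match to $\Delta$). The domination $\mu_\W(R) \le \mu_\V(R^{\delta})$ is precisely the Hall marriage condition for this graph, so a $\delta$-matching exists locally; exhausting the region above $\Delta$ by such sub-regions and passing to the limit, using the infinite multiplicity of $\Delta$ to absorb points that drift toward the diagonal, produces a global matching realizing $\bottle \le \delta$.

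The main obstacle is exactly the $q$-tame, non-decomposable setting: every statement must be phrased through the persistence measure rather than through interval endpoints, and the matching lemma must be proved abstractly via Hall's theorem together with a compactness/limiting argument to handle the infinite, merely locally finite diagrams. A delicate point within this is the behaviour near the diagonal and at infinity, which is where the conventions that $\Delta$ carries infinite multiplicity and that $\dgm(\V)$ is locally finite away from $\Delta$ are essential. An alternative, and in some treatments cleaner, route to the same conclusion is to first establish an Interpolation Lemma producing a family $(\mathbb{U}_x)_{x\in[0,\delta]}$ with $\mathbb{U}_0=\V$, $\mathbb{U}_\delta=\W$ and $\mathbb{U}_x,\mathbb{U}_y$ being $|x-y|$-interleaved, and then to bound the bottleneck distance continuously along this path.
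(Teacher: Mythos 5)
First, a point of reference: the paper itself does not prove Theorem~\ref{thm:pers-stab-alg}; it quotes it from \cite{ccggo-ppmtd-09,chazal2012structure}, so your proposal has to be measured against the proofs in those references. Your general framework --- persistence measures defined through ranks, a Box Lemma, and a conversion of measure domination into a matching --- is indeed the framework of those works, and both your rank formula for $\mu_\V(R)$ and the factorization $v_{s+2\delta}^{r}=\psi_{s+\delta}\circ w_{s+\delta}^{r+\delta}\circ\phi_r$ are correct. However, there are two genuine gaps. The first is the Box Lemma itself: it cannot be obtained by ``combining these rank inequalities'' termwise in the inclusion--exclusion formula, however carefully one tracks signs. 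Writing $R=[a,b]\times[c,d]$ and $R^\delta=[a-\delta,b+\delta]\times[c-\delta,d+\delta]$, the only corner that can be handled termwise is the leading one, $\operatorname{rank}(w_c^b)\leq\operatorname{rank}(v_{c-\delta}^{b+\delta})$ (this is exactly the Quadrant Lemma). For the other three corners the required inequality points the wrong way: for instance one would need $\operatorname{rank}(v_{c-\delta}^{a-\delta})\leq\operatorname{rank}(w_c^a)$, whereas interleaving only gives $\operatorname{rank}(v_{c-\delta}^{a-\delta})\leq\operatorname{rank}(w_{c-2\delta}^{a})$, and $\operatorname{rank}(w_{c-2\delta}^{a})\geq\operatorname{rank}(w_c^a)$ because ranks only decrease as the index interval grows; likewise the needed $\operatorname{rank}(w_d^a)\leq\operatorname{rank}(v_{d+\delta}^{a-\delta})$ does not follow from the available $\operatorname{rank}(w_d^a)\leq\operatorname{rank}(v_{d-\delta}^{a+\delta})$. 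The actual proof of the Box Lemma is a genuine diagram-chasing argument on subquotients such as $\bigl(\operatorname{im}(w_c^b)\cap\ker(w_d^c)\bigr)/\bigl(\operatorname{im}(w_c^a)\cap\ker(w_d^c)\bigr)$, whose dimension computes $\mu_\W(R)$, where the enlargement of $R$ on all four sides is exactly what is needed to kill the kernel of the map induced by $\psi$; it is not a sign-bookkeeping exercise.

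Second, and more seriously, single-rectangle domination is not ``precisely the Hall marriage condition.'' Hall's condition for your bipartite graph quantifies over arbitrary finite sets $S$ of points of $\dgm(\W)$ and requires that the union of $\ell_\infty$-balls $N_\delta(S)$ contain at least $|S|$ points of $\dgm(\V)$. The Box Lemma applied to the bounding rectangle $B$ of $S$ only controls the number of points of $\dgm(\V)$ in $B^\delta$, which is in general strictly larger than $N_\delta(S)$: in one dimension the two sets coincide, but in the plane they do not (think of points of $S$ spread along a diagonal staircase, so that $B^\delta$ contains large corner regions far from every point of $S$). The points of $\dgm(\V)$ supplied by the Box Lemma may lie in $B^\delta\setminus N_\delta(S)$ and are then useless for Hall's condition, so the key implication of your second step is asserted rather than proved. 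This is exactly the difficulty that the cited proofs circumvent: in \cite{ccggo-ppmtd-09,chazal2012structure} the Interpolation Lemma is not an ``alternative, in some treatments cleaner, route'' but the essential ingredient --- one interpolates by a family $(\mathbb{U}_x)_{x\in[0,\delta]}$, subdivides $[0,\delta]$ finely, observes that at scale $\delta/N$ matchings between consecutive diagrams become forced by local finiteness, and then composes matchings and passes to a limit (with a pixelization/compactness argument handling the q-tame case). The alternative route that genuinely avoids interpolation, the induced-matching proof of Bauer and Lesnick, requires decomposable (pointwise finite-dimensional) modules and so does not cover the q-tame setting of the theorem as stated.
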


Although purely algebraic and rather abstract, this result is a efficient tool to easily establish concrete stability results in TDA.  
For example we can easily recover the first persistence stability result that appeared in the literature \citep{steiner05stable}. 

\begin{theorem} \label{thm:pers-stab-func}
Let $f, g: M \to \R$ be two real-valued functions defined on a topological space $M$ that are $q$-tame, i.e. such that the sublevel sets filtrations of $f$ and $g$ induce $q$-tame modules at the homology level. Then for any integer $k$,  
$$ \bottle( \dgm_k(f), \dgm_k(g) )  \leq \| f-g \|_\infty = \sup_{x \in M} | f(x)-g(x)|$$
where $\dgm_k(f)$ (resp. $\dgm_k(g)$) is the persistence diagram of the persistence module $( H_k(f^{-1}(-\infty,r])) | r \in \R)$ (resp. $( H_k(g^{-1}(-\infty,r])) | r \in \R)$) where the linear maps are the one induced by the canonical inclusion maps between sublevel sets. 
\end{theorem}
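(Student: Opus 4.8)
The plan is to deduce this functional stability result directly from the algebraic Stability of Persistence theorem (Theorem \ref{thm:pers-stab-alg}). Set $\delta = \|f - g\|_\infty$, noting that the statement is vacuous when this quantity is infinite. It then suffices to show that the two persistence modules $\V = (H_k(f^{-1}((-\infty,r])))_{r \in \R}$ and $\W = (H_k(g^{-1}((-\infty,r])))_{r \in \R}$ are $\delta$-interleaved; the desired bound $\bottle(\dgm_k(f),\dgm_k(g)) \leq \delta$ is then immediate from Theorem \ref{thm:pers-stab-alg}, since both modules are $q$-tame by hypothesis.

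The observation driving the interleaving is purely set-theoretic. Writing $F_r = f^{-1}((-\infty,r])$ and $G_r = g^{-1}((-\infty,r])$, the bound $\|f-g\|_\infty \leq \delta$ means $g(x) \leq f(x) + \delta$ and $f(x) \leq g(x) + \delta$ for every $x \in M$. Hence $F_r \subseteq G_{r+\delta}$ and $G_r \subseteq F_{r+\delta}$ for all $r$, because $f(x) \leq r$ forces $g(x) \leq r+\delta$, and symmetrically. First I would apply the functor $H_k$ to these inclusions to obtain linear maps $\phi_r : H_k(F_r) \to H_k(G_{r+\delta})$ and $\psi_r : H_k(G_r) \to H_k(F_{r+\delta})$. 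Functoriality of homology, together with the fact that the relevant inclusions of sublevel sets commute, shows that the collection $\Phi = (\phi_r)$ is a homomorphism of degree $\delta$ from $\V$ to $\W$, and that $\Psi = (\psi_r)$ is a homomorphism of degree $\delta$ from $\W$ to $\V$.

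It then remains to verify the two interleaving identities. The composite inclusion $F_r \subseteq G_{r+\delta} \subseteq F_{r+2\delta}$ is simply the inclusion $F_r \subseteq F_{r+2\delta}$, so applying $H_k$ and using functoriality gives $\psi_{r+\delta} \circ \phi_r = v_{r+2\delta}^r$, i.e. $\Psi\Phi = 1_\V^{2\delta}$; the symmetric argument yields $\Phi\Psi = 1_\W^{2\delta}$. This establishes the $\delta$-interleaving and completes the proof. I expect the main, though entirely routine, obstacle to be the bookkeeping of the commuting diagrams: one must check that each square relating $\phi_s \circ v_s^r$ to $w_{s+\delta}^{r+\delta} \circ \phi_r$ commutes, which reduces to the remark that the two composite inclusions $F_r \subseteq F_s \subseteq G_{s+\delta}$ and $F_r \subseteq G_{r+\delta} \subseteq G_{s+\delta}$ coincide as maps into $G_{s+\delta}$, so functoriality of $H_k$ forces the induced homology maps to agree.
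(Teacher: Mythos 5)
Your proof is correct and follows exactly the same route as the paper's: set $\delta = \|f-g\|_\infty$, observe the mutual inclusions of sublevel sets $f^{-1}((-\infty,r]) \subseteq g^{-1}((-\infty,r+\delta])$ and vice versa, pass to homology to obtain a $\delta$-interleaving, and invoke Theorem \ref{thm:pers-stab-alg}. The only difference is that you spell out the functoriality bookkeeping that the paper compresses into the single sentence ``this interleaving between the sublevel sets induces a $\delta$-interleaving between the persistence modules at the homology level.''
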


\begin{proof}
Denoting $\delta = \| f-g \|_\infty$ we have that for any $r \in \R$, $f^{-1}(-\infty,r]) \subseteq g^{-1}(-\infty,r+\delta])$ and $g^{-1}(-\infty,r]) \subseteq f^{-1}(-\infty,r+\delta])$. This interleaving between the sublevel sets of $f$ induces a $\delta$-interleaving between the persistence modules at the homology level and the result follows from the direct application of Theorem \ref{thm:pers-stab-alg}. 
\end{proof}

Theorem \ref{thm:pers-stab-alg} also implies a stability result for the persistence diagrams of filtrations built on top of data. 

\begin{theorem} \label{thm:pers-stab-data}
Let $\X$ and $\Y$ be two compact metric spaces and let $\Filt(\X)$ and $\Filt(\Y)$ be the Vietoris-Rips of \v Cech filtrations built on top $\X$ and $\Y$. 
Then 
$$ d_b \left ( \dgm ( \Filt( \X) ) , \dgm ( \Filt( \Y) ) \right ) \leq 2 d_{GH}(\X, \Y) $$
where $\dgm ( \Filt( \X) )$ and $\dgm ( \Filt( \Y) )$ denote the persistence diagram of the filtrations $\Filt(\X)$ and $\Filt(\X)$.
\end{theorem}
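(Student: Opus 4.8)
The plan is to deduce the statement from the algebraic stability theorem (Theorem~\ref{thm:pers-stab-alg}) by exhibiting, for every $\eta > d_{GH}(\X,\Y)$, a $2\eta$-interleaving between the persistence modules $\V = (H_k(\Rips_r(\X)))_{r\in\R}$ and $\W = (H_k(\Rips_r(\Y)))_{r\in\R}$. I treat the Vietoris--Rips case in detail; the \v Cech case is entirely analogous. First I would use the very definition of $d_{GH}$: given $\eta > d_{GH}(\X,\Y)$ there is a metric space $(M,\rho)$ containing isometric copies of $\X$ and $\Y$, which I identify with $\X,\Y \subseteq M$, such that $d_H(\X,\Y) < \eta$ in $M$. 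Since the embeddings are isometric, the intrinsic metrics used to build $\Rips_r(\X)$ and $\Rips_r(\Y)$ coincide with $\rho$, so nothing is lost by working inside $M$.

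Next I would build the interleaving maps from nearest-point correspondences. Using $d_H(\X,\Y)<\eta$, choose set maps $c'\colon \X \to \Y$ and $c\colon \Y \to \X$ with $\rho(x,c'(x))<\eta$ and $\rho(y,c(y))<\eta$ for all $x,y$. For any simplex $[x_0,\dots,x_k]\in\Rips_r(\X)$ the triangle inequality gives $\rho(c'(x_i),c'(x_j)) \le \rho(x_i,x_j) + 2\eta \le r+2\eta$, so $c'$ is a vertex map inducing a simplicial map $\Rips_r(\X)\to\Rips_{r+2\eta}(\Y)$ for every $r$, and symmetrically for $c$. Because $c'$ is a single vertex map compatible with all inclusions $\Rips_r\subseteq\Rips_s$, the induced linear maps $\phi_r\colon H_k(\Rips_r(\X))\to H_k(\Rips_{r+2\eta}(\Y))$ assemble into a homomorphism $\Phi$ of degree $2\eta$ from $\V$ to $\W$; likewise $c$ yields a degree-$2\eta$ homomorphism $\Psi$ from $\W$ to $\V$. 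Checking $\phi_s\circ v_s^r = w^{r+2\eta}_{s+2\eta}\circ\phi_r$ is routine functoriality of simplicial maps.

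The crux is to verify the interleaving identities $\Psi\Phi = 1_\V^{4\eta}$ and $\Phi\Psi = 1_\W^{4\eta}$. By construction $\Psi\Phi$ is induced on homology by the composite vertex map $c\circ c'\colon \X\to\X$, seen as a simplicial map $\Rips_r(\X)\to\Rips_{r+4\eta}(\X)$, while $1_\V^{4\eta}$ is induced by the inclusion $\Rips_r(\X)\hookrightarrow\Rips_{r+4\eta}(\X)$. I would show these two simplicial maps are \emph{contiguous}, which implies they agree on homology. Concretely, for $[x_0,\dots,x_k]\in\Rips_r(\X)$ I must check that all pairwise distances among $\{x_0,\dots,x_k\}\cup\{c(c'(x_0)),\dots,c(c'(x_k))\}$ are at most $r+4\eta$: the estimates $\rho(x_i,x_j)\le r$, $\rho(x_i,c(c'(x_j)))\le \rho(x_i,x_j)+\rho(x_j,c'(x_j))+\rho(c'(x_j),c(c'(x_j))) < r+2\eta$, and $\rho(c(c'(x_i)),c(c'(x_j))) < r+4\eta$ all follow from the triangle inequality and the $<\eta$ bounds, so the union spans a simplex of $\Rips_{r+4\eta}(\X)$. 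This contiguity computation, together with the fact that contiguous simplicial maps induce the same map in homology, is the main (though elementary) obstacle; the symmetric identity follows identically.

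Finally I would conclude. The above shows $\V$ and $\W$ are $2\eta$-interleaved, so Theorem~\ref{thm:pers-stab-alg} yields $\bottle(\dgm(\Filt(\X)),\dgm(\Filt(\Y)))\le 2\eta$; letting $\eta$ decrease to $d_{GH}(\X,\Y)$ gives the claimed bound. One technical point must be addressed for Theorem~\ref{thm:pers-stab-alg} to apply: the modules $\V,\W$ must be q-tame. This is automatic when $\X,\Y$ are finite, and for general compact metric spaces it is a known consequence of total boundedness (the Rips and \v Cech persistence modules of a compact metric space are q-tame); I would invoke this fact to legitimately pass to the persistence diagrams.
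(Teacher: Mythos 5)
Your proof is correct and follows exactly the route the paper indicates: the paper itself gives no details for Theorem~\ref{thm:pers-stab-data}, merely asserting that it follows from the algebraic stability theorem (Theorem~\ref{thm:pers-stab-alg}), and your argument --- realizing $d_{GH}$ in a common ambient space, building correspondence vertex maps, checking contiguity to get a $2\eta$-interleaving, and invoking q-tameness of Rips/\v Cech modules of compact metric spaces --- is precisely the standard derivation the paper implicitly references. No gaps; the contiguity estimates and the passage $\eta \downarrow d_{GH}(\X,\Y)$ are handled correctly.
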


As we already noticed in the Example 3 of Section \ref{sec:pers-example}, the persistence diagrams can be interpreted as multiscale topological features of $\X$ and $\Y$. In addition, Theorem \ref{thm:pers-stab-data} tells us that these features are robust with respect to perturbations of the data in the Gromov-Hausdorff metric. They can be used as discriminative features for classification or other tasks - see, for example, \cite{ccgmo-ghsssp-09} for an application to non rigid 3D shapes classification. 

We now give similar results for the alternative persistence homology representations introduced before.  From the definition of persistence landscape  we immediately observe that
$\lscape (k,\cdot)$ is one-Lipschitz and thus similar stability properties are satisfied for the landscapes as for persistence diagrams.
\begin{Prop}[Stability of persistence landscapes  \cite{bubenik2015statistical}. ]
\label{lemma:basic-prop}
 Let $\dgm$ and $\dgm'$ be two persistence diagrams (without their essential parts). For any $t\in
\mathbb{R}$ and any $k \in \mathbb{N}$, we have:\\
(i)  $\lambda(k,t)
~\geq~ \lambda(k+1,t) ~\geq~ 0 $.\\
(ii) $| \lambda (k,t) -\lambda'(k,t)| ~\leq~  \bottle(\dgm,\dgm') )$. 
\end{Prop}
A large class of linear representations are continuous with respect to the Wasserstein metric $W_s$  in the space of persistence diagrams and with respect to the Banach norm of the linear representation of persistence. Generally speaking, it is not always possible to upper bound the modulus of continuity of the linear representation operator. However, in the case where $s=1$ it is even possible to show a stability result if the weight function  take small values for points close to the diagonal, see  \cite{divol2019understanding,hofer2019learning}.

\paragraph{Stability versus discriminative capacity of persistence representations.}  The results of \cite{divol2019understanding} show that continuity and stability is only possible with weigh functions taking small values for points close to the diagonal. However, in general there is no specific reason to consider that points close to the diagonal are less important than others, given a learning task. In a machine learning perspective, it is also relevant to design linear representation with general weigh functions, although it would be more difficult to prove consistency of the corresponding methods without at least the continuity of the representation. Stability is thus important but maybe a too strong requirement for many problems in data sciences. Designing linear representation that are sensitive to specific parts of a persistence diagrams rather than globally stable may reveal a good strategy in practice.    

\section{Statistical aspects of persistent homology } \label{sec:statAspectsPH}

Persistence homology by itself does not take into account the random nature of data and the intrinsic variability of the topological quantity they infer. We now present a statistical approach to persistent homology, in the sense that data is considered as generated from an unknown distribution. We start with several  consistency results for persistent homology inference.

 \subsection{Consistency results for persistent homology}
Assume that we observe $n$
points $(X_1, \dots ,X_n)$  in a metric space $(M,\rho)$ drawn i.i.d.\ from an unknown probability measure $\mu$ whose support is a compact set denoted $\X_\mu$.
The Gromov-Hausdorff distance allows us to compare $\X_\mu$ with compact metric spaces 
not necessarily embedded in  $M$. 
 In the following, an \emph{estimator} $\hX$ of $\X_\mu$ is  a function of $X_1 \dots, X_n$ that takes values in the set of compact metric spaces. 

Let $\Filt(\X_\mu)$ and $\Filt(\hX)$ be two filtrations defined on $\X_\mu$ and $\hX$.  Starting from Theorem~\ref{thm:pers-stab-data}; an natural strategy for estimating the persistent homology of $\Filt(\X_\mu)$ consists in estimating the support $\X_\mu$. Note that in some cases the space $M$ can be unknown and the observations $X_1 \dots, X_n$ are then only known through their pairwise
distances $\rho(X_i,X_j)$, $i,j = 1, \cdots, n$. The use of the Gromov-Hausdorff distance  allows us to consider this set of observations as an
abstract metric space of cardinality $n$, independently of the way it is embedded in $M$.  
This general framework includes the more standard approach consisting in estimating the support with respect to the Hausdorff distance by restraining the values of  $\hX$ to the compact sets included in $M$. 

The finite set $\X _n := \{ X_1, \dots ,X_n\}$ is a natural estimator of the support $\X_\mu$.
In several contexts discussed in the following,
$\X _n$ shows optimal rates of convergence to $\X_\mu$ with respect to the Hausdorff distance.
For some constants $a,b >0$, we say that $\mu$ satisfies the $(a,b)$-standard assumption if for any $x \in \X_\mu$ and any 
$r >0$, 
\begin{equation}
\label{abstAssump}
\mu(B(x,r)) \geq \min(ar^b, 1). 
\end{equation}
This assumption has been
widely used in the literature of set estimation under Hausdorff distance \citep{cuevas2004boundary,SinghScottNowak09}. Under this assumption, it can be easily derived that the rate of convergence of $ \dgm(\Filt(\X_n)) $ to $\dgm (\Filt(\X_\mu))$ for the bottleneck metric is upper bounded by $O\left( \frac{ \log n}  n  \right)^{1/b}$. More precisely, this rate upper bounds the minimax rate of convergence over the set of
probability measures on the metric space $(M,\rho)$ satisfying the $(a,b)$-standard assumption on $M$.
\begin{theorem}{\textnormal{[\cite{chazal2014convergenceJMLR}]}} \label{prop:lowerboundAbs}
For some positive constants $ a $ and $ b $, let
 \begin{equation*} \label{Pad}
   \mathcal P :=   \left\{ \mu  \textrm{ on } M \: | \:  \X_\mu \textrm{ is compact and } \forall x \in \X_\mu,   
    \forall r > 0 , \, \mu \left(B(x,r) \right) \geq  \min \left( 1 , a r^b \right) \right\}  .
\end{equation*}
Then, it holds
\begin{equation*} \label{ubE}
 \sup_{\mu \in \mathcal P} \E \left[ \bottle  (   \dgm (\Filt(\X_\mu)) ,   \dgm(\Filt(\X_n)) ) \right]  \leq C  \left( \frac{ \log n}  n  \right)^{1/b} 
\end{equation*}
where the constant $C$ only depends on  $a$ and $b$. 
\end{theorem}
Under additional technical assumptions, the corresponding lower bound  can be shown (up to a logarithmic term), see \cite{chazal2014convergenceJMLR}. By applying stability results, similar consistency results can be easily derived under alternative generative models as soon as a consistent estimator of the support under Hausdorff metric is known. For instance,  from the results of \citet{GenoveseEtAl2012} about Hausdorff support estimation under additive noise, it can be deduced  that the  minimax convergence rates for the persistence diagram estimation is faster than $( \log n) ^{-1/2}$. Moreover, as soon as a stability result is available for some given representation of persistence,  similar consistency results can be directly derived from the consistency for persistence diagrams.

\paragraph{Estimation of the persistent homology of functions.} Theorem~\ref{thm:pers-stab-alg} opens the door to the estimation of the persistent homology of functions defined on $\R^d$, on a submanifold of $\R^d$ or more generally on a metric space.  The persistent homology of regression functions has also been studied in \cite{bubenik2010statistical}. The alternative approach of~\citet{bobrowski2014topological} which is based on the inclusion map between nested pairs of estimated level sets can  be applied with  kernel density and regression kernel estimators to estimate persistence homology of density functions and regression functions. Another direction of research on this topic concerns various versions of robust TDA. One solution is to study the persistent homology  of the upper level sets of density estimators~\citep{fasy2014confidence}. A different approach, more closely related to the distance function, but robust to noise, consists in studying the persistent homology of the sub level sets of the distance to measure defined in Section~\ref{subsec:DTM} \citep{cflmrw-rtidt-14}.

\subsection{Statistic of persistent homology computed on a point cloud}

For many applications, in particular when the support of the point cloud is not drawn on or close to a geometric shape, persistence diagrams can be quite complex to analyze. In particular, many topological features are closed to the diagonal. Since they correspond to topological structures that die very soon after they appear in the filtration, these points are generally considered as noise, see Figure~\ref{fig:Protein4graphs} for an illustration. Confidence regions of persistence diagram are rigorous answers to the problem of distinguishing between signal and noise in these representations. 

The stability results given in Section~\ref{subsec:stab} motivate the use of the bottleneck distance to define confidence regions. However alternative distances in the spirit of Wasserstein distances can be proposed too. When estimating a persistence diagram  $\dgm$ with an estimator $\widehat  \dgm$, we typically look for some value $\eta_{\alpha}$ such that
\begin{equation*}
 \label{conf}
 P({\bottle(   \widehat  \dgm ,   \dgm)   \geq \eta_{\alpha} }) \leq  \alpha ,
 \end{equation*}
 for $\alpha \in (0,1)$.
Let $B_\alpha $ be the closed ball of radius $\alpha$ for the bottleneck distance and centered at $\widehat  \dgm $ in the space of persistence diagrams. 
Following~\cite{fasy2014confidence}, we can visualize the signatures of the points belonging to this ball in various ways. One first option is to center a box of side length $2\alpha$ at each point of the persistence diagram $\widehat  \dgm$. An alternative solution is to visualize the confidence set by adding a band at (vertical) distance $\eta_{\alpha}/2$ from the diagonal (the bottleneck distance being defined for the $\ell_\infty$ norm),  see Figure~\ref{fig:Protein4graphs} for an illustration. The points outside the band are then considered as significant topological features, see~\cite{fasy2014confidence} for more details.  
 
Several methods have been proposed in~\cite{fasy2014confidence} to estimate $ \eta_{\alpha}$ in different frameworks. These methods mainly rely from
stability results for persistence diagrams: confidence sets for diagrams can be derived from confidence sets in the sample space. 

\paragraph{Subsampling approach.} This method is based on a confidence region for the support $K$ of the distribution of the sample in Hausdorff distance. Let $\tilde{\X}_b$ be a subsample of size $b$ drawn from the sample $\tilde{\X}_n$ , where $b  = o(n / log n)$. Let  $q_b(1-\alpha)$ be the quantile  of of the distribution of $ \dhaus \left( \tilde{\X}_b, \X_n \right) $. Take $ \hat \eta_{\alpha}  := 2 \hat{q}_b(1-\alpha)$ where $\hat{q}_b$ is an estimation $q_b(1-\alpha)$ using  a standard Monte Carlo procedure. Under an $(a,b)$ standard assumption, and for $n$ large enough, \cite{fasy2014confidence}  show that 
\begin{eqnarray*}
P \left( \bottle \left( \dgm \left(  \Filt(K) \right)  ,  \dgm \left(  \Filt(\X_n) \right)  \right)  >
\hat \eta_\alpha   \right)
& \leq &
 P  \bigl(  \dhaus \left(K, \X _{n} \right)  > \hat \eta_\alpha \bigr) \\
&  \leq&  \alpha+ O \biggl( \frac{b }{n} \biggr)^{1/4}.
\end{eqnarray*}

\paragraph{Bottleneck Bootstrap.} The stability results often leads to conservative confidence sets. An alternative strategy is the bottleneck bootstrap introduced in \cite{cmm-rcrgi-15}.  We consider the general setting where a  persistence diagram $\widehat  \dgm$ is defined from the observation 
$(X_1, \ldots,  X_n)$ in a metric space. This persistence diagram corresponds to the estimation of an underlying persistence diagram $\dgm$, which can be related for instance to the support of the measure, or to the sublevel sets of a function related to this distribution (for instance a density function when the $X_i$'s are in $\R^d$). Let $(X_1^*,\dots,X_n^*)$ be a sample from the empirical measure defined from the observations $(X_1, \ldots,  X_n)$. Let also $\widehat  \dgm^*$ be the persistence diagram derived from this sample. We then can take for  $\eta_\alpha$    the quantity  $\hat \eta_\alpha$ defined by
\begin{equation}
P( \bottle ( \widehat  \dgm^*, \widehat  \dgm) > \hat \eta_\alpha \, |\, X_1,\ldots, X_n) = \alpha.
\end{equation}
Note that $\hat \eta_\alpha$ can be easily estimated with Monte Carlo procedures. 
It has been shown in~\cite{cmm-rcrgi-15} that the bottleneck bootstrap is valid when computing 
the sublevel sets of a density estimator. 

\paragraph{Bootstrapping Persistent Betti numbers.} As already mentioned,  confidence regions based on stability properties of persistence   may lead  to very conservative confidence regions. Based on the concepts of stabilizing statistics~\cite{penrose2001central},   asymptotic normality for persistent Betti numbers  has been shown recently by~\cite{krebs2019asymptotic,roycraft2020bootstrapping}, under very mild conditions on the filtration and the distribution of sample cloud. In addition, bootstrap procedures are also shown to be valid in this framework. More precisely, a smoothed bootstrap procedure together with a convenient rescaling of the point cloud seems to be a promising approach for boostrapping TDA features from point cloud data.

\subsection{Statistic for a family of persistent diagrams or other representations } \label{sec:stat-family}

Up to now in this section, we were only considering statistics based on one single observed  persistence diagram. We now consider a new framework where several persistence diagrams (or other representations) are available and we are interested in providing central tendency, confidence regions and hypothesis tests for topological descriptors built on this family.

\subsubsection{Central tendency for persistent homology}
\label{sec:submeth}


\paragraph{Mean and expectations of distributions of diagrams.}
The space of persistence diagrams being a general metric space but not an Hilbert space, the definition of a \emph{mean persistence diagram} is not obvious and unique. One first natural approach to define a central tendency in this context is to consider Fr\'echet means of distributions of diagrams. Their existence has been proved in \cite{mileyko2011probability} and they have also been characterized in \cite{turner2014frechet}. However they are may not be unique and they turn out to be difficult to compute in practice. 
To partly overcome these problems, different approaches have been recently proposed based on numerical optimal transport \cite{lacombe2018large} or linear representations and kernel based methods \cite{Divol_Chazal_2020}.  

\paragraph{Topological signatures from subsamples}
Central tendency properties of persistent homology can also be used  to compute topological signatures for very large data sets, as an alternative approach to overcome the prohibitive cost of persistence computations. 
Given a large point cloud, the idea is to extract many subsamples, to compute the persistence landscape for each subsample
and then to combine the information. 
 
For any positive integer $m$, let  $\Xm = \{ x_1,\cdots , x_m \}
 $ be a sample of $m$ points drawn from a measure $\mu$ in a metric space $M$ and which support is denoted $\mathbb{X}_\mu$. We assume that the diameter of  $\mathbb{X}_\mu$ is finite and upper bounded by $\frac T2$, where $T$ is  the same constant as in the definition of persistence landscapes in Section~\ref{sec:land}. For ease of exposition, we focus on the case $k=1$,
and set $\lscape (t) = \lscape (1,t)$.  However,
the results we present in this section hold for~$k > 1$. The corresponding persistence landscape (associated to the persistence diagram of the \v Cech or Rips-Vietoris filtration) 
is $\lambda_{\Xm}$
and we denote by $\Psi_\mu^{m}$ the measure induced by $\mu^{\otimes
m}$ on the space of persistence landscapes.
Note that the persistence landscape $\lambda_{\Xm}$  can
be seen as a single draw from the measure $\Psi_\mu^{m}$.
The point-wise expectations of the (random) persistence landscape
under this
measure is defined by
$\mathbb{E}_{\Psi_\mu^{m}}[\lambda_{\Xm}(t)], t \in [0,T]$.
The average landscape $\mathbb{E}_{\Psi_\mu^{m}}[\lambda_{X}]$ has a natural
empirical counterpart, which can be used as
its unbiased estimator. Let $S_1^m, \dots, S_\ell^m$ be $\ell$ independent samples of
size $m$ from~$\mu^{\otimes m}$. We define the empirical average landscape as
\begin{equation}
\label{eq:avLand}
\overline{\lambda_\ell^m}(t)= \frac{1}{b} \sum_{i=1}^b \lambda_{S_i^m}(t), \quad
\text{ for all } t \in [0,T],
\end{equation}
and propose to  use
$\overline{\lambda_\ell^m}$ to estimate $\lambda_{{\mathbb{X}_\mu}}$.
Note that computing the persistent homology of $\X_n$ is $O(\exp(n))$, whereas computing
the average landscape is $O(b \exp(m))$.

Another motivation for this subsampling approach is that it can be also applied  when
$\mu$ is a discrete measure with support $\X_N=\{x_1, \dots, x_N\}
 $ lying in a metric space $M$. This framework  can be very common in practice,  when a continuous (but unknown measure) is approximated by a discrete uniform measure $\mu_N$ on $\X_N$.

The average landscape $\mathbb{E}_{\Psi_\mu^{m}}[\lambda_{X}]$ is an interesting quantity on its own,
since it carries some stable topological information about the underlying
measure $\mu$, from which the data are generated.
 \begin{theorem}{\textnormal{[\cite{chazal2014subsampling}]}} \label{thm:stabW}
Let $\Xm \sim \mu^{\otimes m}$ and $\Ym \sim \nu^{\otimes m}$, where $\mu$ and $\nu$ are two probability measures on $M$.
For any $p \geq 1$ we have
\begin{equation*}
\left\Vert \mathbb{E}_{\Psi_\mu^m}[\lambda_\Xm] - \mathbb{E}_{\Psi_\nu^m}[\lambda_\Ym]
\right\Vert_\infty \leq  2 \, m^{\frac{1}{p}} W_{p}(\mu,\nu),
\end{equation*}
where $W_{p}$ is the $p$th Wasserstein distance on $M$.
\end{theorem}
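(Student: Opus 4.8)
The plan is to prove the bound by a \emph{coupling argument}, transferring the deterministic stability of persistence landscapes (Proposition~\ref{lemma:basic-prop}(ii) chained with Theorem~\ref{thm:pers-stab-data}) into an expectation bound via Jensen's inequality. First I would fix $\eps>0$ and choose a coupling $\pi$ of $\mu$ and $\nu$ on $M\times M$ that is $\eps$-optimal for $W_p$, i.e. $\left(\int_{M\times M}\rho(x,y)^p\,d\pi(x,y)\right)^{1/p}\le W_p(\mu,\nu)+\eps$. Drawing $m$ independent pairs $(x_i,y_i)\sim\pi$ produces a joint law on $M^m\times M^m$ whose marginals are exactly $\mu^{\otimes m}$ and $\nu^{\otimes m}$; writing $\Xm=(x_1,\dots,x_m)$ and $\Ym=(y_1,\dots,y_m)$, this realizes both samples on a common probability space while preserving the laws $\Psi_\mu^m$ and $\Psi_\nu^m$ of the associated landscapes. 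Since the pointwise expectations in the statement depend only on the marginal laws, it then suffices to control $\bigl|\mathbb{E}[\lambda_\Xm(t)-\lambda_\Ym(t)]\bigr|$ under this coupling.

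Next I would bound the integrand deterministically, for each fixed realization. The bijection $x_i\mapsto y_i$ is a correspondence between the finite metric spaces $\Xm$ and $\Ym$ whose distortion $\sup_{i,j}|\rho(x_i,x_j)-\rho(y_i,y_j)|$ is, by the triangle inequality, at most $2\max_i\rho(x_i,y_i)$; hence $\dgh(\Xm,\Ym)\le\max_i\rho(x_i,y_i)$. Chaining the one-Lipschitz property of the landscapes (Proposition~\ref{lemma:basic-prop}(ii)) with the Gromov--Hausdorff stability of diagrams (Theorem~\ref{thm:pers-stab-data}) gives, for every $t$ and every realization,
$$|\lambda_\Xm(t)-\lambda_\Ym(t)|\le \bottle\bigl(\dgm(\Filt(\Xm)),\dgm(\Filt(\Ym))\bigr)\le 2\,\dgh(\Xm,\Ym)\le 2\max_{i}\rho(x_i,y_i).$$
Taking expectations and using $|\mathbb{E}[\cdot]|\le\mathbb{E}|\cdot|$, together with the elementary inequality $\max_i\rho(x_i,y_i)\le\bigl(\sum_{i=1}^m\rho(x_i,y_i)^p\bigr)^{1/p}$, reduces matters to estimating the expectation of the $\ell^p$-sum.

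Finally, since $t\mapsto t^{1/p}$ is concave for $p\ge1$, Jensen's inequality and the i.i.d. structure of the pairs give
$$\mathbb{E}\Bigl[\bigl(\textstyle\sum_{i=1}^m\rho(x_i,y_i)^p\bigr)^{1/p}\Bigr]\le\Bigl(\sum_{i=1}^m\mathbb{E}\,\rho(x_i,y_i)^p\Bigr)^{1/p}=m^{1/p}\Bigl(\int\rho(x,y)^p\,d\pi\Bigr)^{1/p}\le m^{1/p}\bigl(W_p(\mu,\nu)+\eps\bigr).$$
Combining, I obtain $\bigl|\mathbb{E}_{\Psi_\mu^m}[\lambda_\Xm(t)]-\mathbb{E}_{\Psi_\nu^m}[\lambda_\Ym(t)]\bigr|\le 2m^{1/p}(W_p(\mu,\nu)+\eps)$ for every $t$; taking the supremum over $t$ and letting $\eps\to0$ yields the claim.

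I expect the main obstacle to be the first step: justifying that $\Xm$ and $\Ym$ may be realized jointly so that the \emph{pointwise} expectations $\mathbb{E}_{\Psi_\mu^m}[\lambda_\Xm(t)]$ and $\mathbb{E}_{\Psi_\nu^m}[\lambda_\Ym(t)]$ are both computed under marginals of a single coupling, together with the measurability and integrability of $t\mapsto\lambda_\Xm(t)$ needed to push the deterministic stability bound through the expectation. The remaining geometric and analytic steps---the distortion estimate for the correspondence, the $\max\le\ell^p$ bound, and the application of Jensen---are routine.
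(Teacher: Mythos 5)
Your proof is correct. The survey itself does not reproduce a proof of Theorem~\ref{thm:stabW} (it cites the original reference), but your argument --- realizing $\Xm$ and $\Ym$ through i.i.d.\ draws from a (near-)optimal coupling, bounding $|\lambda_{\Xm}(t)-\lambda_{\Ym}(t)|$ deterministically via Proposition~\ref{lemma:basic-prop}(ii) and Theorem~\ref{thm:pers-stab-data}, and then combining the bound $\max_i\rho(x_i,y_i)\leq\bigl(\sum_i\rho(x_i,y_i)^p\bigr)^{1/p}$ with Jensen's inequality --- is essentially the same proof as the one given in the cited paper.
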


The result of Theorem \ref{thm:stabW} is useful for two reasons. First, it tells
us that for a fixed $m$, the expected "topological behavior" of a set of $m$
points carries some stable information about the underlying measure from which
the data are generated. Second, it provides a lower bound for the Wasserstein
distance between two measures, based on the topological signature of samples of
$m$ points.

\subsubsection{Asymptotic normality}

As in the previous section, we consider several persistence diagrams (or other representations). The next step after giving central tendency descriptors of  persistence homology is to provide asymptotic normality results for these quantities together with bootstrap procedures to derive confidence regions.  It is of course easier to show such results for functional representations of persistence. In \cite{chazal2013bootstrap,chazal2015stochastic}, following this strategy  confidence bands for landscapes are proposed from the observation of  landscapes $\lscape_1,\dots,\lscape_N$ drawn i.i.d. from a random distribution in the space of landscapes. The  asymptotic validity as well as the uniform convergence of the multiplier bootstrap  is shown in this framework. Note that similar results  can be also proposed for many representations of persistence, in particular by showing that the corresponding functional spaces are Donsker spaces.

\subsection{Other statistical approaches to TDA}
Statistical approaches for \tda\ are knowing an increasing interest and many others have been proposed in the recent years or are still subject to active research activities, as illustrated in the following non-exhaustive list of examples. 

\paragraph{Hypothesis testing.} Several methods have been proposed for hypothesis testing procedures for persistent homology, mostly based on permutation strategies and for two sample testing. \cite{robinson2017hypothesis} focuses on pairwise distances of persistence diagrams whereas \cite{berry2020functional} study more general functional summaries.  Hypothesis tests based on kernel approaches have been proposed in \cite{kusano2019expectation}. A two-stage hypothesis test of filtering and testing for persistent images is also presented in\cite{moon2020hypothesis}. 

\paragraph{Persistence Homology Transform.} 
The representations introduced before  are all transformations derived from the persistence diagram computed from a fixed filtration built over a data set. 
The Persistence Homology Transform introduced in \cite{turner2014persistent,curry2018many} to study shapes in $\R^d$, takes a different path by looking at the persistence homology of the sublevel set filtration induced by the projection of the considered shape to each direction in $\R^d$.  It comes with several  interesting properties, in particular  the Persistence Homology Transform is  a sufficient statistic for distributions defined on the set of  geometric and finite  simplicial complexes embedded in $\R^d$.  

\paragraph{Bayesian statistics for TDA.} A Bayesian approach to persistence diagram inference has been proposed in \cite{maroulas2020bayesian} by viewing a persistence diagram  as a sample from a point process.  This Bayesian method   computes the point process posterior intensity based on a Gaussian mixture intensity for the prior.

\subsection{Persistent homology and machine learning}
\label{sec:PHandLearning}

Using \tda\ and more specifically persistent homology for machine learning is a subject that attracts a lot of information and generated an intense research activity. Although the recent progress in this area goes far beyond the scope of this paper, we briefly introduce the main research directions with a few reference to help the newcomer to the field to get started.  

\paragraph{\tda\ for exploratory data analysis and descriptive statistics.}
In some domains, \tda\ can be fruitfully used as a tool for exploratory analysis and visualization. For example, the Mapper algorithm provides a powerful approach to explore and vizualize the global topological structure of complex data sets. In some cases, 
persistence diagrams obtained from data can be directly interpreted and exploited for better understanding of the phenomena from which the data have been generated. This is, for example, the case in the study of force fields in granular media \citep{kramar2013persistence} or of atomic structures in glass \citep{0957-4484-26-30-304001} in material science, in the study of the evolution of convection patterns in fluid dynamics \citep{kramar2016analysis}, in machining monitoring \citep{khasawneh2016chatter} or in the analysis of nanoporous structures in chemistry \citep{lee2017quantifying} where topological features can be rather clearly related to specific geometric structures and patterns in the considered data.

\paragraph{Persistent homology for feature engineering.}
There are many other cases where persistence features cannot be easily or directly interpreted but present valuable information for further processing. However, the highly non linear nature of diagrams prevents them to be immediately used as standard features in machine learning algorithms.

Persistence landscapes and linear representations of persistence diagrams offer a first option to convert persistence diagrams into elements of a vector space that can be directly used as features in classical machine learning pipelines. 
This approach has been used, for example, for protein binding \citep{kovacev2016using}, object recognition \citep{loc-pbsr-14} or time-series analysis. In the same vein, the construction of kernels for persistence diagrams that preserve their stability properties has recently attracted some attention. Most of them have been obtained by considering diagrams as discrete measures in $\R^2$. Convolving a symmetrized (with respect to the diagonal) version of persistence diagrams with a 2D Gaussian distribution, \cite{reininghaus2015stable} introduce a multi-scale kernel and apply it to shape classification and texture recognition problems. Considering Wasserstein distance between projections of persistence diagrams on lines,  \cite{CarriereOudot2017ICML} build another kernel and test its performance on several benchmarks. Other kernels, still obtained by considering persistence diagrams as measures, have also been proposed in \cite{kusano2017kernel}.

Various other vector summaries of persistence diagrams have been proposed and then used as features for different problems. For example, basic summaries are considered in \cite{DBLP:conf/ctic/BonisOOC16} and combined with quantization and pooling methods to address non rigid shape analysis problems;  Betti curves  extracted from persistence diagrams are used with 1-dimensional Convolutional Neural Networks (CNN) to analyze time dependent data and recognize human activities from inertial sensors in \cite{umeda2017time,dindin2020topological}; \emph{persistence images} are introduced in \cite{adams2017persistence} and are considered to address some inverse problems using linear machine learning models in \cite{obayashi2017persistence}.

The above mentioned kernels and vector summaries of persistence diagrams are built independently of the considered data analysis or learning task. Moreover, it appears that in many cases the relevant topological information is not carried by the whole persistence diagrams but is concentrated in some localized regions that may not be obvious to identify. This usually makes the choice of a relevant kernel or vector summary very difficult for the user. To overcome this issue, various authors have proposed 
learning approaches that allows to learn the relevant topological features for a  given task. In this direction, \cite{hofer2017deep} proposes a deep learning approach to learn parameters of persistence images representations of persistence diagrams while \cite{kim2020pllay} introduce a neural network layer for persistence landscapes. In \cite{carriere2020perslay}, the authors introduce a general neural network layer for persistence diagrams that can be either used to learn an appropriate vectorization or directly integrated in a deep neural network architecture. Other methods, inspired from k-means, propose unsupervised methods to vectorize persistence diagrams \cite{royer2019atol,zielinski2018persistence}, some of them coming with theoretical guarantees \cite{chazal2020optimal}.

\paragraph{Persistent homology for machine learning architecture optimization and model selection.}
More recently, \tda\ has found new developments in machine learning where persistent homology is no longer used for feature engineering but as a tool to design, improve or select models - see, e.g. \cite{rieck2019neural, moor2020topological,pmlr-v97-hofer19a, chen2019topological, carlsson2020topological, gabrielsson2019exposition,ramamurthy2019topological}. Many of these tools rely on the introduction of loss or regularization functions depending on persistent homology features, raising the problem of their optimization. Building on the powerful tools provided by software libraries such as PyTorch or TensorFlow, practical methods allowing to encode and optimize a large family of persistence-based functions have been proposed and experimented \cite{bruel2019topology, poulenard2018topological}. A general framework for persistence-based function optimization based on stochastic subgradient descent algorithms with convergence guarantees has been recently proposed and implemented in a easy-to-use software tool \cite{carriere2020note}. With a different perspective, another theoretical framework to study the differentiable structure of functions of persistence diagrams has been proposed in \cite{leygonie2019framework}.

%

\section{\tda\ for data sciences with the GUDHI library} \label{sec:gudhi}

In this section we illustrate TDA methods with the  {Python library Gudhi\footnote{\url{http://gudhi.gforge.inria.fr/python/latest/}} }~\citep{maria2014gudhi} together with popular libraries as numpy \citep{walt2011numpy}, scikit-learn \citep{pedregosa2011scikit}, pandas \citep{mckinney2010data}. More illustrations with python notebooks can be found in the Tutorial Github page\footnote{https://github.com/GUDHI/TDA-tutorial} of Gudhi.

\subsection{Bootstrap and comparison of protein binding configurations}

This example is borrowed from \cite{kovacev2016using}. In this paper, persistent homology is used to analyze protein binding and more precisely it compares closed  and  open  forms  of  the  maltose-binding  protein (MBP), a  large  biomolecule  consisting  of  370  amino  acid  residues. The analysis is not based on geometric distances in $\mathbb R^3$ but on a metric of {\it dynamical distances}  defined by
$$D_{ij} = 1 - |C_{ij}|,$$
where C is the correlation matrices between residues.
The data can be download at this link\footnote{ \url{https://www.researchgate.net/publication/301543862_corr}}.
{\small
\begin{lstlisting}[language=Python]
import numpy as np
import gudhi as gd
import pandas as pd
import seaborn as sns

corr_protein = pd.read_csv("mypath/1anf.corr_1.txt", 
                           header=None,
                           delim_whitespace=True)
dist_protein_1 = 1- np.abs(corr_protein_1.values)
rips_complex_1= gd.RipsComplex(distance_matrix=dist_protein_1,
                                  max_edge_length=1.1)
simplex_tree_1 = rips_complex_1.create_simplex_tree(max_dimension=2)
diag_1 = simplex_tree_1.persistence()
gd.plot_persistence_diagram(diag_1)
\end{lstlisting}
}

\begin{figure}[h!] 
\centering 
\includegraphics[width = 1 \columnwidth]{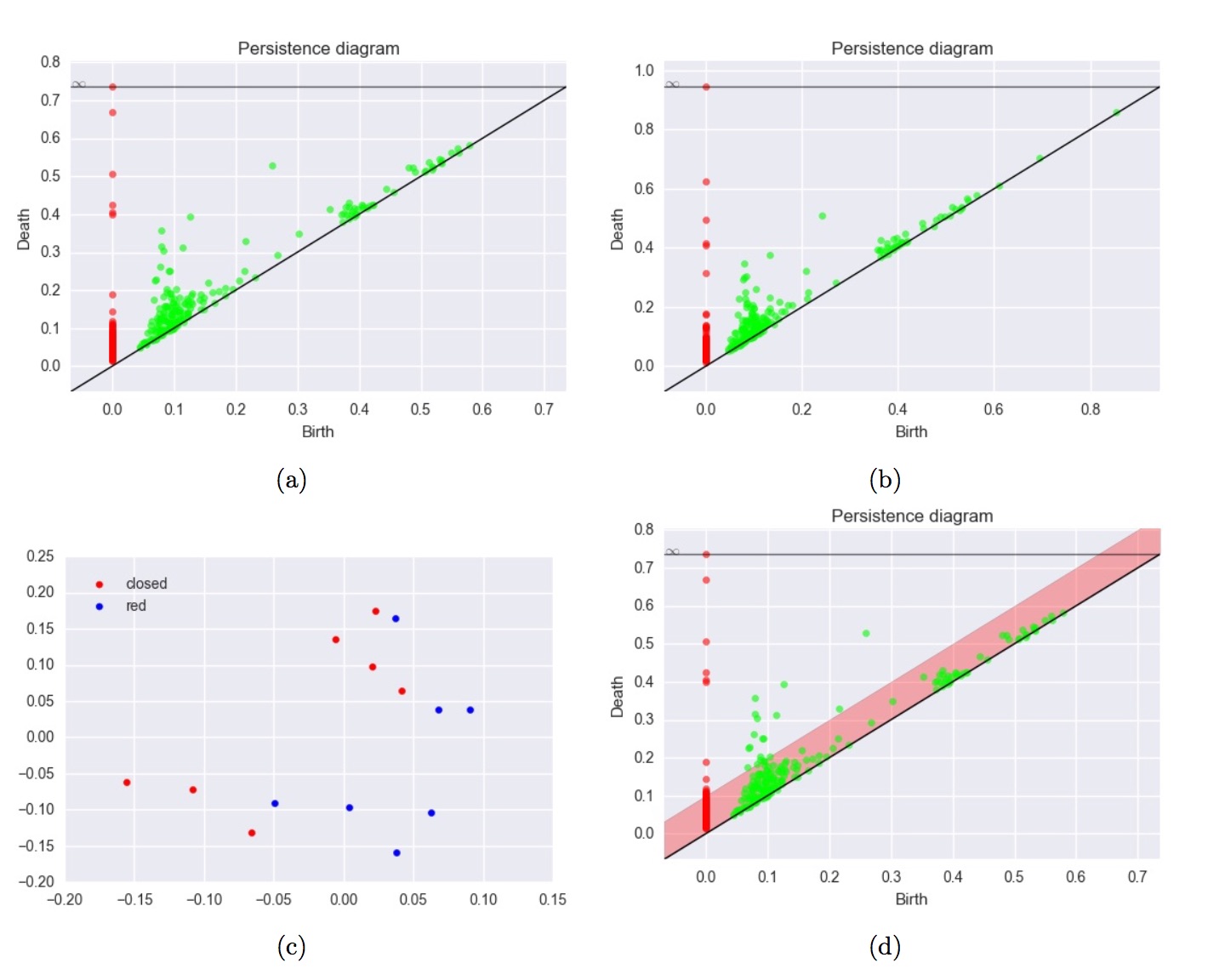}
\caption{(a) and (b) two persistence diagrams for  two configurations of MBP. (c) MDS configuration  for the matrix of bottleneck distances. (d) Persistence diagram and confidence region for the persistence diagram of  a MBP.}
\label{fig:Protein4graphs} 
\end{figure}

\noindent For comparing persistence diagrams, we use the bottleneck distance. The block  of statements given below  computes persistence intervals and computes the bottleneck distance for 0-homology and 1-homology:
{\small
\begin{lstlisting}[language=Python]
interv0_1 = simplex_tree_1.persistence_intervals_in_dimension(0)
interv0_2 = simplex_tree_2.persistence_intervals_in_dimension(0)
bot0  = gd.bottleneck_distance(interv0_1,interv0_2)

interv1_1 = simplex_tree_1.persistence_intervals_in_dimension(1)
interv1_2 = simplex_tree_2.persistence_intervals_in_dimension(1)
bot1  = gd.bottleneck_distance(interv1_1,interv1_2)
\end{lstlisting}
}
\noindent In this way, we can compute the matrix of bottleneck distances  between the fourteen  MPB. Finally, we apply a multidimensional scaling method to find a configuration in $\mathbb R^2$ which almost match with the bottleneck distances, see Figure~\ref{fig:Protein4graphs}(c). We use the scikit-learn library for the MDS:
{\small
\begin{lstlisting}[language=Python]
import matplotlib.pyplot as plt
from sklearn import manifold

mds = manifold.MDS(n_components=2, dissimilarity="precomputed")
config = mds.fit(M).embedding_

plt.scatter(config [0:7,0], config [0:7, 1], color='red', label="closed")
plt.scatter(config [7:l,0], config [7:l, 1], color='blue', label="red")
plt.legend(loc=1)
\end{lstlisting}
}

\medskip

We now define a    confidence band for a diagram using the bottleneck bootstrap approach. We resample over the lines (and columns) of the matrix of distances and we compute the bottleneck distance between the original persistence diagram and the bootstrapped persistence diagram. We repeat the procedure many times and  finally we estimate the quantile 95$\%$ of this collection of bottleneck distances. We take the value of the quantile to define a confidence band on the original diagram (see Figure~\ref{fig:Protein4graphs}(d)). However, such a procedure should be considered with caution because as far as we know the validity of the bottleneck bootstrap has not been proved in this framework.

\subsection{Classification for sensor data}

In this experiment, the 3d acceleration of 3 walkers (A, B and C) have been recorded from the sensor of a smart phone\footnote{The dataset can be download at this link \url{http://bertrand.michel.perso.math.cnrs.fr/Enseignements/TDA/data_acc}}. Persistence homology is not sensitive to the choice of axes and so no preprocessing is  necessary to align the 3 times series according to the same axis. From these three times series, we have picked at random sequences of 8 seconds in the complete time series,  that is 200 consecutive points of acceleration in $\R^3$. For each walker, we extract 100 time series in this way. 
The next block of statements computes the persistence for the alpha complex filtration for \verb+data_A_sample+, one of the 100 times series of acceleration of Walker A.
{\small
\begin{lstlisting}[language=Python]
alpha_complex_sample = gd.AlphaComplex(points = data_A_sample)
simplex_tree_sample = alpha_complex_sample.create_simplex_tree(max_alpha_square=0.3) 
diag_Alpha = simplex_tree_sample.persistence()
\end{lstlisting}
}
\noindent From \verb+diag_Alpha + we can then easily compute and plot the persistence landscapes, see Figure~\ref{fig:landscForest}(a). For all the 300 times series, we compute the persistence landscapes for dimension 0 and 1 and we compute the three first landscapes for the 2 dimensions. Moreover, each persistence landscape is discretized on 1000 points. Each time series is thus described by 6000 topological variables. To predict the walker from these features, we use a random forest \citep{breiman2001random}, which is known to be an efficient in such an high dimensional setting. We split the data into train and test samples at random several times. We finally obtain an averaged classification error around 0.95. We can also visualize the most important variables in the Random Forest, see Figure~\ref{fig:landscForest}(b).

\begin{figure}[h] 
\centering 
\includegraphics[width = 1 \columnwidth]{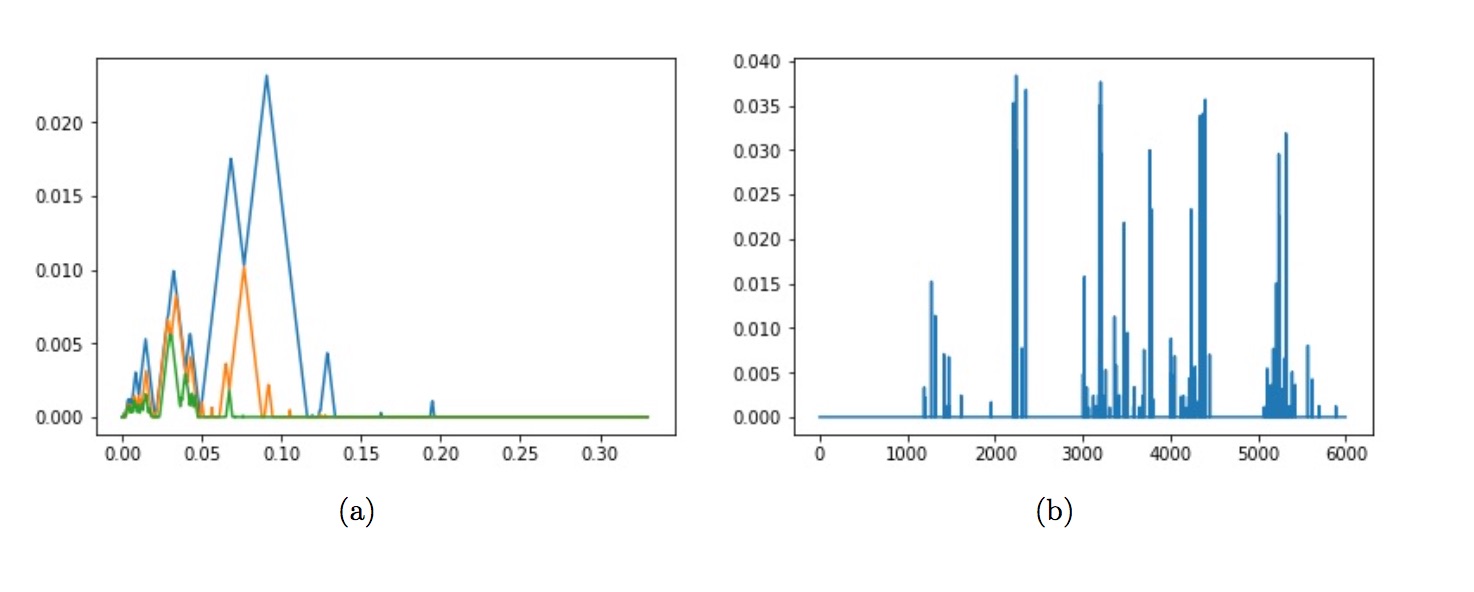}
\caption{(a)The three first landscapes for 0-homology of the alpha shape filtration defined for a time series of acceleration of Walker A. (b) Variable importances of the landscapes coefficients for the classification of Walkers. The 3 000 first coefficients correspond to the three landscapes of dimension 0 and the 3 000 last coefficients to the three landscapes of dimension 1. There are 1000 coefficients per landscape. Note that the first landscape of dimension 0 is always the same using the Rips complex (a trivial landscape)  and consequently the corresponding coefficients have a zero importance value.}
\label{fig:landscForest}
\end{figure}

%
%
%
%

\paragraph{Acknowledgements}
This work was partly supported by the French ANR chair in Artificial Intelligence TopAI. 
We thank the authors of \cite{kovacev2016using} for making their data available.

\bibliographystyle{apalike}
\bibliography{refTDA}
\end{document}